\def\r{\mathcal{R}}
\def\rr{\rho}
\def\a{\alpha}
\def\E{\mathcal{E}}
\def\p{\Phi}
\def\v{\varphi}
\def\ep{\epsilon}
\def\l{\Lambda}
\def\o{\omega}
\def\R{\mathbb{R}}
\def\c{\mathcal{C}}
\def\C{\mathbb{C}}
\def\Z{\mathbb{Z}}
\def\P{\mathbb{P}}
\def\p{\mathcal{P}}
\def\F{\mathcal{F}}
\def\J{\mathcal{J}}
\def\I{\mathcal{I}}
\def\s{\mathbf{s}}
\def\S{\mathcal{S}}
\def\H{\mathcal{H}}
\def\T{\textsf{T}}
\def\A{\mathcal{A}}
\def\B{\mathcal{B}}
\def\LL{\mathcal{L}}
\def\TT{\mathbb{T}}
\def\N{\mathbb{N}}
\def\f{\mathcal{F}}
\def\W{\mathcal{W}}
\def\Z{\mathbb{Z}}
\def\beq{\begin{equation}}
\def\eeq{\end{equation}}
\def\beq{\begin{equation}}
\def\eeq{\end{equation}}
\newtheorem{thm}{Theorem}
\newtheorem{prop}[thm]{Proposition}
\newtheorem{claim}[thm]{Claim}
\newtheorem{defn}[thm]{Definition}
\newtheorem{nt}[thm]{Notation}
\newtheorem{cor}[thm]{Corollary}
\newtheorem{obs}[thm]{Observation}
\newtheorem{d0}[thm]{Definition}
\newtheorem{o0}[thm]{Observation}
\newtheorem{c0}[thm]{Corollary}
\newtheorem{t1}[thm]{Theorem}
\newtheorem{l1}[thm]{Lemma}
\newtheorem{p1}[thm]{Proposition}
\begin{document}
\title[]{The pointwise convergence of Fourier Series (II)\\
Strong $L^1$ case for the lacunary Carleson operator}

\author{Victor Lie}

\date{\today}
\address{Department of Mathematics, Purdue, IN 46907 USA}

\email{vlie@math.purdue.edu}

\address{Institute of Mathematics of the
Romanian Academy, Bucharest, RO 70700, P.O. Box 1-764, Romania.}

\thanks{The author was supported by the National Science Foundation under Grant No. DMS-1500958. Parts of the current paper were extended and finalized while the author was in residence at the Mathematical Sciences Research Institute in Berkeley, California, during the Spring 2017 semester. This paper was submitted in October 2017 and accepted for publication in \emph{Advances in Mathematics} in February 2019.}

\keywords{Time-frequency analysis, Carleson's Theorem, lacunary subsequences, pointwise convergence.}

\maketitle

\begin{abstract}
We prove that the lacunary Carleson operator is bounded from $L \log L$ to $L^{1}$. This
result is sharp.

The proof is based on two newly introduced concepts:
\begin{enumerate}
\item the \emph{time-frequency regularization of a measurable set} and

\item the \emph{set-resolution of the time-frequency plane at $0-$frequency}.
\end{enumerate}

These two concepts  will play the central role in providing a special tile decomposition adapted to the interaction between the \emph{structure} of the lacunary Carleson operator and the corresponding  \emph{structure} of a fix measurable set.

Another key insight of our paper is that it provides for the first time a simultaneous treatment of families of tiles with \emph{distinct} mass parameters. This should be regarded as a fundamental feature/difficulty of the problem of the pointwise convergence of Fourier Series near $L^1$, context in which, unlike the standard $L^p,\:p>1$ case, \emph{no decay} in the mass parameter is possible.
\end{abstract}
$\newline$

\section{\bf Introduction}\label{intro}

In this paper we continue the author's dedicated work in investigating one of the central and oldest themes in the area of harmonic analysis that of the almost everywhere convergence of the Fourier Series.

The main result of our paper is the following:
$\newline$

\noindent\textbf{Main Theorem.}
 \emph{Let $\{n_j\}_{j\in\N}\subset\N$ be a lacunary sequence.\footnote{Recall here that a sequence $\{n_j\}_{j\in\N}\subset\N$ is called \textit{lacunary} iff $\underline{\lim}_{j\:\rightarrow\:\infty}\: \frac{n_{j+1}}{n_j}>1$.} Define the lacunary Carleson operator associated with $\{n_j\}_{j\in\N}\subset\N$ as
$$C_{lac}^{\{n_j\}_j}:\,C^{\infty}(\TT)\,\mapsto\,L^{\infty}(\TT)$$ with\footnote{Throughout this paper, for notational simplicity, we ignore the \emph{p.v.} symbol.}
\beq\label{carlacc}
C_{lac}^{\{n_j\}_j}f(x):=\sup_{j\in\N}\left|\int_{\TT}e^{2\pi\,i\,n_j\,(x-y)}\,\cot(\pi\,(x-y))\,f(y)\,dy\right|\:.
\eeq
Then the following holds:
\beq\label{carlblac}
\|C_{lac}^{\{n_j\}_j}f\|_{1}\leq C\,\|f\|_{L\log L}\,,
\eeq
where here $C=C(\{n_j\}_j)>0$ is constant independent of $f$.
$\newline$
\indent Moreover, this result is sharp.}
$\newline$

The proof of this result relies on the following key relation\footnote{For a more detailed account of this, please see the statement of the Main Theorem B at the end of this section.}:
\beq\label{carlac1aaa}
\|\chi_{F}\,C_{lac}^{*}(g)\|_{L^1}\leq C_1\,|F|\,\log\frac{4}{|F|}\,\|g\|_{\infty}\:,
\eeq
for any $F\subseteq\TT$ measurable and any function $g\in L^{\infty}(\TT)$. Here $C_{lac}^{*}$ stands for the adjoint\footnote{This should be understood as the adjoint of the \emph{linearized} Carleson operator - see \eqref{carlac11}, where the linearized function $N(\cdot)$ is regarded as an arbitrary measurable function independent of the input function.} lacunary Carleson operator and $C_1>0$ is an absolute constant.

In order to show \eqref{carlac1aaa} and thus prove the central part of our Main Theorem, we rely on three new insights within the generic problem of the pointwise convergence of the Fourier Series near $L^1$:
\begin{itemize}

\item we introduce the concept of the \emph{time-frequency regularization (\textbf{TFR}) of a set \footnote{Throughout the paper, all the sets consisting of real numbers are assumed to be measurable.}} - this is based on an algorithm designed to identify structures within the set $F$ in \eqref{carlac1aaa} and relies on embedding the level sets of the Hardy-Littlewood maximal operator applied to $F$ into a union of subsets having what we call ``uniform" structure, that translates, in particular, into the fact that each such subset can be represented as a union of same-length disjoint intervals that roughly have similar $F-$density.\footnote{If $I$ is a given interval one should think at its $F-$density (or mass) as given by $\approx \frac{|I\cap F|}{|I|}$.}

\item relying on the first item, we further introduce the fundamental concept of the \emph{set-resolution of the time-frequency plane (at $0$-frequency)} - this connects the structure of the lacunary Carleson operator, more precisely its corresponding tile decomposition, with the structure of the set $F$. Our entire proof is indissolubly tied to this type of structure-analysis/compatibility.

\item in the final/main part of our proof we develop a first approach to the \emph{simultaneous treatment} of the tile families with \emph{distinct mass-parameter} that further relies on the notion of tree foliation of the time-frequency plane introduced in Section \ref{foltrees1}. This approach to the mass-analysis of our tiles is a key aspect with which one must deal in order to get sharp results on the pointwise convergence of Fourier Series near $L^1$. Indeed, the independent treatment of the tile families having distinct mass parameters is only possible when one can get decay in the mass parameter, situation that is bound to the $L^p-$case with $p>1$.
\end{itemize}

In a nutshell, this work aims to reveal some subtle connections between the structure of the underlying set $F$ and the corresponding structure of the frequencies of the tiles appearing in the time-frequency decomposition of the (lacunary) Carleson operator with the latter further reflected in the properties of the linearizing measurable function $N$ in \eqref{carll0}. Based on our insight and current progress, we modify the classical $L\log L$-conjecture on the pointwise convergence of the Fourier Series near $L^1$ - see Section 1.2. - into two distinct, sharper\footnote{Either of these conjectures imply the $L\log L$ conjecture.} conjectures that address separately the $L^{1,\infty}$-bound and the $L^{1}$-bound of the Carleson operator - for this please see Conjecture 1 and Conjecture 2, respectively. One should regard our main result in the present paper as a support and indication for Conjecture 2.\footnote{For the same reasons as those revealed in the present paper, if true, Conjecture 2 is automatically sharp.}

Finally, we reiterate our belief that any significant progress on either Conjecture 1 or 2 must eventually involve additive combinatoric techniques together with a very careful analysis of the structural properties of the set $F$ relative to the structure of the set of frequencies of the tiles in the decomposition of the Carlson operator. This will very likely rely on and develop some of the concepts introduced in this paper and listed in the above itemization. We plan to investigate these directions in our future work.

\subsection{Historical background}

The history of this problem was initiated  by J. Fourier, who in his study on the heat propagation (\cite{Fou}), had the idea of representing a (suitable) function as a (possibly infinite) superposition of sinuses and cosines at distinct frequencies.

The question of when such (pointwise) representations make sense is naturally related with the regularity properties of the  function that is represented and was one of the main themes of research within the mathematical analysis during the 19th century and early 20th century. After successive investigations of key leading figures in mathematics such as Dirichlet, Cauchy and Riemann it came as a complete surprise when Du Bois Reymond (\cite{Bois}) showed that there are continuous periodic functions that do not admit \textit{everywhere} such a representation - that is, there are continuous functions that do not agree everywhere with their Fourier series representation.\footnote{Moreover, there exists a continuous function for which its Fourier Series is divergent at a given point (and in fact at any rational point).} After H. Lebesgue (\cite{Leb}) developed his theory of integration introducing the formalism about sets of (Lebesgue) ``measure" zero, N. Luzin (\cite{Lu}) was the first one to formulate a``reasonable" conjecture, that is: for every square-integrable function $f\in L^2(\TT)$ the correspondent Fourier Series converges Lebesgue almost everywhere to $f$. Surprisingly, his student A. Kolmogorov (\cite{Kol1}) was able to show in 1922 that there exists an $L^1(\TT)-$function whose corresponding Fourier series diverges (almost) everywhere.

From this point on the general belief was that Luzin's conjecture must be false. After more than forty years of misconceptions, L. Carleson (\cite{c1}) proved that in fact Luzin's conjecture is true. Shortly afterwards, R. Hunt (\cite{hu}) extended Carleson's result to functions $f\in L^p(\TT)$ with $1<p<\infty$. In this context, we mention that after Carleson's result two more proofs of Luzin's conjecture were provided: one due to C. Fefferman (\cite{f}) that became a landmark in the newly developing area of time-frequency analysis, and much later, another one due to M. Lacey and C. Thiele (\cite{lt3}).

\subsection{The Carleson operator: formulation of the main question; the two fundamental conjectures}

Having in mind this evolution of the topic, one naturally reaches the following general question (formally stated in \cite{lvPCFS1}):
$\newline$

\noindent\textbf{Main Question (Heuristic).} \textit{What is the behavior of the (almost everywhere) pointwise convergence of the Fourier Series between the two known cases for the Lebesgue-scale spaces $L^p(\TT)$:
\begin{itemize}
\item $p=1$, divergence of the Fourier Series (Kolmogorov)
\item $p>1$, convergence of the Fourier Series (Carleson-Hunt) ?
\end{itemize}}

Following \cite{lvPCFS1}, we now introduce

\begin{d0}\label{Cprop0} Let $Y$ be a r.i. (quasi-)Banach space. We say that
$Y$ is a $\mathcal{C}-$space iff $\:\exists\:\:C_0=C_0(Y)>0$ such that the Carleson operator defined by $$C:\,C^{\infty}(\TT)\,\mapsto\,L^{\infty}(\TT)$$ with
\beq\label{carll0}
Cf(x):=\sup_{N\in\N}\left|\int_{\TT}e^{2\pi\,i\,N\,(x-y)}\,\cot(\pi\,(x-y))\,f(y)\,dy\right|\:,
\eeq
obeys the relation\footnote{Recall that the weak-$L^1$ quasinorm is given by $\|f\|_{1,\infty}:
=\sup_{\lambda>0} \lambda\,|\{x\,|\,|f(x)|>\lambda\}|$.}
\beq\label{carlb}
\|Cf\|_{1,\infty}\leq C_0\,\|f\|_{Y}\:\:\:\:\:\forall\:\:f\in Y\:.
\eeq
\end{d0}
$\newline$
 With this definition, the Main Question above can be reformulated as follows:
$\newline$

\noindent\textbf{Main Question (Formal)} \textit{Give a satisfactory description of the Lorentz (or more general r.i. quasi-Banach) spaces $Y\subseteq L^1(\TT)$ that are also $\mathcal{C}-$spaces. If such exists, describe the maximal Lorentz $\mathcal{C}-$space $Y_0$.}
$\newline$

Once at this point we mention the classical conjecture on the near-$L^1$ behavior/convergence of Fourier Series that goes back to Carleson's work in \cite{c1}:
$\newline$

\noindent\textbf{The classical $L\log L$ Conjecture.}

\textit{The Lorentz space $L\log L$ is a $\mathcal{C}-$space.
Equivalently, we have that there exists $c>0$ absolute constant such that
\beq\label{largestt1}
\|Cf\|_{1,\infty}\leq c\,\|f\|_{L\log L}\:\:\:\:\:\:\:\:\:\forall\:f\in L\log L\,.
\eeq}
$\newline$

Returning to the formal enounce of the Main Question, we are now formulating the two fundamental conjectures that we believe are characterizing the behavior of the Carleson operator. These should be regarded as refinements of the above classical conjecture.

The first conjecture, regards \textit{per se} the problem of the largest Lorentz space where one has the almost everywhere convergence of the Fourier Series:
$\newline$

\noindent\textbf{Conjecture 1.} [\textsf{The $L^{1,\infty}$-behavior of the Carleson operator}]

\textit{The largest Lorentz space $Y_0\subseteq L^1(\TT)$ that is also a $\mathcal{C}-$space is given by
\beq\label{largest}
Y_0=L\sqrt{\log L}\,.
\eeq
In particular we have that
\beq\label{largest1}
C:\,L\sqrt{\log L}\,\mapsto\,L^{1,\infty}(\TT)\,.
\eeq}
$\newline$

\noindent\textsf{Heuristic Conjecture 1:} The elements that hint toward the formulation of this conjecture rely on the nature of the cancelations appearing among the tiles that belong to a multi-tower (see Section 4 in \cite{lvPCFS1} for the definition). Due to space limitation we will not provide here more details on this topic, but instead refer our reader to a result of T. Tao and J. Wright (\cite{TW}) that can serve as a very basic/rudimentary model for understanding the behavior of the Carleson operator restricted to a tower: if $T$ is a convolution operator arising from a Marcinkiewicz multiplier then $T:\:L\sqrt{\log L}\:\rightarrow\:L^{1,\infty}$ sharply.
\medskip

In contrast with the above, one may also ask about the \textit{strong} bounds for the Carleson operator. More precisely,
by a symmetric reasoning with the one in the formulation of the Main Question above, we can ask: ``\textit{what is the largest Lorentz space $Y_1\subseteq L^1(\TT)$ such that
\beq\label{largestL1}
C:\,Y_1\,\mapsto\,L^{1}(\TT)?
\eeq}

\noindent\textsf{Heuristic Conjecture 2:} The Carleson operator encompasses both the behavior of the Hardy-Littlewood maximal operator and that of the Hilbert transform. However, both these operators map \textit{sharply} $L\log L$ into $L^1$. As a consequence, one naturally arises to the following
$\newline$

\noindent\textbf{Conjecture 2.} [\textsf{The $L^{1}$-behavior of the Carleson operator}]

\textit{The largest Lorentz space $Y_1\subseteq L^1(\TT)$ that obeys \eqref{largestL1}
is given by
\beq\label{largestl1}
Y_1=L\log L\,.
\eeq}
$\newline$

Once at this point, let us briefly comment on the up to date progress on the above conjectures:

\begin{itemize}
\item regarding the $L^{1,\infty}-$behavior of the Carleson operator:
\begin{itemize}
\item on the \textit{positive} side, the best known result belongs to N. Antonov (\cite{An}) who showed that $Y=L\log L\log \log\log L$ is a Lorentz $\mathcal{C}-$space. A bit later, Arias-de-Reyna (\cite{Ar}) proved that $Y$ can be enlarged to a rearrangement invariant quasi-Banach space - denoted $QA$ - that strictly contains $L \log L \log\log\log L$. However, it is worth saying that this result is only apparently stronger, since, in \cite{MMR}, the authors show that under mild conditions on the fundamental function $\v$ the correspondent\footnote{ Throughout the paper, if $\v:\,\TT\,\rightarrow\,[0,\infty)$ is a function with the following properties: $\v(0)=0$ with $\v$ nondecreasing and concave we define the \textit{Lorentz space} $\l_{\v}$ as the r.i. Banach space of all the measurable functions $f\in L^{0}(\TT)$ such that $\|f\|_{\l_{\v}}:=\int_{0}^{1} f^{*}(s)\,d(\v(s)) <\infty$. The function $\v$ becomes now the fundamental function of $\l_{\v}$.} largest Lorentz space $\Lambda_{\v}$
contained in $QA$ is precisely given by Antonov's space $\Lambda_{\v}=L\log L\log\log\log L$.

Previous near-$L^1$ results were obtained by Sj\"olin (\cite{sj3}) and F. Soria (\cite{So1}, \cite{So2}).

\item on the \textit{negative} side,  Konyagin (\cite{koDivf}, \cite{koDivff}) proved that if $\phi(u)=o(u\sqrt{\frac{\log u}{\log\log u}})$ as $u\rightarrow \infty$ then the space $\phi(L)=\Lambda_{\bar{\phi}}$ is not a Lorentz $\mathcal{C}-$space, where here $\bar{\phi}(t):=\int_{0}^{t} s\,\phi(\frac{1}{s})\,ds$.

    Previous negative results were obtained by Chen (\cite{ch}), Prohorenko (\cite{P}) and K\"orner (\cite{Kor}).
\end{itemize}
\item regarding the $L^{1}-$behavior of the Carleson operator:
\begin{itemize}
\item the best and only known results are due to  Sj\"olin (\cite{sj3}) for the Walsh-Fourier case and to the author (\cite{lvCarl1}) for the case of Fourier Series. These results amount to saying that \eqref{largestL1} holds for
$Y_1=L (\log L)^2$.
\end{itemize}
\end{itemize}

We end the commentary on the behavior of the full Carleson operator by mentioning that all the (best) positive results mentioned above were based on extrapolation techniques. Few years ago, by using an approach that relied entirely on time-frequency methods, (\cite{lvCarl1}), the author re-proved all these best known positive results by a unified approach.

\subsection{The model problem - the lacunary Carleson operator: formulation of the main question; the two fundamental conjectures - analogues}

Given the deepness and difficulty of these questions it is natural to search for relevant \textit{model} problems. One of the most natural and prominent such models was born in the early 20th century once that the Littlewood-Paley theory of block-dyadic summation of the Fourier Series developed. In such a context, appeared naturally the question about the pointwise convergence of the partial Fourier Sums along lacunary subsequences.

Now, by analogy with our previous section presentation, we first introduce the following definition (initially developed in \cite{lvPCFS1})

\begin{d0}\label{Cprop} Let $Z$ be a r.i. (quasi-)Banach space.

\noindent i)  Assume $\{n_j\}_{j\in\N}\subset\N$ is a lacunary sequence. We say that
$Z$ is a $\mathcal{C}_{L}^{\{n_j\}_j}-$space iff $\:\exists\:C_1=C_1(Z,\,\{n_j\}_j)>0$ such that the $\{n_j\}_{j\in\N}$ - lacunary Carleson operator defined by
$$C_{lac}^{\{n_j\}_j}:\,C^{\infty}(\TT)\,\mapsto\,L^{\infty}(\TT)$$ with
\beq\label{carlac}
C_{lac}^{\{n_j\}_j}f(x):=\sup_{j\in\N}\left|\int_{\TT}e^{2\pi\,i\,n_j\,(x-y)}\,\cot(\pi\,(x-y))\,f(y)\,dy\right|\:,
\eeq
obeys the relation
\beq\label{carlblac}
\|C_{lac}^{\{n_j\}_j}f\|_{1,\infty}\leq C_1\,\|f\|_{Z}\:\:\:\:\:\forall\:\:f\in Z\:.
\eeq

\noindent ii) We say that $Z$ is a $\mathcal{C}_L-$space iff $Z$ is a $\mathcal{C}_{L}^{\{n_j\}_j}-$space for any lacunary sequence $\{n_j\}_{j\in\N}$.

Moreover, trough out the paper, if $Z$ is a $\mathcal{C}_L-$space, we will (often) express this as\footnote{Given $A,\,B>0$, throughout the paper,  we will use the notations $A\lesssim B$ and $B\gtrsim A$ to specify that there exists $C>0$ such that $A\leq C\,B$ and $B\leq C\, A$ respectively.}

\beq\label{carlblacuni}
\|C_{lac}f\|_{1,\infty}\lesssim \|f\|_{Z}\:\:\:\:\:\forall\:\:f\in Z\:,
\eeq
where here $C_{lac}$ stands for ``the generic" lacunary Carleson operator. \footnote{In \eqref{carlblacuni}, the implicit constant is allowed to depend on the specific choice of the lacunary sequence and on the space $Z$ but not on the function $f\in Z$.}
\end{d0}

We can now formulate the analogue of the Main Question raised in the previous section:
$\newline$

\noindent\textbf{Main Model Question (Formal).}
\textit{Give a satisfactory description of the Lorentz (or more general r.i. quasi-Banach) spaces $Z$ that are also $\mathcal{C}_L-$spaces. If such exists, describe the maximal Lorentz $\mathcal{C}_L-$space $Z_0$.}
$\newline$

Following now line by line the analogy with the full Carleson operator case, one can formulate the analogues of the fundamental Conjectures 1 and 2. Thus, in the context of the largest Lorentz space for which one has the almost everywhere convergence for any lacunary subsequence we have:
$\newline$

\noindent\textbf{Model Conjecture $1$.} [\textsf{The $L^{1,\infty}$-behavior of the lacunary Carleson operator}]
(S. Konyagin, ICM, Madrid 2006, \cite{ko1}.)

\textit{The largest Lorentz space $Z_0\subseteq L^1(\TT)$ that is also a $\mathcal{C}_{L}-$space is given by
\beq\label{largestlac}
Z_0=L\log\log L\,.
\eeq}
$\newline$

\noindent\textsf{Heuristic - Model Conjecture 1}\footnote{In this very succinct heuristic we will make use of the notations and definitions introduced in Sections \ref{discr1} and \ref{tilediscr1}.}:  The first important observation is that if $\{n_j\}_{j\in\N}$ is a lacunary sequence then the corresponding characters $\{e^{2\pi i n_j \cdot}\}_{j}$ behave similarly to a sequence of i.i.d random variables. As a consequence one has the ``trigonometric analogue" of Hincin's inequality under the name of Zygmund's inequality\footnote{In the Appendix we will provide a nice short proof of Zygmund's inequality based on the two newly introduced concepts of the time-frequency regularization of a set and of the set resolution of the time-frequency plane at a fix frequency.}: if $\{a_j\}_{j\in\N}\in l^2(\N)$, $\{n_j\}_{j\in\N}$ lacunary, and $F\subseteq\T$ measurable then
\beq\label{zygg}
\|\sum_{j=1}^{\infty} a_j\,e^{2\pi i n_j x}\|_{L^1(F)}\lesssim |F|\,(\log \frac{4}{|F|})^{\frac{1}{2}}\,\|\{a_j\}_{j}\|_{l^2(\N)}\:.
\eeq
Fixing now $n,\,N\in\N$ and assuming $|F|\approx 2^{-N}$, one has from \eqref{zygg}
$$\|\sum_{j=1}^{2^n} a_j\,e^{2\pi i n_j x}\|_{L^1(F)}\lesssim |F|\, N^{\frac{1}{2}}\,(\sum_{j=1}^{2^n}|a_j|^2)^{\frac{1}{2}}\:,$$
while a trivial $L^1$-summation argument gives
$$\|\sum_{j=1}^{2^n} a_j\,e^{2\pi i n_j x}\|_{L^1(F)}\lesssim |F|\,(\sum_{j=1}^{2^n}|a_j|)\:.$$
From this we deduce that if $|a_j|\approx 2^{-n}$ for $1\leq j\leq 2^n$, then
$$\|\sum_{j=1}^{2^n} a_j\,e^{2\pi i n_j x}\|_{L^1(F)}\lesssim |F|\,\min\{1,\,N^{\frac{1}{2}}\,2^{-\frac{n}{2}}\}\:.$$
Conclude that if we were in an ideal setting with all the tiles $P$ appearing in the time-frequency portrait of the lacunary Carleson operator $C_{lac}\equiv T$ having uniform $F-$mass, then
$$\|\sum_{n}\sum_{P\in\P_{n}} T_{P}^{*}g\|_{L^1(F)}\lesssim |F|\,\sum_{n}\min\{1,\,N^{\frac{1}{2}}\,2^{-\frac{n}{2}}\}\|g\|_{\infty}$$
$$\lesssim |F|\,\log\log\frac{4}{|F|}\,\|g\|_{\infty}\:,$$
which would justify at least the hope for the restricted weak-type form of the Model Conjecture 1, that is
\beq\label{restr}
\|C_{lac}(h\,\chi_F)\|_{1,\infty}\lesssim |F|\,\log\log\frac{4}{|F|}\,\|h\|_{\infty}\:.
\eeq

\medskip
Turning now towards the strong $L^1$ bounds for the lacunary Carleson operator, by analogy with \eqref{largestL1}, one
can ask: ``\textit{what is the largest Lorentz space $Z_1\subseteq L^1(\TT)$ such that
\beq\label{largestL1lac}
C_{lac}:\,Z_1\,\mapsto\,L^{1}(\TT)?
\eeq}

\noindent\textsf{Heuristic - Model Conjecture 2}: Notice that the same arguments served as a motivation for the full Carleson operator remain valid in the current setting: indeed, the lacunary Carleson operator $C_{lac}$ can also be thought of as subsuming the behavior of both the Hardy-Littewood maximal operator and that of the Hilbert transform.
\medskip

With these, we have
$\newline$

\noindent\textbf{Model Conjecture $2$.} [\textsf{The $L^{1}$-behavior of the lacunary Carleson operator}]

\textit{The largest Lorentz space $Z_1\subseteq L^1(\TT)$ that obeys \eqref{largestL1lac}
is given by
\beq\label{largestl1lac}
Z_1=L\log L\,.
\eeq}
$\newline$

\subsection{Resolution of the model problem and of the corresponding conjectures. Main results.}

Once at this point, we have the following remarkable fact: we can fully answer both of the above model conjectures, and moreover, provide the answer to the Main Model Question, thus completely solving the lacunary Carleson operator case.

\subsubsection{Resolution of the Main Model Question and Model Conjecture $1$}

From author's previous work we first have:
$\newline$

\noindent\textbf{Answer Main Model Question (\cite{lvKony1}, \cite{lvPCFS1}).}:

\textit{Define $\v_0:\:[0,1]\,\rightarrow\,\R_{+}$ as $\v_0(s):=s\,\log\log\frac{17}{s}\,\log\log\log\log\frac{17}{s}$.}

\textit{Let now $\v:\:[0,1]\,\rightarrow\,\R_{+}$ be a non-decreasing concave function with $\v(0)=0$. Then we have:}

i) \textit{If $\:\:\:\underline{\lim}_{{s\rightarrow 0}\atop{s>0}}\:\frac{\v(s)}{\v_0(s)}>0\:\:\:$ then the Lorentz space $\l_{\v}$ is
a $\mathcal{C}_L-$space.}

ii) \textit{If $\:\:\:\overline{\lim}_{{s\rightarrow 0}\atop{s>0}}\:\frac{\v(s)}{\v_0(s)}=0\:\:\:$ then the Lorentz space $\l_{\v}$ is
not a $\mathcal{C}_L-$space.}

iii) \textit{If $\:\:\:\underline{\lim}_{{s\rightarrow 0}\atop{s>0}}\:\frac{\v(s)}{\v_0(s)}=0<\overline{\lim}_{{s\rightarrow 0}\atop{s>0}}\:\frac{\v(s)}{\v_0(s)}\:\:\:$ then both scenarios are possible. More precisely, one can choose a $\v$ such that
$\l_{\v}$ is a $\mathcal{C}_L-$space while for another proper choice of $\v$ one has that $\l_{\v}$ is not a $\mathcal{C}_L-$space.
$\newline$
\indent Moreover, letting $\W$ be the quasi-Banach space defined by\footnote{Throughout the paper we will use the following convention: $\log k$ stands for $\log_{2} k$.}:
$$\W:=\{f:\:\TT\mapsto C\,|\,f\:\textrm{measurable},\:\|f\|_{\W}<\infty \}\:,$$
where
$$\|f\|_{\W}:=\inf\left\{\sum_{j=1}^{\infty}(1+\log j)\|f_j\|_1\,\log\log\frac{4\,\|f_j\|_{\infty}}{\|f_j\|_1}\:\:\bigg|\:\:
\begin{array}{cl}
f=\sum_{j=1}^{\infty}f_j,\:\\
\sum_{j=1}^{\infty}|f_j|<\infty\:\textrm{a.e.}\\
f_j\in L^{\infty}(\TT)
\end{array}
\right\}\;.$$
we have \beq\label{K1}
\|\,C_{lac}(f)\,\|_{1,\infty}\lesssim \|f\|_{\W}\:.
\eeq
and thus $Z=\W$ is a $\mathcal{C}_L-$space.}

From this, we immediately deduce:

\begin{c0}\label{Orlicz}[\textsf{Maximal characterization}] (\cite{lvPCFS1})

Let $\v:\:[0,1]\,\rightarrow\,\R_{+}$ be a non-decreasing concave function with $\v(0)=0$.
Assume that there exists
\beq\label{las}
\lim_{{s\rightarrow 0}\atop{s>0}}\:\frac{\v(s)}{\v_0(s)}\in [0,\,\infty]\:.
\eeq
Then the largest Lorentz $\mathcal{C}_L$-space $\l_{\v}$ for which $\v$ obeys \eqref{las} is given by
$$Z_0=L\,\log\log L\,\log\log\log\log L\:.$$
In particular, taking in \eqref{las} the function $\v(s)=s\,\log\log \frac{4}{s}$, one further deduces
that the Model Conjecture $1$ is false.
\end{c0}

Finally, we record

\begin{c0}\label{Halo}[\textsf{Restricted weak-type does not imply weak-type}]

The lacunary Carleson operator obeys the following:
\begin{itemize}
\item $C_{lac}$ is a sublinear, translation invariant operator;
\item $C_{lac}$ is of restricted weak type $(L\log\log L, \,L^1)$ and hence \eqref{restr} holds validating our heuristic from before.
\item  $C_{lac}$ is not of weak type $(L\log\log L, \,L^1)$.
\end{itemize}
 \end{c0}

This last corollary disproved the so-called ``generalized Halo Conjecture'', conjecturing the equivalence of weak-type and restricted--weak-type boundedness for translation-invariant sublinear operators on Lorentz (Orlicz) spaces.
\medskip

We mention here that previous results on the $L^{1,\infty}$-behavior of the lacunary Carleson operator (or its Walsh analogue) were obtained chronologically in \cite{Zyg}, \cite{ko2}, \cite{An1}, \cite{ko1}, \cite{LaDo}, \cite{lvKony1} and  \cite{dp} (for more details on this, please consult \cite{lvPCFS1}).

\subsubsection{Resolution of the Model Conjecture $2$. Main results.}

Regarding the Model Conjecture $2$, no previous results were known besides the partial answer for the stronger case of the full Carleson operator provided in \cite{sj3} (for the Fourier-Walsh case) and \cite{lvCarl1} (for the Fourier case). This however, is now settled through the present paper. Indeed, we have:
$\newline$

\noindent\textbf{Main Theorem A.} [\textsf{Sharp $L^1$-strong bounds for $C_{lac}$}]

\textit{Model Conjecture $2$ is true.}
$\newline$

Our Main Theorem A above is a consequence of the following result:
$\newline$

\noindent\textbf{Main Theorem B.} [\textsf{Restricted type version}]

\textit{Let $\{n_j\}_{j\in\N}\subset\N$ be any given lacunary sequence. Then the following are true:
$\newline$
i) There exists $C_1>0$ such that for any $F\subseteq\TT$ measurable and any function $g\in L^{\infty}(\TT)$ the \underline{adjoint} lacunary Carleson operator $C_{lac}^{*}$ obeys
\beq\label{carlac1aa}
\|\chi_{F}\,C_{lac}^{*}(g)\|_{L^1}\leq C_1\,|F|\,\log\frac{4}{|F|}\,\|g\|_{\infty}\:.
\eeq
$\newline$
ii) There exist $C_2>0$ such that for any $\eta\in(0,1]$ there exists $F(\eta)=F\subseteq\TT$ with $|F|=\eta$ such that
\beq\label{carlac1bb}
\|C_{lac}(\chi_F)\|_{L^1}\geq C_2\,|F|\,\log\frac{4}{|F|}\:\:\:\:\;\:\:\:\:\:.
\eeq}

\subsection{Structure of the paper}

In this final (sub)section of the introduction we detail the structure of the paper:

\begin{itemize}

\item following the discretization in \cite{f}, Section \ref{discr1} takes care of the standard decomposition of the (lacunary) Carleson operator into a superposition of operators that are well-localized in the time-frequency plane (\textit{i.e.} adapted to tiles of area one).

\item in Section \ref{reduct} we show that Main Theorem B implies Main Theorem A and also prove the easy part of Main Theorem B, that is part ii).

\item Section \ref{resol} introduces the key concept of the time-frequency regularization (\textbf{TFR}) of a set.

\item Section \ref{tilediscr1} unravels the first chapter of the tile discretization discussing the notions of a mass and $F-$mass of a tile as well as the ordering relation among tiles. The section ends with an algorithm describing the so called tree $*-$foliation of the time-frequency plane.

\item in Section \ref{tffolF} we introduce the new key concept of the set resolution of the time frequency plane at a fix frequency which connects the structure of the set $F$ with that of the lacunary Carleson operator. Our reasonings  relies on the \textbf{TFR} concept introduced in Section \ref{resol}. This is the second (and final) chapter of our tile discretization.

\item in Section \ref{prep} we split the proof of the Main Theorem B part i) into four theorems\footnote{Theorem \ref{clust} turns out it can be reduced to the classical statement that the maximal Hilbert transform acts boundedly from $L\log L$ to $L^1$.} - Theorems \ref{clust}, \ref{ortoF1}, \ref{noosc} and \ref{ortog}. In the same section we state and prove the Main Lemma.

\item in Section \ref{pfth1} we present the proof of Theorem \ref{ortoF1} that can be reduced to the proofs of the $L^2$- boundedness of the Carleson operator and of the Main Lemma.

\item in Section \ref{thL1} we prove Theorem \ref{noosc} by closely following the spirit of our approach in \cite{lvKony1}.

\item Section \ref{thL2} is the most technical component of our paper. It seeks to reduce the proof of Theorem \ref{ortog} to that of Proposition \ref{mainpropred}. In this section we will use the entire machinery developed by the author in the previous papers that concern the Carleson operator as well as the tools introduced in the present paper with a key emphasis on the properties of the (second) tile descritization developed in Section \ref{tffolF}.

\item in Section \ref{Errterm} we treat the error term appearing in Proposition \ref{mainpropred}.

\item Section \ref{mainthm} deals with the main term in Proposition \ref{mainpropred}. This is the part where we develop a new approach in understanding the subtle interaction between the structure of the linearizing function $N$ encapsulated in the properties of set $E:=\{E(P)\}_{P\in\P}$ and the corresponding structure of the measurable set $F$ that helps us to treat simultaneously the family of tiles with distinct mass parameter as well as those with distinct $F-$mass parameter.

\item Section \ref{FR} incorporates several final remarks.

\item Section \ref{Zygmund} constitutes the Appendix of our paper and provides some light into the motivation and relevance that are hidden behind the time-frequency regularization of a set, concept that, as we will see, reveals interesting connections between additive combinatorics and time-frequency analysis areas.
\end{itemize}

\section{Operator discretization - First stage}\label{discr1}

Since the maximal operator under discussion is nothing else than a \textit{lacunary} version of the Carleson operator,
as usual in such context, we will use time-frequency methods to analyze it.

Now, the study of our operator
\beq\label{carlac}
C_{lac}^{\{n_j\}_j}f(x)\approx\sup_{j}|S_{n_j}f(x)|\:\:\:\textrm{with}\:f\in C^1(\TT)\,,
\eeq
may be canonically reduced to the analysis of
\beq\label{carlac1}
Tf(x):=\sup_{j\in\N}\,\left|\int_{\TT} \frac{1}{x-y}\,e^{2\pi\,i\,n_j\,(x-y)}\,f(y)\,dy\right| \:,
\eeq
where here $\{n_j\}_j$ is a prescribed lacunary sequence of positive integers.

Applying Fefferman's approach, (\cite{f}), we perform the following steps:
\begin{itemize}
\item firstly, linearizing our operator, we construct $N:\:\TT\rightarrow\:\{n_j\}_j\;$ measurable function such that\footnote{For technical reasons we will erase the term $N(x)\,x$ in the phase of the exponential, as later
in the proof this will simplify the structure of the adjoint operators $T_P^{*}$.}
\beq\label{carlac11}
Tf(x)=\int_{\TT} \frac{1}{x-y}\,e^{-2 \pi\,i\,N(x)\,y}\,f(y)\,dy\,.
\eeq

\item next, using the dilation symmetry of the kernel, we decompose
$$\frac{1}{y}=\sum_{k\geq 0} \psi_k(y)\:\:\:\:\:\:\:\:\:\forall\:\:0<|y|<1\:,$$
where  $\psi_k(y):=2^{k}\psi(2^{k}y)$ (with $k\in \N$) and $\psi$ an odd
$C^{\infty}$ function such that
$\operatorname{supp}\:\psi\subseteq\left\{y\in \R\:|\:2<|y|<8\right\}$.

\item deduce now that in the newly created context the following holds
\beq\label{carllac1}
Tf(x)=\sum_{k\geq 0}\int_{\TT}e^{-2\pi\,i\,N(x)\,y}\,\psi_{k}(x-y)\,f(y)\,dy\:.
\eeq

\item given $k\in\N$, we partition the time-frequency plane in tiles\footnote{Rectangles of area one.} of the form $$P=[\o,I]\,,$$
where $\o,\,I$ are dyadic intervals\footnote{With respect to the canonical dyadic grids on $\R$ and respectively $\TT$.} such that $|\o|=|I|^{-1}=2^{k}$.
With this done, we let $\P(k)$ be the collection of tiles at scale $k$ and define the collection of all the tiles be
\beq\label{tiless}
\P=\bigcup_{k\in\N}\P(k)\,.
\eeq

\item to each tile $P=[\o,I]\in\P$ we associate a set encapsulating the ``amount" of the graph of $N$ contained in $P$, that is
$$E(P):=\left\{x\in I\:|\:N(x)\in
\o\right\}\,.$$

\item finally, for $P=[\o,I]\in\P(k)$, we define the operators
$$T_{P}f(x)=\left\{\int_{\TT}e^{-2\pi\,i\,N(x)\,y}\,\psi_{k}(x-y)\,f(y)\,dy\right\}\chi_{E(P)}(x)\:,$$
and conclude that
\beq\label{discret}
Tf(x)=\sum_{P\in\P} T_P f(x)\,.
\eeq
\end{itemize}

Notice that if we think at $N:\:\TT\rightarrow\:\{n_j\}_j\;$ as a predefined measurable function, then, the above decomposition does not depend on the function $f$. Using this perspective will be enough to show that the bounds on $T$ do not depend on $N$.

\begin{obs}\label{tileadj}
Since our procedure will involve the support of the adjoint operators $\{T_P\}_{P\in\P}$ we first isolate an elementary piece $T_P$ (and the corresponding $T_P^*$) and briefly introduce several notations that we will use in our later reasonings:

For $P=[\o_P,I_P]\in\P$ we set $c(I_P)$ the center of the interval $I_P$ and define $$I_{P^*}=[c(I_P)-\frac{17}{2}|I_P|,\,c(I_P)-\frac{3}{2}|I_P|]\cup[c(I_P)+\frac{3}{2}|I_P|,\,c(I_P)+\frac{17}{2}|I_P|].$$
We then have the following properties:
\beq\label{support}
\textrm{supp}\,T_{P}\subseteq I_P\:\:\:\textrm{and}\:\:\:\textrm{supp}\,T_{P}^{*}\subseteq I_{P^*}\:.
\eeq
Notice that we can express the set containing the support of $T_{P}^{*}$ as
\beq\label{support1}
I_{P^*}=\bigcup_{r=1}^{14}I_{P*}^r\,,
\eeq
with each $I_{P^*}^r$ a dyadic interval of length $|I_P|$.

We now set
\beq\label{support2}
\tilde{I}_{P}=[c(I_P)-\frac{17}{2}|I_P|,\,c(I_P)+\frac{17}{2}|I_P|]\,,
\eeq
and notice that $ I_P\cup I_{P^*}\subset \tilde{I}_{P}=17 I_{P}\:.$

Finally, using a standard reasoning, we can reduce our analysis to the following situation that will apply from now on throughout the entire paper:
\beq\label{supportinttt}
\textrm{if}\:P_1,\,P_2\in\P\:\textrm{s.t.}\:|I_{P_1}|\not=|I_{P_2}|\:\Rightarrow\:|I_{P_1}|\leq 2^{-10}\,|I_{P_2}|\:\textrm{or}\:|I_{P_2}|\leq 2^{-10}\,|I_{P_1}|\,.
\eeq
As a consequence, we deduce that if $P_1,\,P_2\subset \P$ such that $I_{P_1}\subsetneq I_{P_2}$  then
\beq\label{supportint}
\textrm{supp}\,T_{P_1}^{*}\cap\textrm{supp}\,T_{P_2}^{*}=\emptyset\:.
\eeq
\end{obs}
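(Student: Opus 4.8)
The statement is a collection of elementary bookkeeping facts about the building blocks $T_P$, so the plan is to verify each of them in turn by a direct computation. First I would write the adjoint kernel explicitly: pairing
$T_P f=\chi_{E(P)}\cdot\big(\int_{\TT}e^{-2\pi i N(x)y}\psi_k(x-y)f(y)\,dy\big)$
against a test function $g$ and reading off $T_P^{*}$ gives
\[
T_P^{*}g(y)=\int_{E(P)}e^{2\pi i N(x)y}\,\overline{\psi_k(x-y)}\,g(x)\,dx\,.
\]
Since $E(P)=\{x\in I_P:N(x)\in\o_P\}\subseteq I_P$, the cutoff $\chi_{E(P)}$ forces $\operatorname{supp}T_P\subseteq I_P$. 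For $T_P^{*}$, the integrand vanishes unless there is $x\in I_P$ with $\psi_k(x-y)\neq0$, i.e. $2<2^k|x-y|<8$; recalling $|I_P|=2^{-k}$ this means $2|I_P|<|x-y|<8|I_P|$ for some $x\in I_P$. Taking the union over $x\in I_P$ of the two arcs $\{y:2|I_P|<|x-y|<8|I_P|\}$ and rewriting the endpoints through $c(I_P)$ and $|I_P|$ yields precisely $y\in I_{P^{*}}$, which is \eqref{support}.

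Second, I would use that $I_P$ is dyadic of length $|I_P|$, so $c(I_P)=(j+\tfrac12)|I_P|$ for an integer $j$; hence $c(I_P)\pm\tfrac32|I_P|$ and $c(I_P)\pm\tfrac{17}{2}|I_P|$ are all integer multiples of $|I_P|$. Therefore each of the two intervals comprising $I_{P^{*}}$ has length $7|I_P|$ and is aligned to the standard dyadic grid at scale $|I_P|$, so it splits into $7$ dyadic intervals of length $|I_P|$; this gives the $14$-fold decomposition \eqref{support1}. The inclusion $I_P\cup I_{P^{*}}\subseteq\tilde I_P=17I_P$ in \eqref{support2} is then immediate, since $I_P=[c(I_P)-\tfrac12|I_P|,\,c(I_P)+\tfrac12|I_P|]$ and $I_{P^{*}}$ both lie inside $[c(I_P)-\tfrac{17}{2}|I_P|,\,c(I_P)+\tfrac{17}{2}|I_P|]$.

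Third, for the harmless reduction \eqref{supportinttt} I would group the scales into residue classes modulo $10$: writing $\P=\bigcup_{m=0}^{9}\P_m$ with $\P_m=\bigcup_{k\equiv m\,(10)}\P(k)$, we have $T=\sum_{m=0}^{9}\sum_{P\in\P_m}T_P$, a sum of ten operators, so it suffices to prove all estimates of the paper for each $m$ separately; and within a single $\P_m$ any two distinct scales differ by at least $10$, so the interval lengths $2^{-k}$ have ratio at least $2^{10}$, which is exactly \eqref{supportinttt}. Finally, \eqref{supportint} follows by combining the previous points: if $I_{P_1}\subsetneq I_{P_2}$ then \eqref{supportinttt} forces $|I_{P_1}|\le 2^{-10}|I_{P_2}|$, so $17I_{P_1}$ has length at most $17\cdot2^{-10}|I_{P_2}|$ and, being centered at a point of $I_{P_2}$, lies in the open interval $\big(c(I_{P_2})-\tfrac32|I_{P_2}|,\,c(I_{P_2})+\tfrac32|I_{P_2}|\big)$, which is disjoint from $I_{P_2^{*}}$ by the definition of the latter. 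Hence $\operatorname{supp}T_{P_1}^{*}\subseteq I_{P_1^{*}}\subseteq 17I_{P_1}$ is disjoint from $I_{P_2^{*}}\supseteq\operatorname{supp}T_{P_2}^{*}$.

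I do not expect a genuine obstacle: once the adjoint kernel is written out, the whole observation is routine. The only points needing a little care are (a) checking the dyadic alignment of $I_{P^{*}}$ so that the $14$-fold split holds with no fractional pieces, and (b) keeping the geometric constants consistent in the last step so that the inclusion of $17I_{P_1}$ into the open $3I_{P_2}$ has enough slack to survive — which it does comfortably, since $17\cdot2^{-10}\ll 1$.
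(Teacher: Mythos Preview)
Your proposal is correct and is precisely the routine verification the paper leaves to the reader; the Observation is stated without proof, and your computations (the adjoint kernel support, the dyadic alignment giving $14$ intervals, the mod-$10$ scale grouping for \eqref{supportinttt}, and the geometric inclusion $17I_{P_1}\subset 3I_{P_2}$ for \eqref{supportint}) are exactly the intended ``standard reasoning.'' There is nothing to add.
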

$\newline$

\section{Reducing our Main Theorem A to the Main Theorem B. Proof of Main Theorem B part ii)}\label{reduct}

\subsection{Main Theorem B $\:\Rightarrow\:$  Main Theorem A}

Assuming for the moment that Main Theorem B holds we have:

Part ii) trivially implies that no larger space than $L\log L$ can satisfy \eqref{largestL1lac} while
 part i) proves that $L\log L$ indeed obeys \eqref{largestL1lac}. To see this last fact, we proceed as follows:

For each $l\in\Z$ define $$F_{l}:=\{x\in\TT\,|\,|f(x)|\in [2^{l},\,2^{l+1})\}\:.$$
Then notice that we have
\beq\label{lloglloglogl}
\|f\|_{L\log L}\approx\sum_{l\in\Z} 2^{l}\,|F_l|\, \log \frac{4}{|F_l|}\:.
\eeq
After the linearization of the lacunary Carleson operator, we deduce that for a suitable $g\in L^{\infty}$ with $\|g\|_{\infty}=1$, one has
\beq\label{carlac1bc}
\eeq
$$\|C_{lac}^{\{n_j\}_j}(f)\|_{L^1}=\int (C_{lac}^{\{n_j\}_j})^{*}(g)\,f\leq\sum_{l\in Z}
\left|\int_{F_l} (C_{lac}^{\{n_j\}_j})^{*}(g)\,(\chi_{F_l}\,f)\right|$$
$$\leq \sum_{l\in Z}\|\chi_{F_l}\,f\|_{\infty}\,\int_{F_l} |(C_{lac}^{\{n_j\}_j})^{*}(g)|\,
\lesssim\,\sum_{l\in Z} 2^{l}\,|F_l|\,\log\frac{4}{|F_l|}\approx\|f\|_{L\,\log L}\:.$$

\subsection{Proof of the Main Theorem B, part ii)}

Fix $\eta\in(0,\frac{1}{2}]$.  Take now $F=[\frac{1}{2}-\eta,\,\frac{1}{2}]$. We will show that\footnote{For the sake of the philosophy behind the cancelation properties of the kernel $\frac{1}{y}$, we want to present a proof that avoids the precise computation of the Hilbert transform of a characteristic function of an interval.}
\beq\label{carlac1bb1}
\|C_{lac}^{\{n_j\}_j}(\chi_F)\|_{L^1}\gtrsim\,|F|\,\log\frac{4}{|F|}\:\:\:\:\;\:\:\:\:\:.
\eeq

Choosing now in \eqref{carlac11} the linearizing function $N(x)\equiv0$ we deduce that $|C_{lac}^{\{n_j\}_j}(\chi_F)|\geq |H(\chi_{F})|$ where here $H$ stands for the Hilbert transform.

Consequently, using standard duality, we have that
$$\|C_{lac}^{\{n_j\}_j}(\chi_F)\|_1\geq \int_{\TT}|H(\chi_{F})|\geq |\int_{\TT}\chi_{F}\,H^{*}(\chi_{[\frac{1}{2},1]})|
=|\int_{\TT}\chi_{F}\,H(\chi_{[\frac{1}{2},1]})|\,.$$
In the same spirit with \eqref{carllac1}, we write
\beq\label{h}
H(\chi_{[\frac{1}{2},1]})(x)=\sum_{k\geq 0}H_{k}(\chi_{[\frac{1}{2},1]})(x):=\sum_{k\geq 0}\int_{\TT}\psi_{k}(x-y)\,(\chi_{[\frac{1}{2},1]})(y)\,dy\:.
\eeq
Choosing now $k_{F}\in\N$ such that $2^{-k_{F}}\approx |F|$ and setting $H_{\leq k_{F}}:=\sum_{k\leq k_{F}} H_{k}$ and
$H_{>k_{F}}:=\sum_{k> k_{F}} H_{k}$ we further have
$$|\int_{\TT}\chi_{F}\,H(\chi_{[\frac{1}{2},1]})|\geq |\int_{\TT}\chi_{F}\,H_{\leq k_{F}}(\chi_{[\frac{1}{2},1]})|-|\int_{\TT}\chi_{F}\,H_{>k_{F}}(\chi_{[\frac{1}{2},1]})|$$
$$\gtrsim \sum_{k\leq k_{F}} \int_{\TT}\chi_{F}\,|H_{\leq k_{F}}(\chi_{[\frac{1}{2},1]})|- |F|^{\frac{1}{2}}(\int_{F}|H_{>k_{F}}(\chi_{[\frac{1}{2},1]})|^2)^{\frac{1}{2}}$$
$$\gtrsim k_{F}\,|F|-c\,|F|\approx |F|\log\frac{4}{|F|}\:,$$
where in the last line $c>0$ is a suitable absolute constant derived from the operator norm $\|\cdot\|_{L^2\rightarrow L^2}$ of the maximal Hilbert transform.

\section{The time-frequency regularization of a set (at a fix frequency)}\label{resol}

In this section we introduce a new concept that stays at the foundation of our approach in the present paper and has the potential of becoming a useful tool in other related problems. For more about the usefulness of this concept and for a suggestive application of it, please see the Appendix.

Let $F\subseteq\TT$ be a measurable set. In what follows we present \textbf{the time-frequency regularization (TFR) of the set $F$} (relative to the $0$-frequency).

\subsection{The family of $0$-frequency tiles.} In our construction we need to consider special structured family of tiles at $0$-frequency arising from the properties of the level sets of the Hardy-Littlewood maximal operator associated with the characteristic function of $F$. The tiles involved in these structures will be selected from within a particular family of tiles called from now on \textit{$0$-frequency tiles}. Indeed, $R=[\o_{R},\,I_{R}]\in\P$ is called a \textit{$0$-frequency tile} iff $0\in\o_{R}$. Notice that the family of all $0$-frequency tiles - denoted throughout the paper with $\mathfrak{R}$ - consists from precisely those tiles which are ``sitting" on the real axis. In what follows, we will keep the letter $R$ for elements within the set $\mathfrak{R}$, while $\r$ will be reserved for subfamilies of $\mathfrak{R}$.

Observe from the very definition of a $0$-frequency tile that any such $R\in\mathfrak{R}$ is uniquely determined by $I_{R}$. We will from now on identify these two objects (in particular any dyadic time interval will give rise to a corresponding $0-$frequency tile). Thus given $I\subseteq\TT$ dyadic interval, we will often refer to the corresponding $0-$frequency tile as $R(I)$.

\subsection{Spacial decomposition according to the level sets of the Hardy-Littlewood Maximal function $M(\chi_{F})$.} Define $k_F:=[\log\frac{1}{|F|}]+1$ and for each $k\in\N$ with $k\leq k_F$  we let $\I_k$ be the collection of maximal dyadic intervals $I$ such that
\beq\label{lset}
\frac{|F\cap I|}{|I|}>2^{-k}\:,
\eeq
and set
\beq\label{lsetun}
\bar{\I}_k=\bigcup_{I\in\I_k}\,I\,.
\eeq
Notice that we have the following natural inclusion relation: for any $1<k<k_{F}$ and $I\in\I_{k-1}$ there exists $J\in\I_{k}$ such that $I\subsetneq J$ and hence $\bar{\I}_{k-1}\subsetneq \bar{\I}_{k}$.

\subsection{TFR of $F$: the algorithm.}\label{TiFeFol}
In what follows we present the algorithm of the \textbf{time-frequency $k-$regularization of $F$}. This new concept introduced here will prove fundamental in the tile-discretization of our lacunary Carleson operator performed in Section \ref{tffolF}.

Fix from now on $k\in\N$, with $1<k\leq k_{F}$; also fix $I\in\I_{k}$. The \textit{central iterative body} of the algorithm is given by:
\begin{itemize}
 \item \underline{\textbf{Input:}} we are given a non-empty collection\footnote{This will be specified at each step of our algorithm, see below.} of dyadic intervals $\A \subseteq\I_{k-1}(I)$ .

\item \underline{\textbf{Output:}}

- a threshold frequency $\a_{\A}$;

- three sets\footnote{It is possible for some of them to be empty. However if all the three sets are empty then the algorithm will stop at that step.} of dyadic intervals: $\B$, $\B^{U}$ and $\B^{L}$.

\item \underline{\textbf{Properties of the output:}}

1) \textit{Selection  of} $\a_{\A}$: From the set $\{|J|\}_{J\in\A}$ pick the smallest possible size $|J_0|$ such that the following saturation condition is satisfied:
\beq\label{[pcondkey}
\sum_{{J\in\A}\atop{|J|\leq|J_0|}} |J|\geq\frac{1}{2}\sum_{J\in\A} |J|\:.
\eeq
Set $\a_{\A}:=|J_0|^{-1}$.

2) \textit{Selection  of} $\B$:

\beq\label{defB}
\B:=\left\{\begin{array}{cl}
J\subset I\\
J\:\textrm{dyadic}
\end{array}\:\bigg|\:\begin{array}{cl}
|J|^{-1}=\a_{A}\\
\exists\:J'\in \A\:\:\textrm{s.t.}\:\:J'\subset J
\end{array}\right\}\:.
\eeq

3) \textit{Selection  of} $\B^{U}$:

\beq\label{defBU}
\B^U:=\{J\in\A\,|\,|J|^{-1}>\a_{A}\}\:.
\eeq

4) \textit{Selection  of} $\B^{L}$:

\beq\label{defBU}
\B^L:=\{J\in\A\,|\,|J|^{-1}<\a_{A}\}\:.
\eeq
\end{itemize}

With this done, we iterate the above central body of the algorithm as follows:

\begin{itemize}
\item \textbf{Step 1.} Initialize $\A:=\I_{k-1}(I)$ and denote the output with:

- $\a:=\a_{\A}$;

- $\c(I):=\B$;

- ${\S}^{U}(I):=\B^{U}$;

- ${\S}^{L}(I):=\B^{L}$.

\item \textbf{Step 2.}  Initialize $\A:={\S}^{U}(I)$ and denote the output:

- $\a^{U}:=\a_{\A}$;

- $\c^{U}(I):=\B$;

- ${\S}^{UU}(I):=\B^{U}$;

- ${\S}^{UL}(I):=\B^{L}$.

Then, we initialize  $\A:={\S}^{L}(I)$ and denote the output:

- $\a^{L}:=\a_{\A}$;

- $\c^{L}(I):=\B$;

- ${\S}^{LU}(I):=\B^{U}$;

- ${\S}^{LL}(I):=\B^{L}$.

\item \textbf{Step r, $r\geq 3$.} We continue inductively. From step $r-1$, for $s_j\in\{U,\,L\}$ with $j\in\{1,\ldots, r-1\}$, we have

- $2^{r-2}$ threshold frequencies of the form
$$\a^{s_1\ldots s_{r-2}}\,;$$

- $2^{r-2}$ sets of the form
$$\c^{s_1\ldots s_{r-2}}(I)\,;$$

- $2^{r-2}$ sets of the form
$$\S^{s_1\ldots s_{r-2} U}(I)\,;$$

- $2^{r-2}$ sets of the form
$$\S^{s_1\ldots s_{r-2} L}(I)\,.$$

Now, we are ready to apply the main body of our algorithm:

Initialize $\A:=\S^{s_1\ldots s_{r-1}}(I)$ and denote the output with:

- $\a^{s_1\ldots s_{r-1}}:=\a_{\A}$;

- $\c^{s_1\ldots s_{r-1}}(I):=\B$;

- ${\S}^{s_1\ldots s_{r-1}U}(I):=\B^{U}$;

- ${\S}^{s_1\ldots s_{r-1}L}(I):=\B^{L}$.

We will run this algorithm until first $r$ for which all the sets $\c^{s_1\ldots s_{r-1}}(I)$ are empty; this last fact is guaranteed by the harmless assumption that the original set $\I_{k-1}(I)$ can be represented by a finite union of dyadic intervals.
\end{itemize}

With these done, we define the following sets:
\begin{itemize}
\item for $r=0$, we simply define
$$\r_k^{0}[I]:=R(I)\,,$$
and set
\beq\label{k20}
\r_{k}^{F,0}:=\bigcup_{I\in\I_{k}}\r_k^{0}[I]\:.
\eeq

\item for $r=1$, we let
$$\r_k^{1}[I]:=\{R(J)\}_{J\in \c(I)}\,,$$
and set
\beq\label{k2}
\r_{k}^{F,1}:=\bigcup_{I\in\I_{k}}\r_k^{1}[I]\:.
\eeq

\item for general $r\geq 2$, we let
$$\r_k^{s_1\ldots s_{r-1}}[I]:=\bigcup_{J\in\c^{s_1\ldots s_{r-1}}(I)} R(J)\,,$$

$$\r_k^{r}[I]:=\bigcup_{j=1}^{r-1}\bigcup_{s_j\in\{U,L\}} \r_k^{s_1\ldots s_{r-1}}[I]\,,$$
and finally
\beq\label{kn}
\r_{k}^{F}[I]:=\bigcup_{r\geq 0}\r_k^r[I]\:.
\eeq
\item for $1<k\leq k_{F}$, we define the \textbf{time-frequency $k-$regularization of $F$} as the collection of $\mathfrak{R}-$tiles given by:

\beq\label{ktree}
\r_{k}^{F}:=\bigcup_{I\in\I_k}\r_{k}^{F}[I]\:.
\eeq

\item for $k=1$, we define the \textbf{time-frequency $1-$regularization of $F$} as
\beq\label{1tree}
\r_{1}^{F}:=\bigcup_{I\in\I_1} R(I)\:.
\eeq
\end{itemize}

Finally the (global) \textbf{time-frequency regularization (TFR) of $F$} is defined as the subcollection of tiles in
$\mathfrak{R}$ given by

\beq\label{totalfol}
\r^{F}:=\bigcup_{1\leq k\leq k_{F}}\r_{k}^{F}\:.
\eeq

\subsection{Key properties of TFR of a set}\label{propTFR}

In this subsection we analyze several of the important properties that characterize the above construction.

For notational simplicity we will drop from now on the super-index $F$ from the definition of the sets $\r$ above. Also,
from now on we will refer to $\r_{k}\setminus\r_{k}^{0}$ as simply $\r_{k}^{>0}$.

We now present several definitions:

As before, fix $k\in\N$ with $1\leq k\leq k_{F}$ and further fix $I\in\I_{k}$.

\begin{defn}\label{Rb}[\textsf{Base of $R$}]

Assume $R\in \r_{k}^{>0}$. We define the \textit{base of $R$}, denoted by
\beq\label{baser0}
\underline{R}\,,
\eeq
the unique tile $R'\in \r_{k}$ such that
\beq\label{baser}
R'\:\:\textrm{is minimal relative to the inclusion}\:\: I_R\subsetneq I_{R'}\:.
\eeq
\end{defn}

\begin{d0}\label{bsup}[\textsf{Children of} $\underline{R}$.]

For $\underline{R}\in\r_k$ we set
\beq\label{defchi}
\r_k^{chi}(\underline{R}):=\{R'\in \r_{k}\,|\,\underline{R'}=\underline{R}\}\:.
\eeq
\end{d0}

\begin{d0}\label{bsup}[\textsf{Base-support} $\underline{\r}_k^{s_1\ldots s_{r}}[I]$] \footnote{Throughout the paper we adopt the following convention: when setting the parameter $r\in\N$ we allow the value $r=0$, and, in this case, we simply set
\beq\label{conv0}
\underline{\r}_k^{s_1\ldots s_{r}}[I]:=\r_k^{0}[I]\:\:\textrm{and}\:\:\r_k^{s_1\ldots s_{r}}[I]:=\r_k^{1}[I]\:.
\eeq}

We let the \textit{base-support} of $\r_k^{s_1\ldots s_{r}}[I]$ be
\beq\label{bsupp}
\underline{\r}_k^{s_1\ldots s_{r}}[I]:=\{\underline{R}\,|\,\exists\:R'\in\r_k^{s_1\ldots s_{r}}[I]\:\:s.t.\:\:\underline{R}=\underline{R}'\}\,.
\eeq
\end{d0}

\begin{defn} Let $\a$ be a given frequency and $R\in \r$. We say
\begin{itemize}
\item $\a\in R$ iff  $\a\in \o_{R}$;

\item $\a\overline{\in} R$ iff $\a\in R$ and $\a\notin\underline{R}$.
\end{itemize}
\end{defn}

Notice now that the following hold:

\begin{itemize}
\item given $\a$ frequency and $R\in \r_k^{s_1\ldots s_{r}}[I]$ such that $\a\overline{\in} R$ then
$\a\overline{\in} R'$ for any $R'\in \r_k^{s_1\ldots s_{r}}[I]$; this is a direct consequence of the relation
\beq\label{eqdist}
\forall\:R,\,R'\in \r_k^{s_1\ldots s_{r}}[I]\:\:\textrm{we have}\:\:|I_{R}|=|I_{R'}|\:.
\eeq

\item if $\F$ is a given finite set of frequencies we let
$$\F[R]:=\{\a\in\F\,|\,\a\overline{\in} R\}\:.$$
Then, based on the previous item, we have that
$$\F[R]=\F[R']\:\:\:\:\forall\:R,\,R'\in \r_k^{s_1\ldots s_{r}}[I]\:.$$
\end{itemize}

\begin{defn} Let $\F$ be a finite set of frequencies. From the above items, we notice that it makes sense to define
\beq\label{freqr}
\F(\r_k^{s_1\ldots s_{r}}[I]):=\F[R]\,,
\eeq
for some $R\in \r_k^{s_1\ldots s_{r}}[I]$.
\end{defn}

With this, let us list several key properties of our construction:

\begin{itemize}

\item The final products of the algorithm above, \emph{i.e.} \eqref{ktree}, \eqref{1tree} and \eqref{totalfol}, are in fact \textit{generalized (i.e. non-convex) trees} of $0$-frequency tiles relative to the standard order relation $``\leq"$. \footnote{For the specific definitions of $``\leq"$ and of a (convex) tree see Definitions \ref{orderl} and \ref{tree} in Section \ref{foltree}.}

\item Let $R\in \r_k^{s_1\ldots s_{r}}[I]$ and $R'\in \r_k^{s'_1\ldots s'_{r'}}[I]$ with $r<r'$. Assume there exists
$\a$ frequency such that $\a\overline{\in} R$ and $\a\overline{\in} R'$. Then, we must have that
\begin{itemize}
\item $s'_1=s_1,\:\ldots,\,s'_{r}=s_{r}$;

\item $s'_{r+1}=L$.
\end{itemize}

\item Given $r<r'$ and assuming
\beq\label{underr}
\underline{\r}_k^{s'_1\ldots s'_{r'}}[I]\cap\underline{\r}_k^{s_1\ldots s_{r}}[I]\not=\emptyset\,,
\eeq
one must have
\begin{itemize}
\item $s'_1=s_1,\:\ldots,\,s'_{r}=s_{r}$;

\item $s'_{j}=L\:\:\:\forall\:j\in\{r+1,\ldots, r'\}$.
\end{itemize}

\item Based on the previous two items we deduce that if $\F$ is a given set of frequencies, then
\beq\label{freqcontr}
\F(\r_k^{s_1\ldots s_{r}}[I])\supseteq\bigcup_{{{s'_1=s_1,\ldots,\, s'_r=s_r}\atop{s'_{r+1}=L}}\atop{r'> r}}\bigcup_{R\in \r_k^{s'_1\ldots s'_{r'}}[I]}\F[R]\,.
\eeq

\item Similarly, if $R,\,R'$ as in the second item, then assuming that there exists $x\in I_{R}$ and $x\in I_{R'}$ we must have
\begin{itemize}
\item $s_1=s'_1,\:\ldots,\,s_{r}=s'_{r}$;

\item $s'_{r+1}=U$.
\end{itemize}

\item The following geometric decay holds: if $r'\geq r+2$ and $\s=(s_1,\ldots, s_{r})$, $\s'=(s'_1,\ldots, s'_{r'})$ given such that $s'_j=s_j$ for any $j\in\{1,\ldots, r\}$, then
\beq\label{CM}
\sum_{R'\in \r_k^{s'_1\ldots s'_{r'}}[I]}|I_{R'}|\leq\frac{1}{2}\,\sum_{R\in \r_k^{s_1\ldots s_{r}}[I]}|I_{R}|\:.
\eeq
\end{itemize}

\section{Tile Discretization I: mass, $F-$mass decompositions; tree foliations}\label{tilediscr1}

In this section we proceed with the decomposition of the family of tiles $\{P=[\o_P, I_P]\}_{P\in\P}$ according to\footnote{The explanations provided at each of the items in the listing below, are only to provide the general picture for each of the criteria. The more precise descriptions will follow immediately afterwards.}
\begin{itemize}

\item \textsf{the \textbf{$F-$mass} of a tile} - this depends on how much of the information carried by $F$ lies within the spacial support of a tile, \textit{i.e.} given $P=[\o_P, I_P]\in\P$ we classify our tiles depending on the quantity $\A_{F}(P)\approx\frac{|F\cap I_P|}{|I_P|}$.

\item \textsf{the \textbf{mass} ($F-$smooth adapted version) of a tile} - this is a refined version of the original concept of a mass of a tile introduced by C. Fefferman in \cite{f}. It combines the structural properties of the set $F$ with those of the assigned measurable function $N(x)$, with the latter component being further reflected into the properties of the sets $\{E(P)\}_{P\in\P}$. Roughly, the mass of a tile $P$ should be imagined as represented by $A(P)\approx\frac{|E(P)|}{|I_P|}$.

\item \textsf{the \textbf{tree $*$-foliation} of the time-frequency plane} - based on the standard partial order relation among tiles $``\leq"$  and on the concepts introduced in the items above, we will partition our collection of tiles into suitable sub-collections - called \emph{descending} trees - that have uniform mass and $F-$mass parameters.
 \end{itemize}

\subsection{$F-$mass decomposition}\label{Fmassdec}

Before presenting our first decomposition, we take advantage of the fact that we are in the lacunary case and thus that the frequencies $\{n_j\}_j\subseteq\N$ have the property that there exists a constant $\bar{C}>0$ depending only on our choice of the lacunary sequence such that\footnote{In fact relation \eqref{lacadv} turns out to be essentially equivalent with the lacunarity requirement; more precisely, we have that any lacunary sequence $\{n_j\}_j\subseteq\N$ obeys \eqref{lacadv} and, conversely, any (increasing) sequence of natural numbers obeying \eqref{lacadv} can be decomposed as union of no more than  $\bar{\bar{C}}>0$ (depending only on $\bar{C}$) lacunary (sub)sequences.}
\beq\label{lacadv}
\sum_{j=1}^{k} n_j< \bar{C}\,n_{k+1}\:\:\:\:\textrm{for any}\:k\in\N\,.
\eeq
Now relation \eqref{lacadv} implies in particular two key facts:
\begin{enumerate}
\item the $0$ frequency plays a special role in the time-frequency decomposition of our operator;

\item the family of all tiles $\P$ can be decomposed into three subsets
\beq\label{partP}
\P=\P(0)\cup \P_{cluster}\cup\P_{sep}\,,
\eeq
such that:
\begin{itemize}
\item $\P(0)$ does not contribute at all to the time-frequency decomposition of $T$ (all the $P\in\P(0)$ have the property that $E(P)=\emptyset$);
\item $\P_{cluster}$ consists of tiles that dilated by a fix amount (depending only on $\bar{C}$ in \eqref{lacadv}) intersect the real axis;
\item $\P_{sep}$ is given by a union of well separated trees, with each tile $P=[\o,I]\in \P_{sep}$ having the property that there exists $j\in\N$ such that $n_j\in \o$.
\end{itemize}
 \end{enumerate}

We will now make our decomposition in \eqref{partP} precise. For simplicity, assuming without loss of generality\footnote{The sequence $\{n_k\}_k$ lacunary implies $\lim \textrm{inf}_{k\rightarrow\infty}\frac{n_{k+1}}{n_k}=\a>1$.} that
\beq\label{clustertileee}
n_k= \a^k\:\:\textrm{with}\:\:\a\in\R_{+},\:\a>1\,,
\eeq
we define\footnote{Throughout this paper we will use the following standard notation: if $I$ is an (open) interval having the center $c$, then for any $b>0$ we set $b\,I:=(c-\frac{b\,|I|}{2},\,c+\frac{b\,|I|}{2})$. The constant $c(\a)$ in \eqref{clustertile} can be taken as $10(1+\lfloor\frac{1}{\a-1}\rfloor)$ where here $\lfloor x\rfloor$ stands for the enire part of $x\in\R$.}
\beq\label{clustertile}
\P_{cluster}:=\{P=[\o_P, I_P]\in\P\,|\,0\in c(\a)\,\o_{P}\}\:,
\eeq
\beq\label{septile}
\P_{sep}:=\{P=[\o_P, I_P]\in\P\setminus\P_{cluster}\,|\,\{n_j\}_j\cap \o_{P}\not=\emptyset\}\,.
\eeq
and
\beq\label{0tile}
\P(0):=\P\setminus(\P_{cluster}\cup\P_{sep})\,.
\eeq
In what follows we will focus on the only truly relevant set of tiles, that is $\P_{sep}$.

The tile discretization algorithm based on the time-frequency locations of the tiles in $\P_{sep}$ relative to the information carried by the set $F$ is related with the analysis in \cite{lvCarl1} and is now presented below:

\begin{itemize}
\item we set
\beq\label{p1}
\P^{1}:=\{P=[\o_P, I_P]\in\P_{sep}\,|\,\exists\:\:I\in\I_{1}\:\:\textrm{s.t.}\:\:5\tilde{I}_{P}\subset 200 I\:\&\:|I_P|\leq |I|\}\:.
\eeq
\item proceed by induction and assume we have constructed the set $\P^{k-1}$ for a suitable $k-1<k_{F}$;
let
$$\P^{>k-1}:=\P_{sep}\setminus \bigcup_{l\leq k-1}\P^{l}\,,$$
and define
\beq\label{pk}
\P^{k}:=\{P=[\o_P, I_P]\in\P^{>k-1}\,|\,\exists\:\:I\in\I_{k}\:\:\textrm{s.t.}\:\:5 \tilde{I}_{P}\subset 200 I\:\&\:|I_P|\leq |I|\}\:.
\eeq
\item notice now that we have the partition
\beq\label{puni}
\P_{sep}=\bigcup_{1\leq k\leq k_{F}}\P^{k}\:.
\eeq
\end{itemize}

Finally, we record that from \eqref{septile} and \eqref{puni}, one has
\beq\label{puniv}
\P=\P(0)\cup\P_{cluster}\cup\bigcup_{1\leq k\leq k_{F}}\P^{k}\:.
\eeq

We end this subsection with the following useful

\begin{d0}\label{Fmass}(\textsf{ $F-$mass of a tile})

Let $P\in\P^k\subset\P_{sep}$ where here $k\in\{1,\ldots, k_{F}\}$.

We then define the {\bf $F$-mass} of $P$ as
\beq\label{vF1}
\A_{F}(P):=2^{-k}\,.
\eeq
\end{d0}

\subsection{Partial ordering; Mass decomposition - an $F$ smooth version; Trees.}\label{massdec}

Our main focus in this subsection will be on performing a mass decomposition of our family of tiles that is adapted to the structural properties of the lacunary Carleson operator and ``behaves smoothly" relative to $F$. In doing this we transform Fefferman's mass into an adaptative concept relative to carefully chosen subfamilies of tiles. Though of different flavor, this modification shares some common features with the approach embraced by the author in defining the mass of a tile in \cite{lvPolynCar}.

We start by introducing the standard partial ordering among tiles (see \cite{f}):

\begin{defn}\label{orderl}[\textsf{Partial ordering of the tiles}]

Given any two $P=[\o,\, I],\,P'=[\o',\, I']$ in $\P$ we say that
\beq\label{ord}
P\leq P'\,,
\eeq
iff
$$I\subseteq I'\:\:\textrm{and}\:\:\o\supseteq \o'\;.$$
\end{defn}

We are now ready to introduce the main definition of this subsection:

\begin{d0}\label{mass}[\textsf{Mass of a tile ($F-$smooth version)}]

Let $P=[\o,I]\in\P_{sep}$. From \eqref{puni} we know that there exists a unique $k\in\{1,\ldots, k_{F}\}$ such that $P\in\P^{k}$.

We then define the {\bf mass} of $P$ as

\beq\label{v1} A(P):=\sup_{{P'=[\o',I']\in\:\P^k}\atop{P\leq P'}}\frac{|E(P')|}{|I'|}\,. \eeq
\end{d0}

With this, we define
\beq\label{Pn}
\P_n:=\{P\in\P_{sep}\,|\,A(P)\in(2^{-n-1}, 2^{-n}]\}\:,
\eeq
and deduce based on \eqref{puni} that
\beq\label{Ptot}
\P=\P(0)\cup\P_{cluster}\cup\bigcup_{n\in\N}\P_n\:.
\eeq

Now, once we have constructed $\P_n$ and $\P^k$, it will be useful to consider for later the families
\beq\label{Pnk}
\P_n^k:=\P_n\cap \P^k\,.
\eeq
With this, we notice the refinement of \eqref{Ptot} in the form
\beq\label{Ptotref}
\P=\P(0)\cup\P_{cluster}\cup\bigcup_{n\in\N}\bigcup_{k=1}^{k_F}\P_n^k\:.
\eeq

The next definition addresses the concept of a tree that should be regarded as encoding the time-frequency representation of a modulated, scaled (maximal) Hilbert transform:

\begin{d0}\label{tree} [\textsf{Tree}]

 We say that a set of tiles $\p\subset\P$ is a \textbf{tree} with \emph{top} $P_0$ if
the following conditions hold:
$\newline 1)\:\:\:\:\:\forall\:\:P\in\p\:\:\:\Rightarrow\:\:\:\:P\leq P_0$;
$\newline 2)\:\:\:\:\:$if $P_1,\:P_2\: \in\p$ and $P_1\leq P \leq P_2$ then $P\in\p\:.$
\end{d0}

We will need one more definition, as given by

\begin{d0}\label{tree} [\textsf{Row}]

We say that a set of tiles $\p\subset\P_{sep}$ is a \textbf{row} if there exists a collection of families of tiles $\{\p_j\}_{j\geq 1}$ such that
\begin{itemize}
\item each $\p_j$ is a tree with top $P_j=[\o_j,\,I_j]$;

\item for any $j\not=k$ we have that $I_j\cap I_k=\emptyset$;

\item the set $\p$ can be represented as
\beq\label{decrow}
\p=\bigcup_{j}\p_j\;.
\eeq
\end{itemize}
\end{d0}

We end this section by recording several useful facts:

\begin{o0}\label{mascont} 1) Remark that the simultaneous mass and $F-$mass partition of $\P_{sep}$ conserves the convexity property of the trees on which their $L^p$-boundedness is heavily relying. More precisely, for $1<k<k_F$, we have that
\beq\label{conv}
\textrm{if}\:\:P_1<P_2<P_3\:\:\textrm{such that}\:\:P_1,\,P_3\in\P_n^{k}\:\:\textrm{then}\:\:P_2\in\P_n^{k}\;.
\eeq

2) Let $\p\in\P_n^{k}$ be a tree. Recalling the notations from Observation \ref{tileadj}, we define $\I_{\p,min}^{*}$ to be the collection of minimum (relative to inclusion) time intervals $\{I_{P*}^r\}_{{P\in\p}\atop{r\in\{1,\ldots, 14\}}}$
and set $CZ(\I_{\p,min}^{*})[0,1]$ the Calderon-Zygmund decomposition of the interval $[0,1]$ with respect to $\I_{\p,min}^{*}$. \footnote{We recall here that given a collection $\A$ of dyadic intervals in $[0,1]$ we say that $CZ(\A)[0,1]$ is the Calderon-Zygmund decomposition of the interval $[0,1]$ with respect to $\A$ iff $CZ(\A)[0,1]$ can be written as $\A\cup \B$ with $\B$ a collection of dyadic intervals in $[0,1]$ such that: 1) $\A\cup \B$ forms a partition of $[0,1]$; 2) for any $I\in \A$ and $J\in\B$ one has $2I\nsupseteqq J$ and $2J\nsupseteqq I$; 3) $\B$ is a collection of maximal dyadic intervals obeying 1) and 2).}

Then, from \eqref{pk} we deduce the following key property:
\beq\label{cz}
\frac{|I\cap F|}{|I|}< 2^{-k+10}\:\:\:\:\:\:\:\:\forall\:I\in CZ(\I_{\p,min}^{*})[0,1]\:.
\eeq

In particular, deduce that if $P=[\o_P,I_P]\in\P_n^{k}$ then
\beq\label{suppPstar}
\frac{|\tilde{I}_{P}\cap F|}{|I_P|}< 2^{-k+10}\:.
\eeq

3) Let $\p\subset\P_n$ be \emph{any} given collection of tiles. Then, following the spirit of \cite{f} and - closer to our definition of mass - that of \cite{lvPolynCar}, one can use a relatively complex combinatorial procedure involving suitable tree selections partitioning $\p$ to show that\footnote{Further refinements of this decomposition were introduced in Section 5 of \cite{lvPolynCar}, though they are not necessary in our present situation. In particular, each family $\P_n$ can be reduced to a $BMO-$forest of $n^{th}$ generation - for the definition see Section 4 in \cite{lvPolynCar}.}
\beq\label{l2dec}
\|T^{\p}f\|_2\lesssim 2^{-\frac{n}{2}}\,\|f\|_2\,.
\eeq
Since these type of estimates - often referred as $L^2-$mass control - were treated extensively in \cite{lvPolynCar} in a more difficult context, we will not give supplementary details here and take \eqref{l2dec} as granted throughout the entire present paper.
\end{o0}

\subsection{Tree $*-$foliation of the time-frequency plane depending on the mass and $F$-mass parameters}\label{foltree}

In this section we intend to organize our family of tiles $\P_{sep}$ into maximal trees having uniform mass and $F-$mass parameters according to a ``descending foliation pattern" that focuses on the \textit{adjoint} support of these trees.

We start now the description of our tree selection:

Since our focus is on the family of tiles $\P_{sep}$ from our hypothesis and conventions we deduce that one can split the total family of separated tiles into maximal trees
\beq \label{ltdec}
\P_{sep}=\bigcup_{l\in\N}\p_{(l)}\:,
\eeq
with each tree $\p_{(l)}$ living at a given frequency $\o_{l}$ such that the sequence $\{\o_{l}\}_{l}$ is a strictly increasing lacunary sequence with $\o_{l+1}\geq \a\,\o_l$ where $\a>1$ - recall the assumption \eqref{clustertileee}.

Next, we decompose each tree $\p_{(l)}$ into maximal \emph{rows} with uniform $F-$mass parameter, i.e.
\beq \label{lndectree}
\p_{(l)}=\bigcup_{m=1}^{k_F}\p_{(l)}^{(m)}\:,
\eeq
where each row
\beq \label{lndectreeroq}
\p_{(l)}^{(m)}:=\P^{m}\cap \p_{(l)}\:,
\eeq
represents a union of disjoint maximal trees of uniform $F-$mass $\approx 2^{-m}$ all living at the same frequency $\o_l$.

With this done, we take each uniform $F-$mass row $\p_{(l)}^{(m)}$ and decompose it into maximal (disjoint) trees
\beq \label{maxlndectree}
\p_{(l)}^{(m)}=\bigcup_{a\geq 1} \p_{(l),a}^{(m)}\:,
\eeq
with each $\p_{(l),a}^{(m)}$ being a maximal tree of uniform $F-$mass $\approx 2^{-m}$ and living at the given frequency $\o_l$.

Now, for each\footnote{Finitely many such values of $a\in\N$.} $a\geq 1$, we isolate $\p_{(l),a}^{(m)}$ - assuming that this set of tiles is non-void - and decompose our tree in maximal sub-rows of uniform mass from the lowest to the largest value of the mass; to be more precise, we write:
\beq \label{Fmasdectree}
\p_{(l),a}^{(m)}=\bigcup_{n} \p_{(l),a}^{n,(m)}\:.
\eeq
Notice that under the assumption that $\P_{sep}$ is a finite family of tiles we have that given $m\in\N$ such that the LHS in \eqref{Fmasdectree} is nonempty, we have that the set $\{n\,|\,\p_{(l),a}^{n,(m)}\not=\emptyset\}$ is a finite convex subset of the set of natural numbers.

Next, we take each non-empty maximal row $\p_{(l),a}^{n,(m)}$ and decompose it into maximal (disjoint) trees
\beq \label{Fmasdectree1}
\p_{(l),a}^{n,(m)}=\bigcup_{b} \p_{(l),a,b}^{n,(m)}\:,
\eeq
so that each $\p_{(l),a,b}^{n,(m)}$ is maximal tree with uniform $F-$mass $\approx 2^{-m}$, uniform mass $\approx 2^{-n}$ and living at the given frequency $\o_l$.

Of key importance for us in what follows, will be the collection of tiles having a fix frequency and a given mass parameter, that is
\beq \label{Fmasdectree1n}
\p_{(l)}^{n}:=\bigcup_{a,b,m} \p_{(l),a,b}^{n,(m)}\:.
\eeq

Given now $l$ and $n$, the last stage of our decomposition regroups our constructed trees $\{\p_{(l),a,b}^{n,(m)}\}_{l,n,m,a,b}$ inside $\p_{(l)}^{n}$ following
a ``descending $*-$foliation pattern",  \emph{i.e.} at the heuristic level - we iterate over maximal families of maximal trees so that inside each family, the trees within it in have their corresponding adjoint support pairwise disjoint:

\begin{itemize}
\item let $P_{(l),a,b}^{n,(m)}$ be the top of the tree $\p_{(l),a,b}^{n, (m)}$. Let $\p_{(l)}^{n,tot}$ be the collection of all the trees $\{\p_{(l),a,b}^{n,(m)}\}_{m,a,b}$ and \textbf{$\T_l^n$} be the collection of their tops. We now define the first tree-layer of our decomposition as
\beq \label{lay1}
\p_{(l),*}^{n,[1]}\,,
\eeq
according to the following rule: a tree $\p_{(l),a,b}^{n, (m)}\in \p_{(l),*}^{n,[1]}$ iff its top $P:=P_{(l),a,b}^{n,(m)}\in \textbf{$\T_l^n$}$ has the property that there exists $x\in \TT$ obeying
\beq \label{layy1}
\eeq
\begin{itemize}
\item  $x\in \tilde{I}_P$;

\item $P$ is maximal with this property, that is, if there exists $P_1\in\T_l^n$ with $x\in \tilde{I}_{P_1}$ then we must have $|I_{P_1}|\leq |I_P|$.
\end{itemize}

Let \textbf{$\T_{l,*}^n[1]$} be the collection of all the tops of the trees inside $\p_{(l),*}^{n,[1]}$.

Notice that we do have the uniform intersection property
\beq \label{uiprop}
\sum_{P\in\textbf{$\T_{l,*}^n[1]$}} \chi_{\tilde{I}_P}(x)\leq 100\,.
\eeq
Indeed, to see this, it is enough to prove that there do not exist
\beq \label{cla0}
P_1, P_2, P_3\in \textbf{$\T_{l,*}^n[1]$}\:\:\textrm{with}\:\:|I_{P_1}|>|I_{P_2}|>|I_{P_3}|\,,
\eeq
such that
\beq \label{cla}
\tilde{I}_{P_1}\cap\tilde{I}_{P_2}\cap\tilde{I}_{P_3}\not=\emptyset\:\:\,.
\eeq
Assuming by contradiction that \eqref{cla} is violated and hence that there exists $x_0\in \tilde{I}_{P_1}\cap\tilde{I}_{P_2}\cap\tilde{I}_{P_3}$ with $P_1,\,P_2,\,P_3$ obeying \eqref{cla0} we notice that based on convention \eqref{supportinttt} we must have that either $\tilde{I}_{P_1}\supset\tilde{I}_{P_2}$ or $\tilde{I}_{P_1}\cup\tilde{I}_{P_2}\supset\tilde{I}_{P_3}$. However none of these situations are consistent with the requirement $P_1, P_2, P_3\in \textbf{$\T_{l,*}^n[1]$}$.

Assume now that $\p_{(l),a,b}^{n, (m)}\in \p_{(l),*}^{n,[1]}$. Then we define
\beq \label{compllevv1}
\bar{\p}_{(l),a,b}^{n, (m),[1]}:=\left\{\p_{(l),a',b'}^{n, (m)}\in \p_{(l)}^{n,tot}\,\large|\,
\begin{array}{cl}
\tilde{I}_{P_{(l),a,b}^{n,(m)}}\cap \tilde{I}_{P_{(l),a',b'}^{n,(m)}}=\textrm{nontrivial interval}\\
|I_{P_{(l),a',b'}^{n,(m)}}|\leq |I_{P_{(l),a,b}^{n,(m)}}|
\end{array}
\right\}
\eeq
We define now
\beq \label{compllev1}
\bar{\p}_{(l),*}^{n,[1]}:=\bigcup_{\p_{(l),a,b}^{n, (m)}\in \p_{(l),*}^{n,[1]}}\bar{\p}_{(l),a,b}^{n, (m),[1]}\,,
\eeq
and symmetrically
\beq \label{topcompllev1}
\textbf{$\bar{\T}_{l,*}^n[1]$}:=\{P_{(l),a,b}^{n, (m)}\,|\,\p_{(l),a,b}^{n, (m)}\in \bar{\p}_{(l),*}^{n,[1]}\}
\eeq

\item Next, we update our collections $\p_{(l)}^{n,tot}:= \p_{(l)}^{n,tot}[old] \setminus\bar{\p}_{(l),*}^{n,[1]}$ and
$\textbf{$\T_l^n$}:=\textbf{$\T_l^n$}[old]\setminus \textbf{$\bar{\T}_{l,*}^n[1]$}$.

We now have two possible situations: either $\p_{(l)}^{n,tot}=\textbf{$\T_l^n$}=\emptyset$ case in which our algorithm stops or $\p_{(l)}^{n,tot},\,\textbf{$\T_l^n$}\not=\emptyset$ case in which we repeat our first step and construct the corresponding sets
\beq \label{topcompllevvv1}
\p_{(l),*}^{n,[2]},\:\textbf{$\T_{l,*}^n[2]$},\:\bar{\p}_{(l),a,b}^{n, (m),[2]},\: \bar{\p}_{(l),*}^{n,[2]}\:\:\textrm{and}\:\:\textbf{$\bar{\T}_{l,*}^n[2]$}\:.
\eeq

\item We continue inductively this process so that - assuming that our algorithm didn't stop earlier - at step $p$ we construct
\beq \label{topcompllevv1}
\p_{(l),*}^{n,[p]},\:\textbf{$\T_{l,*}^n[p]$},\:\bar{\p}_{(l),a,b}^{n, (m),[p]},\: \bar{\p}_{(l),*}^{n,[p]}\:\:\textrm{and}\:\:\textbf{$\bar{\T}_{l,*}^n[p]$}\:.
\eeq

\item Notice that from our construction, this algorithm ends after $p_0\leq k_F+1$ steps. Indeed, this is an immediate consequence of the following observation: given $p_1\leq p_0$, $x\in\TT$ and any $\p_{(l),a_1',b_1'}^{n, (m_1)}\in \bar{\p}_{(l),a_1,b_1}^{n, (m_1),[p_1]}$ (non-void) such that $x\in \tilde{I}_{P_{(l),a_1,b_1}^{n, (m_1)}}$ then for any $1\leq p<p_1$ we have that there exists $\p_{(l),a',b'}^{n, (m)}\in \bar{\p}_{(l),a,b}^{n, (m),[p]}$ such that $x\in \tilde{I}_{P_{(l),a,b}^{n, (m)}}$ and $m_1<m$.
\end{itemize}

We end this section with a useful notation: in order to identify the layer to which a given tree $\p_{(l),a,b}^{n,(m)}\in \p_{(l)}^{n,tot}$ belongs we write
\beq \label{mnoptree}
\p_{(l),a,b}^{n,(m),[p]}:=\p_{(l),a,b}^{n,(m)}\:\:\textrm{iff}\:\:\p_{(l),a,b}^{n,(m)}\in\p_{(l),*}^{n,[p]}\,.
\eeq

Notice that this is a well defined notion and that for each $\p_{(l),a,b}^{n,(m)}$ there exists a unique $p$ for which \eqref{mnoptree} holds.

\section{Tile Discretization II: the set resolution of the time-frequency plane at a fix frequency}\label{tffolF}

In this section we will perform a final decomposition of our tiles depending on the deep geometric and additive combinatoric properties of the set $F$ encapsulated in the time-frequency regularization of the set $F$ developed in Section \ref{TiFeFol}. The procedure described here is a major improvement over the initial approach in \cite{lvKony1} that was designed to treat the $L^{1,\infty}$ behavior of the lacunary Carleson operator.\footnote{The origin of the tile decomposition here traces back to the so called \textbf{$(f,\lambda)-$lacunary decomposition} introduces by the author in \cite{lvKony1}.
If properly modified/extended this tile-decomposition has the potential of becoming a very useful tool in approaching other time-frequency questions asking about end-points behavior of various modulation-invariant operators. We plan to make this explicit in a future work.}

\begin{itemize}

\item Fix $1<k<k_{F}$ and $I\in\I_{k}$. Let $R\in \r_{k}^{>0}$ such that $R<R(I)$.

\medskip
Then, we define the $(R,2)-$family of tiles as
\beq\label{lkfam}
 \p_{k}^2[R]:=\left\{\begin{array}{cl}P=[\o_{P}, I_{P}]\\P\in\P^{>k}\end{array}\:\big|\:\begin{array}{cl}\tilde{I}_{P}\cap I\not=\emptyset\\
  \o_{P}\cap\o_{\underline{R}}=\emptyset\:\:\&\:\:\o_{P}\cap\o_{R}\not=\emptyset\end{array}\right\}\:.
\eeq

\begin{o0}\label{rchifoliation}
Let $\underline{R}\in\r_k$ with $I_{\underline{R}}\subseteq I$ and $I\in\I_{k}$. Set
\beq \label{rchisb}
\r_{k,chi}^{s_1\ldots s_{r}}[\underline{R}]:=\r_k^{chi}(\underline{R})\cap \r_k^{s_1\ldots s_{r}}[I]\:.
\eeq
Notice that if $R,\,R'\in \r_{k,chi}^{s_1\ldots s_{r}}[\underline{R}]$
then $|I_R|=|I_{R'}|$.

Deduce from this that
\beq \label{keypropfol}
\p_{k}^{2}[R]=\p_{k}^{2}[R']\,.
\eeq
\end{o0}

\item Let $1<k<k_{F}$ and $I\in\I_{k}$. Assume $R\in\r_{k}^{0}$ with $R=R(I)$.

\medskip
We then define the $(R,1)-$family of tiles as
\beq\label{l1fam}
\eeq
$$\p_{k}^1[R]:=\left\{P=[\o_{P}, I_{P}]\in\P^{k+1}\:\big|\:\begin{array}{cl}I_{R}\cap \tilde{I}_{P}\not=\emptyset\\
  \o_{P}\cap\o_{R}\not=\emptyset\end{array}\right\}\:.$$

\item for the limiting case $k=k_{F}$ and any $R\in\r_{k_{F}}^{F}$ we set $$\p_{k_F}^1[R]\equiv\p_{k_F}^2[R]=\emptyset\,.$$

\item we will also introduce two extra sets of tiles for the limiting case $k=1$, corresponding to the rectangles $R\in\{R(I)\}_{I\in\I_{1}}$:
\beq\label{l0fam2}
 \p_{1}^2[R]:=\left\{P=[\o_{P}, I_{P}]\in\P_{sep}\:\big|\:\tilde{I}_{P}\cap I_{R}\not=\emptyset\:\:\&\:\: \o_{P}\cap\o_{R}=\emptyset\right\}\:.
\eeq
and
\beq\label{l0fam1}
 \p_{1}^1[R]:=\left\{P=[\o_{P}, I_{P}]\in\P^{1}\cup\P^{2}\:\big|\:\tilde{I}_{P}\cap I_{R}\not=\emptyset\:\:\&\:\: \o_{P}\cap\o_{R}\not=\emptyset\,\right\}\:.
\eeq

\item finally, we let $\P[F,0]$ be the collection of tiles
\beq\label{rest}
P=[\o_P,\:I_P]\in\P_{sep}\:\;\textrm{such that}\:\:|\tilde{I}_{P}\cap F|=0\:.
\eeq
\end{itemize}

Define now the following
\begin{itemize}
\item for $1<k<k_{F}$ we let
\beq\label{rk}
\P[\r_k]:=\bigcup_{R\in\r_{k}^{0}}\p_{k}^1[R]\,\cup\,\bigcup_{R\in\r_{k}^{>0}}\p_{k}^2[R]\:.
\eeq
\item for $k=1$ we let
\beq\label{r1}
\P[\r_1]:=\bigcup_{R\in \r_1}\p_{1}^1[R]\cup\p_{1}^2[R]\:.
\eeq
\item for $k=k_{F}$ we simply set
\beq\label{r1}
\P[\r_{k_{F}}]:=\emptyset\:.
\eeq
\end{itemize}

Notice that with these done we have that
\beq\label{tot}
 \P=\P(0)\cup\P_{cluster}\cup\P[F,0]\cup\bigcup_{1\leq k<k_{F}}\P[\r_k]\:.
\eeq

As in \cite{lvKony1}, we notice that \eqref{tot} \textit{does not} express $\P$ as a disjoint union (partition) of sets.

However, as we will see later, when transferred into operator language with a further spacial localization, our tile decomposition behaves as good as a partition.

We end this section with an adaptation of our construction in Section \ref{foltree} to a generic $(R,2)-$family $\p_{k}^2[R]$.

For this, we first fix $1\leq k<k_{F}$ and assume throughout what follows that $l$ is fixed and $i\in\{1,2\}$. Setting now
\beq\label{kl}
 \p_{k,(l)}^i[R]:= \p_{k}^i[R]\cap \p_{(l)}\:,
\eeq
we remark that
\beq\label{ukl}
 \p_{k}^i[R]:=  \bigcup_{l}\p_{k,(l)}^i[R]\:.
\eeq
With this, we define the following families of tiles:
\begin{itemize}
\item using \eqref{lndectreeroq}, we let
\beq\label{klm}
 \p_{k,(l)}^{i,(m)}[R]:=  \p_{k,(l)}^i[R]\cap \p_{(l)}^{(m)}\:;
\eeq
\item using \eqref{Fmasdectree1n}, we let
\beq\label{klmtreen}
\p_{k,(l)}^{i,n}[R]:=  \p_{k,(l)}^i[R]\cap \p_{(l)}^{n}\:.
\eeq
\end{itemize}

Now for the specific case $i=2$ we will need a more refined analysis of our families of tiles. For this reason, we need to define the following
\begin{itemize}
\item using \eqref{Fmasdectree1}, we let
\beq\label{klmtree}
\p_{k,(l),a,b}^{2,n,(m)}[R]:=  \p_{k,(l)}^2[R]\cap \p_{k,(l),a,b}^{2,n,(m)}\:;
\eeq
\item using \eqref{mnoptree} and then the analogue of \eqref{compllev1}, we let
\beq\label{klmtreenpt}
\p_{k,(l),a,b}^{2,n,(m),[p]}[R]:=  \p_{k,(l)}^2[R]\cap \p_{(l),a,b}^{n,(m),[p]}\:.
\eeq
and
\beq\label{kklmtreenpt}
\bar{\p}_{k,(l),a,b}^{2,n,(m),[p]}[R]:=  \p_{k,(l)}^2[R]\cap \bar{\p}_{(l),a,b}^{n,(m),[p]}\:.
\eeq
\item finally, using \eqref{topcompllevv1}, we let
\beq\label{klmtreenptst1}
\p_{k,(l)}^{2,n,[p]}[R]:=  \p_{k,(l)}^2[R]\cap \p_{(l),*}^{n,[p]}\:\:\textrm{and}\:\:\p_{k}^{2,n,[p]}[R]:=  \bigcup_{l}\p_{k,(l)}^{2,n,[p]}[R]\:,
\eeq
and
\beq\label{klmtreenptst2}
\bar{\p}_{k,(l)}^{2,n,[p]}[R]:=  \p_{k,(l)}^2[R]\cap \bar{\p}_{(l),*}^{n,[p]}\:\:\textrm{and}\:\:\bar{\p}_{k}^{2,n,[p]}[R]:=  \bigcup_{l}\bar{\p}_{k,(l)}^{2,n,[p]}[R]\:.
\eeq
\end{itemize}

With this we set $\p_{k}^{i,n}[R]:=\p_{k}^{i}[R]\cap\P_n$ and notice that
\beq\label{klmtreenn0}
\p_{k}^{i,n}[R]= \bigcup_{l} \p_{k,(l)}^{i,n}[R]\:.
\eeq
In the case $i=2$, we further have
\beq\label{klmtreenn}
\p_{k}^{2,n}[R]= \bigcup_{l,p}\bar{\p}_{k,(l)}^{2,n,[p]}[R]= \bigcup_{l,m,p}\bar{\p}_{k,(l),a,b}^{2,n,(m),[p]}[R]\:.
\eeq

Throughout the paper, we only retain those trees  $\p_{k,(l),a,b}^{2,n,(m),[p]}[R]$ and families $\bar{\p}_{k,(l)}^{2,n,[p]}[R]$, $\bar{\p}_{k,(l),a,b}^{2,n,(m),[p]}[R]$  that are \emph{non-empty}.

Finally, we set\footnote{For $i=1$ the summation in \eqref{tottree} ranges over $R\in\r_{k}^{0}$ while for $i=2$ the summation is over $R\in\r_{k}^{>0}$.}
\beq\label{tottree}
\p_{k,(l)}^{i,n}:=\bigcup_{R\in\r_{k}^{(\cdot)}} \p_{k,(l)}^{i,n}[R]\,,
\eeq
and notice that all the tiles in $\p_{k,(l)}^{1,n}$ have (roughly) uniform $F$-mass $\approx 2^{-k}$.

\section{Preparatives; subdiving the Main Theorem B, part i)}\label{prep}

\subsection{Operator Discretization - The second stage}

With these facts, making use of one more observation:
$$\forall\:I\:\textrm{interval}\:I\subset \bar{\I}_{k+1}\:\textrm{and}\:I \cap\bar {\I}_{k}=\emptyset\:\:\Rightarrow\:\:|I\cap F|=0\:,$$
we conclude that for any $g\in L^{\infty}(\TT)$ with $\|g\|_{\infty}=1$ one has

\beq\label{partition}
\begin{array}{rl}
\|\sum_{P\in\P} T_P^{*}(g)\|_{L^1(F)}&\leq \|\sum_{P\in\P_{cluster}} T_P^{*}(g)\|_{L^1(F)}\\\\
&+\,\|\sum_{R\in\r_1}\chi_{I_{R}}{T^{\p_{1}^1[R]\cup\p_{1}^2[R]}}^{*}(g)\|_{L^1(F)}\\\\
&+\,\|\sum_{1<k\leq k_{F}}\sum_{R\in\r_{k}^{0}}\chi_{I_{R}}{T^{\p_k^1[R]}}^{*}(g)\|_{L^1(F)}\\\\
&+\,\|\sum_{1<k\leq k_{F}}\sum_{R\in\r_{k}^{>0}}\chi_{I_{R}}{T^{\p_k^2[R]}}^{*}(g)\|_{L^1(F)}\;.
\end{array}
\eeq

\subsection{Subdiving the Main Theorem B, part i): the key statements}

The proof of our Main Theorem B i) follows immediately from the following four theorems:

\begin{t1}\label{clust} Let $F\subseteq\TT$ be a measurable set and $g\in L^{\infty}(\TT)$. Then
\beq\label{clusts}
\|(T^{\P_{cluster}})^{*}(g)\|_{L^1(F)}\lesssim |F|\,\log(\frac{4}{|F|})\,\|g\|_{\infty}\:.
\eeq
\end{t1}

The proof of Theorem \ref{clust} is trivial if one uses the key observation that $\P_{cluster}$ is just a $c(\a)-$dilation of a tree where here $c(\a)$ is the same as the one appearing in \eqref{clustertile}. Indeed, heuristically, the proof reduces to the fact that the (maximal) Hilbert transform is bounded from $L\log L$ to $L^{1}$. We leave the details for the reader.

\begin{t1}\label{ortoF1} Let $F\subseteq\TT$ be a measurable set and $g\in L^{\infty}(\TT)$. Then
\beq\label{ortogs}
\left\|\sum_{R\in\r_1}\chi_{I_{R}}{T^{\p_{1}^1[R]\cup\p_{1}^2[R]}}^{*}(g)\right\|_{L^1(F)}\lesssim |F|\,\log(\frac{4}{|F|})\,\|g\|_{\infty}\:.
\eeq
\end{t1}

\begin{t1}\label{noosc} Let $F\subseteq\TT$ be a measurable set and $g\in L^{\infty}(\TT)$. Then
\beq\label{nooscs}
\left\|\sum_{1<k\leq k_{F}}\sum_{R\in\r_{k}^{0}}\chi_{I_{R}}{T^{\p_k^1[R]}}^{*}(g)\right\|_{L^1(F)}\lesssim |F|\,\log(\frac{4}{|F|})\,\|g\|_{\infty}\:.
\eeq
\end{t1}

\begin{t1}\label{ortog} Let $F\subseteq\TT$ be a measurable set and $g\in L^{\infty}(\TT)$. Then
\beq\label{ortogs}
\left\|\sum_{1<k\leq k_{F}}\sum_{R\in\r_{k}^{>0}}\chi_{I_{R}}{T^{\p_k^2[R]}}^{*}(g)\right\|_{L^1(F)}\lesssim |F|\,\log(\frac{4}{|F|})\,\|g\|_{\infty}\:.
\eeq
\end{t1}

\subsection{The Main Lemma}

In order to prove the above theorems we will rely on the following key result:
$\newline$

\noindent\textbf{Main Lemma}\label{Key} \textit{Let $n\in\N$ and $k\in\{1,\ldots, k_{F}\}$ be fixed parameters. Assume we are given a tree $\p$ with top $\bar{P}$. Further, let $I_{\p}:=\tilde{I}_{\bar{P}}$ and let $\I_{\p}$ be a collection of disjoint dyadic intervals satisfying the following properties:
\begin{itemize}
\item for any $P\in\p$ we have a uniform control on the mass parameter:
\beq\label{masss}
\frac{|E(P)|}{|I_{P}|}\leq 2^{-n}\,.
\eeq
\item let
\beq\label{ipmin}
\I_{\p,min}^{*}\,,
\eeq
be the collection of minimum time intervals inside the set $\{I_{P*}^r\}_{{P\in\p}\atop{r\in\{1,\ldots,14\}}}\,.$
If we denote with $CZ^{*}(I_{\p})$ the Calderon-Zygmund decomposition of the set $I_{\p}$ relative to the set $\I_{\p,min}^{*}$ the following hold:
\begin{itemize}
\item for any $I\in \I_{\p}$ and any $J\in CZ^{*}(I_{\p})$
\beq\label{cond}
\textrm{either}\:I\cap J=\emptyset\:\:\textrm{or}\:\:I\subseteq J\,;
\eeq
\item for each $I\in \I_{\p}$ there exists $k(I)\in \{1,\ldots, k_{F}\}$ such that for any $J\in CZ^{*}(I_{\p})$
one has
\beq\label{carl}
\sum_{{I\in\I_{\p}}\atop{I\subseteq J}} 2^{-k(I)}\,|I|\lesssim 2^{-k} |J|\,.
\eeq
\end{itemize}
\end{itemize}
Then, the following holds:
\beq\label{key}
\sum_{I\in\I_{\p}}\,2^{-k(I)}\,\int_{I} |T^{\p^*}(g)|^2\lesssim 2^{-k}\, 2^{-2n}\,|I_{\p}|\,\|g\|^2_{\infty}\,.
\eeq}

\begin{proof}

We start by noticing that without loss of generality by shifting the tree at zero frequency, we can assume $\o_{Top(\p)}=0$.

Define now the function
\beq\label{h}
h:=\sum_{I\in\I_{\p}}\,2^{-k(I)}\,\chi_{I}\,.
\eeq

Then, relation \eqref{key} is equivalent with showing that for any $f,\,g$ real valued functions with $f\in L^2(\TT)$ and  $g\in L^{\infty}(\TT)$, one has
\beq\label{equiv}
|\int f\,h^{\frac{1}{2}}\,T^{\p^*} g\,|\lesssim 2^{-\frac{k}{2}}\,2^{-n}\,|I_{\p}|^{\frac{1}{2}}\,\|g\|_{\infty}\,\|f\|_2\,.
\eeq

Given a generic $u\in L^1(\TT)$ and $I\subseteq\TT$ dyadic interval we set
\beq\label{proj}
\L_{I}(u):=\frac{\int_{I}u(s)\,ds}{|I|}\chi_I\;.
\eeq
Using now \eqref{proj} we define
\beq\label{lfunct}
\LL_{\p}(u):=\sum_{J\in CZ^{*}(I_{\p})}\L_{J}(u)\,.
\eeq
With these definitions we now have
\beq\label{V}
\eeq
$$V_{\p}:=\int f\,h^{\frac{1}{2}}\,T^{\p^*} g=\int T^{\p}(f\,h^{\frac{1}{2}})\,g$$
$$=\int T^{\p} (f\,h^{\frac{1}{2}}-\LL_{\p}(f\,h^{\frac{1}{2}}))\,g\,+\,\int T^{\p} (\LL_{\p}(f\,h^{\frac{1}{2}}))\,g
=:\,A\,+\,B\:.$$

Before passing to the proof we record the following

\begin{obs}\label{totmass}
If $\p$ is a collection of tiles, we set
\beq\label{tms}
E(\p):=\bigcup_{P\in\p}E(P)\,.
\eeq
If $\p\subset\P_n$ is a tree with top $\bar{P}$ and $I_{\p}:=\tilde{I}_{\bar{P}}$,  then one has
\beq\label{tmss}
|E(\p)|\lesssim 2^{-n}\,|I_{\p}|\,.
\eeq
\end{obs}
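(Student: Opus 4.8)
The plan is to reduce the union $E(\p)=\bigcup_{P\in\p}E(P)$ (see \eqref{tms}) to the single set $E(\bar P)$ and then invoke the uniform mass bound at the top of the tree. The ingredient that makes this collapse possible is an elementary feature of $\P_{sep}$: by the choice of the constant $c(\a)$ in \eqref{clustertile} together with the lacunarity relation \eqref{lacadv}, every $P\in\P_{sep}$ has the property that $\o_P$ contains \emph{exactly one} element of $\{n_j\}_j$ --- indeed, the $c(\a)$-dilation in \eqref{clustertile} absorbs into $\P_{cluster}$ precisely those tiles whose frequency interval is comparable in size to its distance to $0$, so that on $\P_{sep}$ the neighbouring lacunary frequencies are automatically farther from $n_j\in\o_P$ than the whole length $|\o_P|$. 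Since $N$ takes values in $\{n_j\}_j$, this gives, for such a $P$, $E(P)=\{x\in I_P\,:\,N(x)\in\o_P\}=\{x\in I_P\,:\,N(x)=n_{j(P)}\}$, where $n_{j(P)}$ is that unique frequency.

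Next I would use the tree structure. Every $P\in\p$ satisfies $P\leq\bar P$, hence $I_P\subseteq I_{\bar P}$ and $\o_P\supseteq\o_{\bar P}$; thus the $\o_P$ are a chain of nested dyadic intervals, and since each carries exactly one $n_j$, that frequency is the same for all $P\in\p$ --- call it $n_l$ (it is the unique $n_j$ lying in the smallest of the $\o_P$'s, which is $\o_{\bar P}$ when $\bar P\in\p$). Consequently, using also $\o_{\bar P}\cap\{n_j\}_j=\{n_l\}$,
\[
E(\p)=\bigcup_{P\in\p}\{x\in I_P\,:\,N(x)=n_l\}=\{x\,:\,N(x)=n_l\}\cap\bigcup_{P\in\p}I_P\subseteq\{x\in I_{\bar P}\,:\,N(x)=n_l\}=E(\bar P).
\]
Finally, since $\bar P\in\p\subset\P_n$, relation \eqref{Pn} gives $A(\bar P)\leq 2^{-n}$, and taking $P'=\bar P$ in the supremum \eqref{v1} defining $A(\bar P)$ yields $|E(\bar P)|\leq A(\bar P)\,|I_{\bar P}|\leq 2^{-n}|I_{\bar P}|$. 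As $|I_\p|=|\tilde I_{\bar P}|=17\,|I_{\bar P}|$, we obtain $|E(\p)|\leq|E(\bar P)|\leq 2^{-n}|I_{\bar P}|=\tfrac1{17}\,2^{-n}|I_\p|\lesssim 2^{-n}|I_\p|$, i.e.\ \eqref{tmss}. (If one does not wish to assume that the top $\bar P$ itself belongs to $\p$, the same conclusion follows by replacing $I_{\bar P}$ with the pairwise disjoint maximal time intervals $\{I_Q\}_{Q\in\p}$ of $\p$: for each such $Q$ one has $\{x\in I_Q:N(x)=n_l\}=E(Q)$ with $|E(Q)|\leq 2^{-n}|I_Q|$ by $Q\in\P_n$, and one sums, the union $\bigcup I_Q$ being contained in $I_{\bar P}$.)

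The only step with genuine content is the ``one $n_j$ per $\o_P$'' assertion for $P\in\P_{sep}$; once that is in place the rest is bookkeeping and I anticipate no obstacle. This observation enters only as a small auxiliary ingredient in the proof of the Main Lemma --- the substance there lies in the estimation of the terms $A$ and $B$ of the splitting \eqref{V}, not in the bound for $|E(\p)|$.
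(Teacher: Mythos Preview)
Your argument is correct. The paper states Observation~\ref{totmass} without proof, treating it as evident from the construction; your write-up supplies exactly the reasoning one would expect. The key structural fact you isolate---that each $P\in\P_{sep}$ has $\o_P\cap\{n_j\}_j$ a singleton, so that all tiles of a tree in $\P_{sep}$ share a common frequency $n_l$ and hence $E(\p)\subseteq E(\bar P)$ (or, when $\bar P\notin\p$, $E(\p)\subseteq\bigcup_{Q\text{ max}}E(Q)$ with the $I_Q$ disjoint)---is precisely what underlies the decomposition \eqref{ltdec} later in the paper, where $\P_{sep}$ is split into trees $\p_{(l)}$ living at the single frequencies $\o_l$. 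Your parenthetical handling of the case $\bar P\notin\p$ via maximal tiles is also sound: maximality forces the $I_Q$ to be pairwise disjoint (two comparable tiles in a tree at a common frequency chain are $\leq$-comparable), and each $|E(Q)|\le 2^{-n}|I_Q|$ by \eqref{Pn}--\eqref{v1}.
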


We are now ready to pass to our proof; we will start with the analysis of the last term:
\beq\label{carll1}
\eeq
$$|B|\lesssim \|g\|_{\infty}\,|E(\p)|^{\frac{1}{2}}\,\|T^{\p} (\LL_{\p}(f\,h^{\frac{1}{2}}))\|_{2}$$
$$\lesssim \|g\|_{\infty}\,|E(\p)|^{\frac{1}{2}}\,2^{-\frac{n}{2}}\,\|\LL_{\p}(f\,h^{\frac{1}{2}}))\|_2 $$
$$\lesssim \|g\|_{\infty}\,2^{-n}\,|I_{\p}|^{\frac{1}{2}}\,\left(\sum_{J\in CZ(\p^*)}\,\left(\frac{\int_J f\,h^{\frac{1}{2}}}{|J|}\right)^2\,|J|\right)^{\frac{1}{2}}$$
$$\lesssim_{CS} \|g\|_{\infty}\,2^{-n}\,|I_{\p}|^{\frac{1}{2}}\,\left(\sum_{J\in CZ(\p^*)}\,\frac{\int_J |f|^2\,
\int_{J} h}{|J|}\right)^{\frac{1}{2}}\;.$$
At this point we notice that \eqref{carl} implies
\beq\label{carl1}
\frac{\int_{J} h}{|J|}\lesssim 2^{-k}\:\:\:\:\:\textrm{for any}\:J\in CZ(\p^*)\:.
\eeq
Inserting now \eqref{carl1} in the last inequality  and using the mutual disjointness of the intervals in $ CZ(\p^*)$ we conclude
\beq\label{B1}
B\lesssim 2^{-n}\,2^{-\frac{k}{2}}\,|I_{\p}|^{\frac{1}{2}}\,\|g\|_{\infty}\,\|f\|_2\:.
\eeq

We pass now to the treatment of $A$.

Following now \cite{lvCarl1} and \cite{lvKony1}, we use the following key relation
\beq\label{meanzero}
\int_{J}[u-\LL_{\p}(u)]=0\:\:\:\:\:\:\:\forall\:J\in CZ(\p^*)\,.
\eeq
Using this, we deduce that
\beq\label{A1}
A= \int (f\,h^{\frac{1}{2}}-\LL_{\p}(f\,h^{\frac{1}{2}}))\,(T^{\p^*}g-\LL_{\p}(T^{\p^*}g))\:.
\eeq
Once in this point, we recall the following property proved in \cite{lvKony1}: for any $J\in CZ(\p^*)$ and $x\in J$ fixed we have
\beq \label{est}
\left|{T^{\p}}^{*}g(x)-\frac{1}{|J|}\int_{J}{T^{\p}}^{*}g(s)ds\right|\,\chi_{J}
\lesssim
\|g\|_{\infty}\,\frac{\chi_{J}}{|J|}\sum_{I_{P^*}\supseteq J}\frac{|J|^2}{|I_P|^2}\, |E(P)|\;.
\eeq
For completeness, we recall the proof of \eqref{est}: assuming $x\in J$ we have
$$\left|{T^{\p}}^{*}g(x)-\frac{1}{|J|}\int_{J}{T^{\p}}^{*}g(s)ds\right|=$$
$$\left|\frac{1}{|J|}\int_{J}\left\{\sum_{{P\in\p}\atop{|I_P|\geq
|J|}}\int_{\TT}\left[\v_k(x-y)-\v_k(s-y)\right]g(y)\chi_{E(P)}(y)dy\right\}ds\right|$$
$$\lesssim\frac{1}{|J|}\int_{J}\left\{\sum_{I_{P^*}\supseteq J} |I_P|^{-1}\,|J|\frac{\int_{E(P)}|g|}{|I_P|}\right\}ds\:.$$
Returning now to \eqref{A1} and making use of \eqref{est} we have
\beq \label{A2}
\eeq
$$|A|\lesssim \|g\|_{\infty}\,\sum_{J\in CZ(\p^*)} \frac{1}{|J|}\,
\int_{J}|f\,h^{\frac{1}{2}}-\LL_{\p}(f\,h^{\frac{1}{2}})|\,\sum_{I_{P^*}\supseteq J}\frac{|J|^2}{|I_P|^2}\,|E(P)|$$
$$\lesssim \|g\|_{\infty}\,2^{-n}\,\sum_{J\in CZ(\p^*)} |J|^{\frac{1}{2}}\,(\int_J |f|^2)^{\frac{1}{2}}\,
(\frac{\int_{J}h}{|J|})^{\frac{1}{2}}\,\sum_{I_{P^*}\supseteq J}\frac{|J|}{|I_{P}|}$$
$$\lesssim \|g\|_{\infty}\,2^{-n}\,2^{-\frac{k}{2}}\,\sum_{J\in CZ(\p^*)} |J|^{\frac{1}{2}}\,(\int_J |f|^2)^{\frac{1}{2}}
\lesssim_{CS} 2^{-n}\,2^{-\frac{k}{2}}\,|I_{\p}|^{\frac{1}{2}}\,\|g\|_{\infty}\,\|f\|_2\:.$$

This ends the proof of our Main Lemma.
\end{proof}

\begin{cor}\label{rowcont} Let $\bar{\p}_{(l),a,b}^{n, (m),[p]}$ as given by \eqref{topcompllevv1} via \eqref{compllevv1}.

Then, denoting with $I_{\p_{(l),a,b}^{n,(m),[p]}}$ the spacial interval of the top of the tree $\p_{(l),a,b}^{n, (m),[p]}$, we have that
\beq \label{bdrow}
\|{T^{\bar{\p}_{(l),a,b}^{n, (m),[p]}}}^{*}(g)\|_{L^2(F)}\lesssim 2^{-n}\,(2^{-m}\,|I_{\p_{(l),a,b}^{n,(m),[p]}}|)^{\frac{1}{2}}\,\|g\|_{\infty}\:.
\eeq
\end{cor}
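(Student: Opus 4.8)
The plan is to obtain \eqref{bdrow} directly from the Main Lemma, applied to $\p:=\bar{\p}_{(l),a,b}^{n,(m),[p]}$ with the same $n$, with $k:=m$, and with the trivial weight $k(I)\equiv 1$. First one sets up the data of the lemma. Let $\bar P$ be the top of the anchor tree $\p_{(l),a,b}^{n,(m),[p]}$, so that $I_{\bar P}=I_{\p_{(l),a,b}^{n,(m),[p]}}$, and let $I_{\p}$ be a dyadic interval of scale comparable to $|I_{\p_{(l),a,b}^{n,(m),[p]}}|$ that contains $\bigcup_{P\in\p}I_{P^{*}}$ (this is possible since every tree entering $\bar{\p}_{(l),a,b}^{n,(m),[p]}$ via \eqref{compllevv1} is spacially subordinate to $\tilde I_{\bar P}$); in particular $|I_{\p}|\approx|I_{\p_{(l),a,b}^{n,(m),[p]}}|$ and $\operatorname{supp}T^{\p^{*}}(g)\subseteq\bigcup_{P\in\p}I_{P^{*}}\subseteq I_{\p}$. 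Since each constituent tree of $\p$ lies in $\P_{n}$, the uniform mass bound \eqref{masss} holds for all $P\in\p$; moreover the total--mass estimate $|E(\p)|\lesssim 2^{-n}|I_{\p}|$ (the row analogue of Observation~\ref{totmass}, which uses that the constituent trees all live at the fixed frequency $\o_{l}$ and that the tops of those of a given $F$-mass are pairwise disjoint) is available. These are exactly the geometric features of $\p$ that the proof of the Main Lemma relies on, so its conclusion \eqref{key} is at our disposal for $\p$.

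Next one builds the collection $\I_{\p}$. Let $CZ^{*}(I_{\p})$ be the Calderon--Zygmund decomposition of $I_{\p}$ relative to $\I_{\p,\min}^{*}$. For each $J\in CZ^{*}(I_{\p})$ that meets $\bigcup_{P\in\p}I_{P^{*}}$ put into $\I_{\p}$ the maximal dyadic subintervals $I\subseteq J$ with $|F\cap I|>\tfrac1{100}|I|$, and include nothing from the other $J$'s (on which $T^{\p^{*}}(g)$ vanishes identically). Then $\I_{\p}$ is a disjoint family, each of its members lies in a unique $J\in CZ^{*}(I_{\p})$ -- so \eqref{cond} holds with $k(I)\equiv 1$ -- and, by the Lebesgue differentiation theorem, $\bigcup_{I\in\I_{\p}}I$ covers $F\cap\bigcup_{P\in\p}I_{P^{*}}$ up to a null set. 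Condition \eqref{carl} with $k(I)\equiv 1$ and $k=m$ reads $\sum_{I\in\I_{\p},\,I\subseteq J}|I|\lesssim 2^{-m}|J|$ for all $J\in CZ^{*}(I_{\p})$; it is vacuous for the $J$'s meeting no $I_{P^{*}}$, while for the remaining $J$'s the left side is $\le 100\,|F\cap J|$, so it reduces to the $F$-density bound $|F\cap J|\lesssim 2^{-m}|J|$ on those $J$. This in turn follows from \eqref{suppPstar}/\eqref{cz} once one knows the structural fact that every constituent tree of $\bar{\p}_{(l),a,b}^{n,(m),[p]}$ carries $F$-mass at most $2^{-m}$, i.e. lies in $\bigcup_{k\ge m}\P^{k}$ -- a property of the tree $*$-foliation of Section~\ref{foltree} and of the set-resolution of Section~\ref{tffolF}.

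Granting the verification of the hypotheses, the Main Lemma gives $\tfrac12\sum_{I\in\I_{\p}}\int_{I}|T^{\p^{*}}(g)|^{2}\lesssim 2^{-m}\,2^{-2n}\,|I_{\p}|\,\|g\|_{\infty}^{2}$; since $T^{\p^{*}}(g)$ is supported in $\bigcup_{P\in\p}I_{P^{*}}$ and $\bigcup_{I\in\I_{\p}}I\supseteq F\cap\bigcup_{P\in\p}I_{P^{*}}$ almost everywhere, the left side is $\ge\tfrac12\int_{F}|T^{\p^{*}}(g)|^{2}$, and taking square roots together with $|I_{\p}|\approx|I_{\p_{(l),a,b}^{n,(m),[p]}}|$ yields \eqref{bdrow}. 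I expect the main obstacle to be not the invocation of the lemma but the two structural inputs it consumes: the row version of the total--mass bound $|E(\p)|\lesssim 2^{-n}|I_{\p}|$ for the completion family $\bar{\p}_{(l),a,b}^{n,(m),[p]}$, and the bookkeeping showing that all of its constituent trees have $F$-mass $\le 2^{-m}$ so that \eqref{carl} holds with the trivial weight -- the latter being precisely the point where the simultaneous treatment of tiles with distinct $F$-mass parameters, advertised in the introduction, is genuinely needed, and where the properties listed at the end of Section~\ref{propTFR} and the $*$-foliation of Section~\ref{foltree} must be combined.
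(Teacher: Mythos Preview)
Your approach is essentially the paper's: both rerun the Main Lemma's argument for the completion family $\bar{\p}_{(l),a,b}^{n,(m),[p]}$, treating it as a row rather than a single tree, with the two new ingredients being the row $L^{2}$-bound $\|T^{\bar{\p}_{(l),a,b}^{n,(m),[p]}}\|_{2\to2}\lesssim 2^{-n/2}$ (the paper records this as \eqref{Vvs}, derived from \eqref{bdroww}--\eqref{bdroww1} and the single-tree mass estimate) and the $F$-density bound $|F\cap J|/|J|\lesssim 2^{-m}$ on the row's Calder\'on--Zygmund intervals (the paper's \eqref{carlgl1}). Your packaging via the choice $k(I)\equiv1$, $k:=m$, and the Whitney-type covers $\I_{\p}$ of $F\cap\operatorname{supp}T^{\p^{*}}g$ is a clean way to feed the row directly into \eqref{key}; the paper instead writes out the duality splitting $V_{row}=A+B$ explicitly, but the content is the same.

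One correction to your closing paragraph: the ``bookkeeping'' you anticipate is not needed here. By the very definition \eqref{compllevv1}, every tree $\p_{(l),a',b'}^{n,(m')}$ entering $\bar{\p}_{(l),a,b}^{n,(m),[p]}$ has $m'=m$; the $F$-mass parameter is \emph{fixed} across the completion family. Hence all tiles lie in $\P^{m}$, and the density bound $|F\cap J|\lesssim 2^{-m}|J|$ on the row CZ intervals follows directly from \eqref{cz}/\eqref{suppPstar} as in Observation~\ref{mascont}(2). The ``simultaneous treatment of tiles with distinct $F$-mass parameters'' advertised in the introduction is not what is at stake in this corollary --- that enters only later, in Section~\ref{mainthm}, through the $E$-mass versus $F$-mass dichotomy.
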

\begin{proof}
First of all, let us notice that from the very definition of $\bar{\p}_{(l),a,b}^{n, (m),[p]}$ we have that this family of tiles can be written as
\beq \label{bdroww}
\bar{\p}_{(l),a,b}^{n, (m),[p]}=\bigcup_{\p_{l,c}^{n, (m)}\in\bar{\p}_{(l),a,b}^{n, (m),[p]}} \p_{l,c}^{n, (m)}\:,
\eeq
with $\{\p_{l,c}^{n, (m)}\}$ maximal disjoint tress of uniform $F-$mass $\approx 2^{-m}$ and mass $\approx 2^{-n}$ and such that the corresponding extending tops obey the relation
\beq \label{bdroww1}
\sum_{\p_{l,c}^{n, (m)}\in\bar{\p}_{(l),a,b}^{n, (m),[p]}} |\tilde{I}_{\p_{l,c}^{n, (m)}}|\lesssim |I_{\p_{(l),a,b}^{n,(m),[p]}}|\:.
\eeq

As in the proof of the Main Lemma, we will use duality theory. Thus, taking $f\in L^2(\TT)$ with $\|f\|_2=1$, we deduce that
\eqref{bdrow} is equivalent with
\beq \label{bdrow1}
\eeq
$$\left|\,\int \left(\sum_{\p_{l,c}^{n, (m)}\in\bar{\p}_{(l),a,b}^{n, (m),[p]}} T^{\p_{l,c}^{n, (m)}}(f\,\chi_{F})\right)\,g\,\right|$$
$$\lesssim 2^{-n}\,\left(2^{-m}\,|\tilde{I}_{\p_{(l),a,b}^{n,(m),[p]}}|\right)^{\frac{1}{2}}\,\|f\|_2\,\|g\|_{\infty}\:.$$
Let
$$\I_{min}^{*}\,,$$
 be the collection of minimum time intervals inside the set $\{I_{P*}^r\}_{{P\in\bar{\p}_{(l),a,b}^{n, (m),[p]}}\atop{r\in\{1,\ldots,14\}}}\,,$
and let $\tilde{I}_{row}=\bigcup_{P\in \bar{\p}_{(l),a,b}^{n, (m),[p]}}\tilde{I}_P$. Define now
\beq \label{bdrow11}
CZ^{*}(\tilde{I}_{row})\,,
\eeq
be the Calderon-Zygmund decomposition of $\tilde{I}_{row}$ relative to the collection of intervals $\I_{min}^{*}$.

We now define as in the Main Lemma (see \eqref{lfunct}), the function
\beq\label{lfunctt}
\LL_{row}(u):=\sum_{J\in CZ^{*}(\tilde{I}_{row})}\L_{J}(u)\,.
\eeq
Following the same circle of ideas as before, we have the analogue of \eqref{V} given by

\beq\label{Vv}
\eeq
$$V_{row}:=\int T^{\bar{\p}_{(l),a,b}^{n, (m),[p]}}(f\,\chi_{F})\,g$$
$$=\int T^{\bar{\p}_{(l),a,b}^{n, (m),[p]}} (f\,\chi_{F}-\LL_{row}(f\,\chi_F))\,g\,+\,\int T^{\bar{\p}_{(l),a,b}^{n, (m),[p]}} (\LL_{row}(f\,\chi_{F}))\,g$$
$$=A\,+\,B\:.$$

Using now \eqref{bdroww} and \eqref{bdroww1} together with the mass single tree estimate that can be deduced from Observation \ref{mascont} point 3) we deduce that
\beq\label{Vvs}
\|T^{\bar{\p}_{(l),a,b}^{n, (m),[p]}}\|_{2\rightarrow 2}\lesssim 2^{-\frac{n}{2}}\,.
\eeq
Also, notice that one has a natural correspondent to the key estimate \eqref{carl1} in the following
\beq\label{carlgl1}
\|\LL_{row}(f\,\chi_{F})\|_{L^2(J)}\lesssim 2^{-\frac{m}{2}}\,\|f\|_{L^2(J)}\:\:\:\:\:\textrm{for any}\:J\in CZ(\p^*)\:.
\eeq

With this, one can now follow the same strategy as in the Main Lemma for the corresponding terms $A$ and $B$, see \eqref{A2} and  \eqref{carll1}, respectively.

\end{proof}

We now start the proof our Main Theorem B i) by approaching our theorems stated at the beginning of this section in their increasing order of difficulty. For simplicity, wlog we will assume throughout the entire paper that $\|g\|_{\infty}=1$.
$\newline$

\section{Proof of Theorem \ref{ortoF1}.}\label{pfth1}

This result follows easily by exploiting
\begin{itemize}
\item the almost orthogonality of the tiles within the set $\{\p_{1}^1[R]\}_{R\in\r_1}$ once that the integration is extended to the full torus spacial support $[0,1]$;

\item the almost orthogonality of the family of tiles within the set $\{\p_{1}^2[R]\}_{R\in\r_1}$ this time exploiting the well separated frequencies relative to the spacial support represented by the set $\I_1$;
\end{itemize}
Define now
\beq\label{ortogs1}
\textbf{I}_{1,F}^{1}:=\|\sum_{R\in\r_1}\chi_{I_{R}}{T^{\p_{1}^1[R]}}^{*}(g)\|_{L^1(F)}\,,
\eeq
and
\beq\label{ortogs2}
\textbf{I}_{1,F}^{2}:=\|\sum_{R\in\r_1}\chi_{I_{R}}{T^{\p_{1}^2[R]}}^{*}(g)\|_{L^1(F)}\,,
\eeq
and recall that our goal is to show that:
\beq\label{ortogs3}
\textbf{I}_{1,F}:=\textbf{I}_{1,F}^{1}+\textbf{I}_{1,F}^{2}\lesssim |F|\,\log(\frac{4}{|F|})\,\|g\|_{\infty}\:.
\eeq

Using now notation \eqref{klmtreenn} and Cauchy-Schwarz we have:

$$\textbf{I}_{1,F}=\textbf{I}_{1,F}^{1}+\textbf{I}_{1,F}^{2}$$
$$\leq \sum_{n\in\N}\|\sum_{R\in\r_1}\chi_{I_{R}}{T^{\p_1^{1,n}[R]}}^{*}(g)\|_{L^1(F)}+\sum_{n\in\N}\|\sum_{R\in\r_1}\chi_{I_{R}}{T^{\p_1^{2,n}[R]}}^{*}(g)\|_{L^1(F)}$$ $$\lesssim|F|^{\frac{1}{2}}\,\sum_{n\in\N}\|\sum_{R\in\r_1}\chi_{I_{R}}{T^{\p_1^{1,n}[R]}}^{*}(g)\|_{L^2}
+|F|^{\frac{1}{2}}\,\sum_{n\in\N}\|\sum_{R\in\r_1}\chi_{I_{R}}{T^{\p_1^{2,n}[R]}}^{*}(g)\|_{L^2}$$

Next, appealing to \eqref{klmtree}, \eqref{klmtreen} and the definition of $\p_1^{1,n}[R]$ for $R\in\r_1$, we apply Cauchy-Schwarz and the standard $L^2$-bound for uniform mass tree of order $\approx 2^{-n}$ following from \eqref{l2dec}, and deduce
$$\textbf{I}_{1,F}^{1,n}:=\|\sum_{R\in\r_1}\chi_{I_{R}}{T^{\p_1^{1,n}[R]}}^{*}(g)\|_{L^2}$$
$$\leq\|\sum_{{l,a,b}\atop{m\in\{1,2\}}} {T^{\p_{1,(l),a,b}^{1,n,(m)}}}^{*}(g)\|_{L^2(\bar{\I}_2)}
\lesssim \|(\sum_{{l,a,b}\atop{m\in\{1,2\}}}|{T^{\p_{1,(l),a,b}^{1,n,(m)}}}^{*}(g)|^2)^{\frac{1}{2}}\|_{L^2(\bar{\I}_2)}\lesssim 2^{-\frac{n}{2}}\,|\bar{\I}_2|^{\frac{1}{2}}\;.$$
Thus we deduce that
\beq \label{term1es}
\textbf{I}_{1,F}^{1}\lesssim |F|^{\frac{1}{2}}\sum_{n\in\N}2^{-\frac{n}{2}}\,|\bar{\I}_2|^{\frac{1}{2}}\lesssim |F|\:.
\eeq
In a similar fashion, using \eqref{klmtreenpt} - \eqref{klmtreenptst2}, for the second term we have:
$$\textbf{I}_{1,F}^{2,n}:=\|\sum_{R\in\r_1}\chi_{I_{R}}{T^{\p_1^{2,n}[R]}}^{*}(g)\|_{L^2}$$
$$\lesssim\sum_{p}\|\sum_{R\in\r_1}\chi_{I_{R}}{T^{\bar{\p}_1^{2,n,[p]}[R]}}^{*}(g)\|_{L^2}$$
$$\lesssim \sum_{p}\left(\sum_{R\in\r_1}\|\sum_{l}|{T^{\bar{\p}_{1,(l)}^{2,n,[p]}[R]}}^{*}(g)|^2\|_{L^1(I_R)}\right)^{\frac{1}{2}}$$
$$\lesssim \sum_{p}\|(\sum_{l,m,a,b}|{T^{\bar{\p}_{(l),a,b}^{n,(m),[p]}}}^{*}(g)|^2)^{\frac{1}{2}}\|_{L^2(\bar{\I}_1)}\;.$$

Applying now Main Lemma and Cauchy-Schwarz, we have
\beq \label{term1es1}
\eeq
$$\textbf{I}_{1,F}^{2}\lesssim |F|^{\frac{1}{2}}\,\sum_{n\in\N} \sum_{p} \left(\sum_{l,m,a,b}\, 2^{-2n}\,2^{-m}\,|\tilde{I}_{\p_{(l),a,b}^{n,(m),[p]}}|\right)^{\frac{1}{2}}$$
$$\lesssim |F|^{\frac{1}{2}}\,k_{F}^{\frac{1}{2}}\, \sum_{n\in\N} 2^{-\frac{n}{2}}\,\left(\sum_{l,a,b}\sum_{m\leq k_{F}}\, 2^{-n}\,2^{-m}\,|I_{\p_{(l),a,b}^{n,[m]}}|\right)^{\frac{1}{2}}$$
$$\lesssim |F|^{\frac{1}{2}}\,k_{F}^{\frac{1}{2}}\,\left(\sum_{n\in\N}2^{-\frac{n}{2}}\right)\,k_{F}^{\frac{1}{2}}\,|F|^{\frac{1}{2}}\lesssim |F|\,\log\frac{4}{|F|}\:.$$

Combining now \eqref{term1es} and \eqref{term1es1}, we conclude that \eqref{ortogs3} holds.

\section{Proof of Theorem \ref{noosc}.}\label{thL1}

Our approach here is similar in spirit with the one embraced for proving Proposition 3 in \cite{lvKony1}. For convenience we will provide all the required details within this section.

Recall that our intention is to prove that
\beq\label{nooscsrep}
\|\sum_{1<k\leq k_{F}}\sum_{R\in\r_{k}^{0}}\chi_{I_{R}}{T^{\p_k^1[R]}}^{*}(g)\|_{L^1(F)}\lesssim |F|\,\log(\frac{4}{|F|})\,\|g\|_{\infty}\:,
\eeq
for $F\subseteq\TT$ measurable set and $g\in L^{\infty}(\TT)$ with $\|g\|_{\infty}=1$.

\medskip
We start by making precise the following heuristic: \emph{each term of the form ${T^{\p_k^1[R]}}^{*}(g)$ is essentially a sum of constant coefficients times suitable imaginary exponentials acting on the set $F\cap I_{R}$}.

\medskip
For this, we need to introduce first some notations:

Fix $R\in\r_{k}^{0}$ and using \eqref{ukl} we decompose
$$\p_k^1[R]=\bigcup_{l}\p_{k,(l)}^1[R]$$
into maximal trees at distinct frequencies $\{\o_l\}_l$.

Recalling now
\eqref{proj}, we define
\beq \label{almostconst1}
{T_{c}^{\p_k^1[R]}}^{*}g(\cdot):=\sum_{l} e^{2\pi i\,\o_l\cdot}\: \L_{I_{R}}({T^{\p_{k,(l)}^1[R,0]}}^{*}g)(\cdot)\:,
\eeq
where here we denote with $\o_l$ the frequency corresponding to the tree $\p_{k,(l)}^1[R]$ and $\p_{k,(l)}^1[R,0]$ designates the shift of $\p_{k,(l)}^1[R]$ at the origin.

\begin{l1}\label{treecutp1}
Let $R\in\r_{k}^{0}$. Then the following holds
\beq \label{forestexpl21}
\|{T^{\p_k^1[R]}}^{*}(g)-{T_{c}^{\p_k^1[R]}}^{*}(g)\|_{L^{\infty} (I_{R})}\lesssim
\sum_{P\in\p_k^1[R]} \frac{|I_{R}|}{|I_P|}\frac{\int_{E(P)}|g|}{|I_P|}\;.
\eeq
 \end{l1}

\begin{proof}

Set for notational simplicity
 $T_l={T^{\p_{k,(l)}^1[R,0]}}^{*}(g)-\L_{I_{R}}({T^{\p_{k,(l)}^1[R,0]}}^{*}(g))$.

 For fixed $l$ and $x\in I_{R} $ we have
$$|T_l(x)|=\left|{T^{\p_{k,(l)}^1[R,0]}}^{*}(g)(x)-\frac{1}{|I_{R}|}\int_{I_{R}}{T^{\p_{k,(l)}^1[R,0]}}^{*}(g)(s)ds\right|=$$
$$\left|\frac{1}{|I_{R}|}\int_{I_{R}}\left\{\sum_{{P\in\p_{k,(l)}^1[R,0]}\atop{2^{-j}=|I_P|\geq
|I_{R}|}}\int_{\TT}\left[\v_j(x-y)-\v_j(s-y)\right]g(y)\chi_{E(P)}(y)dy\right\}ds\right|$$
$$\lesssim\sum_{{P\in\p_{k,(l)}^1[R,0]}\atop{I_{P^*}\supseteq I_{R}}} \frac{|I_{R}|}{|I_P|}\frac{\int_{E(P)}|g|}{|I_P|}\:.$$

Summing now in $l$ we deduce that \eqref{forestexpl21} holds.
\end{proof}

\begin{l1}\label{difcont1}
With the previous notations, the following holds
\beq \label{difconts1}
\eeq
$$\|\sum_{1<k\leq k_{F}}\sum_{R\in\r_{k}^{0}}\chi_{I_{R}}{T^{\p_k^1[R]}}^{*}(g)\|_{L^1(F)}$$
$$\lesssim \|\sum_{1<k\leq k_{F}}\sum_{R\in\r_{k}^{0}}\chi_{I_{R}}{T_c^{\p_k^1[R]}}^{*}(g)\|_{L^1(F)}\,+\,|F|\,\log(\frac{4}{|F|})\,\|g\|_{\infty}\:.$$
\end{l1}

\begin{proof}

Appealing to the statement of Lemma \ref{treecutp1} we deduce
\beq \label{difconts1}
\eeq
$$\|\sum_{1<k\leq k_{F}}\sum_{R\in\r_{k}^{0}}\chi_{I_{R}}{T^{\p_k^1[R]}}^{*}(g)\|_{L^1(F)}$$
$$\lesssim \|\sum_{1<k\leq k_{F}}\sum_{R\in\r_{k}^{0}}\chi_{I_{R}}{T_c^{\p_k^1[R]}}^{*}(g)\|_{L^1(F)}$$
$$+\,\sum_{k\leq k_{F}}\sum_{R\in\r_{k}^{0}}\sum_{P\in\p_k^1[R]}
 \frac{|I_{R}|}{|I_P|}\frac{\int_{E(P)}|g|}{|I_P|}\,|I_{R}\cap F|\:.$$

Now denoting with $U$ the last term above, we deduce that
$$U\lesssim \|g\|_{\infty}\,\sum_{k\leq k_{F}} 2^{-k} \sum_{R\in\r_{k}^{0}}\sum_{P\in\p_k^1[R]} \frac{|I_{R}|^2}{|I_P|^2}\,|E(P)|\,.$$
Thus, for proving our lemma it is enough to show that:
\beq \label{U}
U\lesssim \,|F|\,\log(\frac{4}{|F|})\,\|g\|_{\infty}\:.
\eeq

Fix $k\in\{1,\ldots, k_{F}\}$. Then \eqref{U} follows
immediately once we show that
\beq \label{cont0}
U_{k}:=\sum_{R\in\r_{k}^{0}}\sum_{P\in\p_k^1[R]}\frac{|I_{R}|^2}{|I_P|^2}\,|E(P)|
\lesssim |\tilde{\I}_{k}|\:.
\eeq
Set $\I(\p_k^1[R]):=\{I_{P}\,|\,P\in \p_k^1[R]\}$. Then, \eqref{cont0} is a consequence of
\beq \label{cont1}
\eeq
$$U_{k}\lesssim \sum_{R\in\r_{k}^{0}}\sum_{I_P\in \I(\p_k^1[R])}\frac{|I_{R}|^2}{|I_P|^2}\,|I_P|\lesssim \sum_{R\in \r_{k}^{0}}|I_{R}|\lesssim |\tilde{\I}_{k}|\:.$$
\end{proof}

Based on the above two lemmas, we can justify now the following \textit{main heuristic for families of the type $\p_k^1[R]$}:

\medskip
\beq \label{heur}
\textsf{On the interval $I_{R}$ our operator ${T^{\p_k^{1}[R]}}^{*}(g)$ is morally constant}.
\eeq

\medskip
In order to prove the above heuristic, we start as before by decomposing
\beq \label{dec11}
\p_k^1[R]=\bigcup_{l}\p_{k,(l)}^1[R]\,,
\eeq
into maximal trees living at distinct (dyadic) frequencies $\{\o_l\}$. Once at this point, recalling the definition of $\p_k^{1}[R]$ and \eqref{lacadv} we deduce that for all $l$ and $R$ as in decomposition \eqref{dec11} we have
\beq \label{smallosc}
\sum_{l}\o_l\leq \bar{C}\,|\o_{R}|\;.
\eeq
Next, taking any $y_R\in I_{R}$, we notice that the Taylor expansion
\beq \label{Tay}
e^{2\pi i\,\o_l\,(x-y_{R})}=\sum_{k\geq 0} (2\pi i)^k\,\frac{[\o_l\,(x-y_{R})]^k}{k!}\,,
\eeq
is absolutely and uniformly convergent for any $x,\,y_{R}\in I_{R}$.

Recall now \eqref{almostconst1} and notice that
\beq \label{exp}
{T^{\p_k^1[R]}}^{*}=\sum_{l} e^{2\pi i\,\o_l\cdot} {T^{\p_{k,(l)}^1[R,0]}}^{*}\:.
\eeq
From \eqref{Tay} and \eqref{exp} we now deduce that
\beq \label{const1}
\begin{array}{rl}
\|{T_c^{\p_k^{1}[R]}}^{*}(g)\|_{L^1(F)}&\lesssim |I_{R}\cap F|\,|\sum_{l} e^{2\pi i\,\o_l\,y_{R}}\, \L_{I_{R}}({T^{\p_{k,(l)}^1[R,0]}}^{*}(g))(y_{R})|\\
&+\,|I_{R}\cap F|\,\sup_{l}\frac{\int_{I_{R}}|{{T^{\p_{k,(l)}^1[R]}}^{*}(g)}|}{|I_{R}|}\;.
\end{array}
\eeq
From \eqref{const1}, for an appropriately chosen $y_{R}\in I_{R}$, we further deduce
\beq \label{const12}
\begin{array}{cl}
&\|\sum_{1<k\leq k_{F}}\sum_{R\in\r_{k}^{0}}\chi_{I_{R}}{T_c^{\p_k^1[R]}}^{*}(g)\|_{L^1(F)}\\\\
&\lesssim \sum_{1<k\leq k_{F}} 2^{-k}\,\|\sum_{R\in\r_{k}^{0}}{T_c^{\p_k^{1}[R]}}^{*}(g)\|_{L^1}\\\\
&+\sum_{1<k\leq k_{F}} 2^{-k}\,\sum_{R\in\r_{k}^{0}}\sup_{l}\int_{I_{R}}|{{T^{\p_{k,(l)}^1[R]}}^{*}(g)}|\:.
\end{array}
\eeq
Using now \eqref{const12} and Lemmas \ref{treecutp1} and \ref{difcont1} we have
$\newline$

\begin{l1}\label{firstaproxp1fam}
The following holds
\beq \label{1noosc}
\begin{array}{rl}
&\|\sum_{1<k\leq k_{F}}\sum_{R\in\r_{k}^{0}}\chi_{I_{R}}{T^{\p_k^1[R]}}^{*}(g)\|_{L^1(F)}\\\\
&\lesssim\sum_{1<k\leq k_{F}} 2^{-k}\,\|\sum_{R\in\r_{k}^{0}}{T_c^{\p_k^{1}[R]}}^{*}(g)\|_{L^1}\\\\
&+\,\sum_{1<k\leq k_{F}} 2^{-k}\,\sum_{R\in\r_{k}^{0}}\sup_{l}\int_{I_{R}}|{{T^{\p_{k,(l)}^1[R]}}^{*}(g)}|\,+\,|F|\,\log(\frac{4}{|F|})\,\|g\|_{\infty}\:.

\end{array}
\eeq
 \end{l1}
$\newline$
$\newline$

\noindent\textbf{Finishing the proof of Theorem \ref{noosc}.}
$\newline$

Recalling \eqref{ukl} and \eqref{klmtreen} we set

$$\textbf{I}[n]:=\|\sum_{1<k\leq k_{F}}\sum_{R\in\r_{k}^{0}}\chi_{I_{R}}{T^{\p_k^{1,n}[R]}}^{*}(g)\|_{L^1(F)}\,,$$

and notice that a similar relation with \eqref{1noosc} will hold for this corresponding $n-$level.

Applying first Cauchy-Schwarz and then the Main Lemma to each maximal row $\p_{k,(l)}^{n}$ resulted from  $\p_l\cap\P^{k+1}_n$, we have

$$ \sum_{n}\textbf{I}[n]$$
$$\lesssim \sum_n\sum_{1<k\leq k_{F}} 2^{-k}\,\|\sum_{R\in\r_{k}^{0}}{T_c^{\p_k^{1,n}[R]}}^{*}(g)\|_{L^1}$$
$$+\,\sum_{n}\sum_{1<k\leq k_{F}} 2^{-k}\,\sum_{R\in\r_{k}^{0}}\sup_{l}\int_{I_{R}}|{{T^{\p_{k,(l)}^{1,n}[R]}}^{*}(g)}|\,+\,|F|\,\log(\frac{4}{|F|})$$
$$\lesssim \sum_n\sum_{1<k\leq k_{F}} 2^{-k}\,\sum_{R\in\r_{k}^{0}}|I_R|^{\frac{1}{2}}\,\|\sum_{l}{T_c^{\p_{k,(l)}^{1,n}[R]}}^{*}(g)\|_{L^2(I_R)}$$
$$+\,\sum_n\sum_{1<k\leq k_{F}} 2^{-k}\,\sum_{R\in\r_{k}^{0}}|I_R|^{\frac{1}{2}}\,\sup_{l}(\int_{I_{R}}|{{T^{\p_{k,(l)}^{1,n}[R]}}^{*}(g)}|^2)^{\frac{1}{2}}\,+\,|F|\,\log(\frac{4}{|F|})$$
$$\lesssim \sum_n\sum_{1<k\leq k_{F}} 2^{-k}\,|\bar {\I}_{k+1}|^{\frac{1}{2}}\,\|(\sum_{l}|{{T^{\p_{k,(l)}^{1,n}}}^{*}(g)}|^2)^{\frac{1}{2}}\|_{L^1}\,+\,|F|\,\log(\frac{4}{|F|})$$
$$\lesssim \sum_n\sum_{1<k\leq k_{F}} 2^{-\frac{n}{2}}\,2^{-k}\,
|\bar {\I}_{k+1}|\,+\,|F|\,\log(\frac{4}{|F|})\lesssim |F|\,\log \frac{4}{|F|}\:,$$
where in the second to last line we used notation \eqref{tottree}.

With this we conclude that \eqref{nooscsrep} holds.
$\newline$

\section{Reduction of Theorem \ref{ortog} to Proposition \ref{mainpropred}.}\label{thL2}

Our goal here is to show that
\beq\label{ortogs1}
\|\sum_{1<k\leq k_{F}}\sum_{R\in\r_{k}^{>0}}\chi_{I_{R}}{T^{\p_k^2[R]}}^{*}(g)\|_{L^1(F)}\lesssim |F|\,\log(\frac{4}{|F|})\,\|g\|_{\infty}\:.
\eeq

This will be the most difficult and in the same time interesting part of our paper in which the newly introduced concept of a set-resolution of the time-frequency plane at $0$-frequency will play a key role. The remainder part of the paper is entirely dedicated for proving \eqref{ortogs1}.

\subsection{An intermediate step: reduction to Proposition \ref{mainprop}.}

We start by adapting \eqref{almostconst1} to our new context:
fix $R\in\r_{k}^{>0}$ and recall the meaning of $\underline{R}$ - see Definition \ref{Rb}. Applying
the decomposition
$$\p_k^2[R]=\bigcup_{l}\p_{k,(l)}^2[R]$$
into maximal rows at distinct frequencies, we now define
\beq \label{almostconst}
{T_{c}^{\p_k^2[R]}}^{*}g(\cdot):=\sum_{l} e^{2\pi i\,\o_l\cdot}\: \L_{I_{\underline{R}}}({T^{\p_{k,(l)}^{2}[R,0]}}^{*}g)(\cdot)\:,
\eeq
where, as before, $\p_{k,(l)}^{2}[R,0]$ designates the shift of $\p_{k,(l)}^{2}[R]$ at the origin and $\o_l$ stands for the frequency corresponding to the tree $\p_{k,(l)}^2[R]$.

With the obvious modifications, the lemma below is the adaptation of the corresponding Lemma \ref{treecutp1}:
\begin{l1}\label{treecutp}
Let $R\in\r_{k}^{>0}$. Then the following holds
\beq \label{forestexpl2}
\|{T^{\p_k^2[R]}}^{*}(g)-{T_{c}^{\p_k^2[R]}}^{*}(g)\|_{L^{\infty} (I_{R})}\lesssim
\sum_{P\in\p_k^2[R]} \frac{|I_{\underline{R}}|}{|I_P|}\frac{\int_{E(P)}|g|}{|I_P|}\;.
\eeq
 \end{l1}

\begin{proof}

Adapting the definition of $T_l$ from Lemma \ref{treecutp1} to our context, we set $$T_l:={T^{\p_{k,(l)}^{2}[R,0]}}^{*}(g)-\L_{I_{\underline{R}}}({T^{\p_{k,(l)}^{2}[R,0]}}^{*}(g))\,.$$

 With this, proceeding as in the analogue proof, for fixed $l$ and $x\in I_{R}$ we deduce
\beq \label{diff}
|T_l(x)|\lesssim\sum_{{P\in\p_{k,(l)}^{2}[R,0]}\atop{I_{P^*}\supseteq I_{\underline{R}}}} \frac{|I_{\underline{R}}|}{|I_P|}\frac{\int_{E(P)}|g|}{|I_P|}\:,
\eeq
which, after summing in $l$, gives us the desired estimate.
\end{proof}

The next result is the analogue of Lemma \ref{difcont1}:

\begin{l1}\label{difcont}
With the previous notations, the following holds
\beq \label{difconts}
\eeq
$$\|\sum_{1<k\leq k_{F}}\sum_{R\in\r_{k}^{>0}}\chi_{I_{R}}{T^{\p_k^2[R]}}^{*}(g)\|_{L^1(F)}$$
$$\lesssim \|\sum_{1<k\leq k_{F}}\sum_{R\in\r_{k}^{>0}}\chi_{I_{R}}{T_c^{\p_k^2[R]}}^{*}(g)\|_{L^1(F)}\,+\,|F|\,\log(\frac{4}{|F|})\,\|g\|_{\infty}\:.$$
\end{l1}

\begin{proof}
Again, this is just an adaptation of the proof of Lemma \ref{difcont1} via Lemma \ref{treecutp}. Indeed, the analogue of \eqref{difconts1} reads now
\beq \label{difconts12}
\eeq
$$\|\sum_{1<k\leq k_{F}}\sum_{R\in\r_{k}^{>0}}\chi_{I_{R}}{T^{\p_k^2[R]}}^{*}(g)\|_{L^1(F)}$$
$$\lesssim \|\sum_{1<k\leq k_{F}}\sum_{R\in\r_{k}^{>0}}\chi_{I_{R}}{T_c^{\p_k^2[R]}}^{*}(g)\|_{L^1(F)}$$
$$+\,\sum_{k\leq k_{F}}\sum_{R\in\r_{k}^{>0}}\sum_{P\in\p_k^2[R]}
 \frac{|I_{\underline{R}}|}{|I_P|}\frac{\int_{E(P)}|g|}{|I_P|}\,|I_{R}\cap F|\:.$$

As before, letting
\beq \label{defU}
U:=\sum_{k\leq k_{F}}\sum_{R\in\r_{k}^{>0}}\sum_{P\in\p_k^2[R]}
 \frac{|I_{\underline{R}}|}{|I_P|}\frac{\int_{E(P)}|g|}{|I_P|}\,|I_{R}\cap F|\,,
\eeq
we deduce that
$$U\lesssim \|g\|_{\infty}\,\sum_{k\leq k_{F}} 2^{-k} \sum_{R\in\r_{k}^{>0}}\sum_{P\in\p_k^2[R]} \frac{|I_{\underline{R}}|\,|I_{R}|}{|I_P|^2}\,|E(P)|\,.$$

Fixing $k\in\{1,\ldots, k_{F}\}$, it remains to show that
\beq \label{cont}
U_{k}:=\sum_{R\in\r_{k}^{>0}}\sum_{P\in\p_k^2[R]}\frac{|I_{\underline{R}}|\,|I_{R}|}{|I_P|^2}\,|E(P)|
\lesssim |\tilde{\I}_{k}|\:.
\eeq
Setting $\I(\p_k^2[R]):=\{I_{P}\,|\,P\in \p_k^2[R]\}$, \eqref{cont} follows from
\beq \label{cont1}
\eeq
$$U_{k}\lesssim \sum_{R\in\r_{k}^{>0}}\sum_{I_P\in \I(\p_k^2[R])}\frac{|I_{\underline{R}}|\,|I_{R}|}{|I_P|^2}\,|I_P|$$
$$\lesssim \sum_{I\in\I_{k}}\sum_{R\in \r_{k}^{>0}[I]}|I_{R}|\,\sum_{I_P\in \I(\p_k^2[R])}\frac{|I_{\underline{R}}|}{|I_P|}\lesssim
\sum_{I\in\I_{k}}\sum_{R\in \r_{k}^{>0}[I]}|I_{R}|\lesssim |\tilde{\I}_{k}|\:.$$
\end{proof}

From the construction of the $F$-resolution of the time-frequency plane at $0-$frequency and from the definition of a generic $\p_k^2[R]$ we have that

\begin{o0}\label{osck}
Let $P\in \p_k^2[R]$. Then, the phase corresponding to $P$, referred to as $e^{2\pi i\,\a_P\,\cdot}$ with $\a_P\in \o_P$, does oscillate over the length of the interval $I_{\underline{R}}$ but is morally constant over the interval $I_{R}$.
\end{o0}

Applying this observation and invoking a similar argument with the one in \eqref{const1}-\eqref{const12} we deduce based on Lemma \ref{difcont} that

\begin{l1}\label{takef}
With the previous notations, we have that
\beq \label{takef1}
\eeq
$$\|\sum_{1<k\leq k_{F}}\sum_{R\in\r_{k}^{>0}}\chi_{I_{R}}{T^{\p_k^2[R]}}^{*}(g)\|_{L^1(F)}$$
$$\lesssim \sum_{1<k\leq k_{F}}\,\sum_{R\in\r_{k}^{>0}}\frac{|I_R\cap F|}{|I_R|}
\int_{I_R}\left|{T_c^{\p_k^2[R]}}^{*}(g)\right|\,+\,|F|\,\log(\frac{4}{|F|})\,\|g\|_{\infty}$$
\end{l1}

Fix now $1<k\leq k_{F}$ and $R\in \r_{k}^{>0}$. Recall the definitions at the end of Section \ref{tffolF}.
Now, in order to analyze the RHS of inequality \eqref{takef1}, we linearize the main term and thus, for $h,\,g\in L^{\infty}$ with $\|h\|_{\infty}=\|g\|_{\infty}=1$, we define the following expression:
\beq\label{defE}
\E(h,g):=\sum_{n\geq 1}\sum_{p\leq k_{F}}\E_{n}^{p}(h,g)\,,
\eeq
with
\beq \label{defEnm}
\E_{n}^{p}(h,g):= \sum_{1<k\leq k_{F}}\,\sum_{R\in\r_{k}^{>0}} \frac{|I_R\cap F|}{|I_R|}\,
\int_{I_R}h\,{T_c^{\bar{\p}_{k}^{2,n,[p]}[R]}}^{*}(g)\:.
\eeq

For any $R\in \r_{k}[I]$ we also define
\beq \label{almostconstt}
{T_{cc}^{\p_k^2[R]}}^{*}g(\cdot):=\sum_{l} e^{2\pi i\,\o_l\cdot}\: \L_{I}({T^{\p_{k,(l)}^{2}[R,0]}}^{*}g)(\cdot)\:.
\eeq

Depending on the context, we may choose to replace in \eqref{defEnm} the expression ${T_c^{\bar{\p}_{k}^{2,n,[p]}[R]}}^{*}(g)$ by ${T_{cc}^{\bar{\p}_{k}^{2,n,[p]}[R]}}^{*}(g)$ or simply ${T^{\bar{\p}_{k}^{2,n,[p]}[R]}}^{*}(g)$. This is fine since, the difference between any two such representations of $\E_{n}^{p}(h,g)$, even after summing in $p$, represents an admissible error of the form $\,|F|\,\log(\frac{4}{|F|})$. This last claim can be verified by following similar reasoning with the ones used in the proof of Theorem \ref{noosc} (more specifically Lemma \ref{difcont1}).

With these, we remark that \eqref{defEnm} can be then written\footnote{In these relations the symbol $\approx$ should be understood as equality up to admissible error terms that are controlled from above by $\,|F|\,\log(\frac{4}{|F|})$.} in any of the following fundamental forms depending on our convenience:
\beq \label{keyEfund}
\eeq
$$\E_{n}^{p}(h,g)=\sum_{1<k\leq k_{F}}\,\sum_{\underline{R}\in\r_{k}}\sum_{R\in\r_k^{chi}(\underline{R})}
\frac{|I_R\cap F|}{|I_R|}\,\int_{I_R}h\,{T_{c}^{\bar{\p}_{k}^{2,n,[p]}[R]}}^{*}(g)$$
$$\approx\sum_{1<k\leq k_{F}}\,\frac{|I_R\cap F|}{|I_R|}\,\,\sum_{\underline{R}\in\r_{k}}\sum_{R\in\r_k^{chi}(\underline{R})}
\int_{I_R}h\,{T_{cc}^{\bar{\p}_{k}^{2,n,[p]}[R]}}^{*}(g)\:.$$
$$\approx\sum_{1<k\leq k_{F}}\,\frac{|I_R\cap F|}{|I_R|}\,\sum_{\underline{R}\in\r_{k}}\sum_{R\in\r_k^{chi}(\underline{R})}
\int_{I_R}h\,{T^{\bar{\p}_{k}^{2,n,[p]}[R]}}^{*}(g)\:.$$

Once at this point, from Lemmas \ref{treecutp} - \ref{takef} and relation \eqref{keyEfund}, one trivially deduces that Theorem \ref{ortog} is a direct consequence of

\begin{p1}\label{mainprop} The following holds:
\beq\label{estimE}
|\E(h,g)|\lesssim |F|\,\log(\frac{4}{|F|})\,\|h\|_{\infty}\,\|g\|_{\infty}\:.
\eeq
\end{p1}

The remaining part of the paper is dedicated to proving Proposition \ref{mainprop}.

\subsection{A first reduction: small and large $n-$mass terms.}\footnote{Here we use the label $n-$mass for a tile $P$ to designate the fact that $P\in\P_n$.}

In this subsection we will split our term $\E$ in \eqref{defE} into a small and large $n-$mass component and show that the large $n$ component is under control.

Define now $n_F\in N$ to be the smallest natural number $n\in N$ such that
\beq \label{not}
2^{n}\geq k_{F}\approx\log\frac{4}{|F|}\:.
\eeq
Thus, we have that $n_F\approx \log \log \frac{12}{|F|}$.

Recalling now \eqref{keyEfund}, we define
\beq\label{defEs}
\E_{<}:=\sum_{n\leq n_F}\sum_{p\leq k_{F}}\E_{n}^{p}\,,
\eeq
\beq\label{defEl}
\E_{>}:=\sum_{n> n_F}\sum_{p\leq k_{F}}\E_{n}^{p}\,,
\eeq
and notice from \eqref{defE} that
\beq\label{defEe}
\E=\E_{<}+\E_{>}\,.
\eeq

As announced earlier, in what follows we will show that the second term is under control, that is

\begin{p1}\label{masslessprop}
With the previous notations, one has
\beq\label{estimElm}
|\E_{>}(h,g)|\lesssim |F|\,\log(\frac{4}{|F|})\,\|h\|_{\infty}\,\|g\|_{\infty}\:.
\eeq
\end{p1}

The above result is based on the next two key lemmas:

\begin{l1}\label{controlbasis}
Fix $I\in\I_k$, $r\in\N$, and $s_j\in\{U,L\}$ and take $\r_k^{s_1\ldots s_{r}}[I]$.
 With the definitions and notations from the previous sections the following holds\footnote{Here, as before, we apply the usual convention $\bar{\p}_{k}^{2,n,[p]}[\r_k^{s_1\ldots s_{r},U}[\underline{R}]]:=\bar{\p}_{k}^{2,n,[p]}[R]$ for some $R\in \r_{k,chi}^{s_1\ldots s_{r},U}[\underline{R}]$.}:
\beq\label{claimm11}
\eeq
$$\sum_{\underline{R}\in\r_k^{s_1\ldots s_{r}}[I]}
\sum_{R\in\r_k^{chi}(\underline{R})} \int_{I_R}\left|{T_{cc}^{\bar{\p}_{k}^{2,n,[p]}[R]}}^{*}(g)\right|$$
$$\lesssim \left(\sum_{\underline{R}\in \r_k^{s_1\ldots s_{r}}[I]} |I_{\underline{R}}|\right)^{\frac{1}{2}}\,
\left(\sum_{\underline{R}\in \r_k^{s_1\ldots s_{r}}[I]}\int_{I_{\underline{R}}}\left|{T_{cc}^{\bar{\p}_{k}^{2,n,[p]}[\r_k^{s_1\ldots s_{r},U}[\underline{R}]]}}^{*}(g)\right|^2\right)^{\frac{1}{2}}\:.$$
\end{l1}
\begin{proof}

We start by noticing that the LHS in \eqref{claimm11} can be rewritten as
\beq\label{VIR}
V_{\r_k^{s_1\ldots s_{r}}[I]}:=\sum_{{{s'_1=s_1,\ldots,\, s'_r=s_r,s'_{r+1}=U}\atop{s'_{u}=L\,\textrm{if}\, r+1<u\leq r'}}\atop{r'>r}}\int_{\bigcup_{R\in \r_k^{s'_1\ldots s'_{r'}}[I]}I_R}\left|{T_{cc}^{\bar{\p}_{k}^{2,n,[p]}[R]}}^{*}(g)\right|\:.
\eeq
Let us define
\beq\label{spacIR}
I_{\r_k^{s_1\ldots s_{r}}[I]}:=\bigcup_{R\in \r_k^{s_1\ldots s_{r}}[I]} I_{R}\:.
\eeq
Also notice that since for any $R,\,R'\in \r_k^{s'_1\ldots s'_{r'}}[I]$ one has $$T_{cc}^{\bar{\p}_{k}^{2,n,[p]}[R]}(g)=T_{cc}^{\bar{\p}_{k}^{2,n,[p]}[R']}(g)\,,$$
and thus one further deduces that the following is a well defined object:
$$T_{cc}^{\bar{\p}_{k}^{2,n,[p]}[\r_k^{s'_1\ldots s'_{r'}}[I]]}(g)=T_{cc}^{\bar{\p}_{k}^{2,n,[p]}[R]}(g)\:\:\:\:\textrm{for some}\:R\in \r_k^{s'_1\ldots s'_{r'}}[I]\:.$$
Notice that based on \eqref{CM}, for any $r'>r$, we have that
\beq\label{spacIRdec}
|I_{\r_k^{s_1\ldots s_{r}\ldots s_{r'}}[I]}|\lesssim \frac{1}{2^{\frac{r'-r}{2}}}\,|I_{\r_k^{s_1\ldots s_{r}}[I]}|\:.
\eeq
Fix now  $s_1\ldots s_{r}$. Throughout the entire proof when talking about $s'_1\ldots s'_{r'}$ we assume $r'>r$ and $s'_u=s_u$ if $u\leq r$, $s'_{r+1}=U$ and $s'_u=L$ if $u>r+1$.

For $w\in\N$ we define
\beq\label{spacIRdecw}
\eeq
$$I_{\r_k^{s_1\ldots s_{r'}}[I]}^w:=$$
$$\left\{x\in I_{\r_k^{s_1\ldots s_{r}}[I]}\,|\,|{T_{cc}^{\bar{\p}_{k}^{2,n,[p]}[\r_k^{s'_1\ldots s'_{r'}}[I]]}}^{*}(g)(x)|\approx 2^{w}\,\frac{\|{T_{cc}^{\bar{\p}_{k}^{2,n,[p]}[\r_k^{s'_1\ldots s'_{r'}}[I]]}}^{*}(g)\|_{L^2(I_{\r_k^{s_1\ldots s_{r}}[I]})}}{|I_{\r_k^{s_1\ldots s_{r}}[I]}|^{\frac{1}{2}}}\right\}\:.$$
Deduce now from Chebyshev inequality that
\beq\label{ChebspacIRdecw}
|I_{\r_k^{s_1\ldots s_{r'}}[I]}^w|\lesssim 2^{-2w}\,|I_{\r_k^{s_1\ldots s_{r}}[I]}|\:.
\eeq

We now estimate \eqref{VIR} as follows

$$V_{\r_k^{s_1\ldots s_{r}}[I]}\approx\sum_{{{s'_1=s_1,\ldots,\, s'_r=s_r,s'_{r+1}=U}\atop{s'_{u}=L\,\textrm{if}\, r+1<u\leq r'}}\atop{r'>r}}\sum_{w\in\N}\int_{I_{\r_k^{s_1\ldots s_{r'}}[I]}^w}\left|{T_{cc}^{\bar{\p}_{k}^{2,n,[p]}[\r_k^{s_1\ldots s_{r}}[I]]}}^{*}(g)\right|$$
$$\approx \sum_{{{s'_1=s_1,\ldots,\, s'_r=s_r,s'_{r+1}=U}\atop{s'_{u}=L\,\textrm{if}\, r+1<u\leq r'}}\atop{r'>r}}\sum_{w\in\N}
2^{w}\,\frac{|I_{\r_k^{s_1\ldots s_{r'}}[I]}^w|}{|I_{\r_k^{s_1\ldots s_{r}}[I]}|}$$
$$\times\,|I_{\r_k^{s_1\ldots s_{r}}[I]}|^{\frac{1}{2}}\,\|{T_{cc}^{\bar{\p}_{k}^{2,n,[p]}[\r_k^{s'_1\ldots s'_{r'}}[I]]}}^{*}(g)\|_{L^2(I_{\r_k^{s_1\ldots s_{r}}[I]})}\:.$$
Using now the fact that
\beq\label{dom}
\|{T_{cc}^{\bar{\p}_{k}^{2,n,[p]}[\r_k^{s'_1\ldots s'_{r'}}[I]]}}^{*}(g)\|_{L^2(I_{\r_k^{s_1\ldots s_{r}}[I]})}\lesssim\|{T_{cc}^{\bar{\p}_{k}^{2,n,[p]}[\r_k^{s_1\ldots s_{r},U}[I]]}}^{*}(g)\|_{L^2(I_{\r_k^{s_1\ldots s_{r}}[I]})}\:,
\eeq
together with \eqref{ChebspacIRdecw} and \eqref{spacIRdec} we further have
$$V_{\r_k^{s_1\ldots s_{r}}[I]}\lesssim |I_{\r_k^{s_1\ldots s_{r}}[I]}|^{\frac{1}{2}}\,\|{T_{cc}^{\bar{\p}_{k}^{2,n,[p]}[\r_k^{s_1\ldots s_{r},U}[I]]}}^{*}(g)\|_{L^2(I_{\r_k^{s_1\ldots s_{r}}[I]})}\,$$
$$\times\sum_{{{s'_1=s_1,\ldots,\, s'_r=s_r,s'_{r+1}=U}\atop{s'_{u}=L\,\textrm{if}\, r+1<u\leq r'}}\atop{r'>r}}\sum_{w\in\N}
\left(\frac{|I_{\r_k^{s_1\ldots s_{r'}}[I]}^w|}{|I_{\r_k^{s_1\ldots s_{r}}[I]}|}\right)^{\frac{1}{2}}$$
$$\lesssim |I_{\r_k^{s_1\ldots s_{r}}[I]}|^{\frac{1}{2}}\,\|{T_{cc}^{\bar{\p}_{k}^{2,n,[p]}[\r_k^{s_1\ldots s_{r},U}[I]]}}^{*}(g)\|_{L^2(I_{\r_k^{s_1\ldots s_{r}}[I]})}\,$$
$$\times\sum_{{{s'_1=s_1,\ldots,\, s'_r=s_r,s'_{r+1}=U}\atop{s'_{u}=L\,\textrm{if}\, r+1<u\leq r'}}\atop{r'>r}}\sum_{w\in\N}
\min \left\{2^{\frac{r-r'}{2}},\,2^{-w}\right\}^{\frac{1}{2}}$$
$$\lesssim |I_{\r_k^{s_1\ldots s_{r}}[I]}|^{\frac{1}{2}}\,\|{T_{cc}^{\bar{\p}_{k}^{2,n,[p]}[\r_k^{s_1\ldots s_{r},U}[I]]}}^{*}(g)\|_{L^2(I_{\r_k^{s_1\ldots s_{r}}[I]})}\,,$$
which is precisely the desired conclusion in \eqref{claimm11}.
\end{proof}

\begin{l1}\label{controlfreqbasis}
Let $I\in\I_k$ and $\r_k^{s_1\ldots s_{r}}[I]$ as before. Then, the following holds:
\beq\label{claimm21}
\eeq
$$\sum_{{{s'_1=s_1,\ldots,\, s'_r=s_r}\atop{s'_{r+1}=L}}\atop{r'>r}} \left(\sum_{\underline{R}\in \r_k^{s'_1\ldots s'_{r'}}[I]}|I_{\underline{R}}|\right)^{\frac{1}{2}}\,
\left(\sum_{\underline{R}\in \r_k^{s'_1\ldots s'_{r'}}[I]}\int_{I_{\underline{R}}}\left|{T_{cc}^{\bar{\p}_{k}^{2,n,[p]}[\r_k^{s'_1\ldots s'_{r'},U}[\underline{R}]]}}^{*}(g)\right|^2\right)^{\frac{1}{2}}$$
$$\lesssim \left(\sum_{\underline{R}\in \r_k^{s_1\ldots s_{r}}[I]}|I_{\underline{R}}|\right)^{\frac{1}{2}}\,
\left(\sum_{\underline{R}\in \r_k^{s_1\ldots s_{r}}[I]}\int_{I_{\underline{R}}}\left|{T_{cc}^{\bar{\p}_{k}^{2,n,[p]}[\r_k^{s_1\ldots s_{r},U}[\underline{R}]]}}^{*}(g)\right|^2\right)^{\frac{1}{2}}\:.$$
\end{l1}
\begin{proof}
With the notations introduced in the previous proof, our task is to control the term
\beq\label{claimm211}
\eeq
$$V:=\sum_{{{s'_1=s_1,\ldots,\, s'_r=s_r}\atop{s'_{r+1}=L}}\atop{r'>r}}
|I_{\r_k^{s'_1\ldots s'_{r'}}[I]}|^{\frac{1}{2}}\,\|{T_{cc}^{\bar{\p}_{k}^{2,n,[p]}[\r_k^{s'_1\ldots s'_{r'},U}[I]]}}^{*}(g)\|_{L^2(I_{\r_k^{s'_1\ldots s'_{r'}}[I]})}$$
Throughout this proof, we fix the sequence  $s_1\ldots s_{r}$ and whenever considering $s'_1\ldots s'_{r'}$ we assume $r'>r$ and $s'_u=s_u$ if $u\leq r$ and $s'_{r+1}=L$.

With the above notations and conventions, we have now the following important observations:
\beq\label{obs3imp}
\eeq
\begin{itemize}
\item $|I_{\r_k^{s'_1\ldots s_{r'}}[I]}|\lesssim \frac{1}{2^{\frac{r'-r}{2}}}\,|I_{\r_k^{s_1\ldots s_{r}}[I]}|\:.$

\item if $s'_1\ldots s'_{r'}$ and $s''_1\ldots s''_{r''}$ are two \textit{distinct} sequences such that $r'=r''$ then
\begin{itemize}
\item $I_{\r_k^{s'_1\ldots s_{r'}}[I]}\cap I_{\r_k^{s''_1\ldots s''_{r''}}[I]}=\emptyset$;

\item $\bar{\p}_{k}^{2,n,[p]}[\r_k^{s'_1\ldots s'_{r'},U}[I]]\cap \bar{\p}_{k}^{2,n,[p]}[\r_k^{s''_1\ldots s''_{r''},U}[I]]=\emptyset\,.$
\end{itemize}
\item given $r_0\geq r+1$ we have
\beq\label{ks1}
\bigcup_{r'=r_0}\bar{\p}_{k}^{2,n,[p]}[\r_k^{s'_1\ldots s'_{r'},U}[I]]\subseteq \bar{\p}_{k}^{2,n,[p]}[\r_k^{s_1\ldots s_{r},U}[I]]\,.
\eeq
\item same, if $r_0\geq r+1$ then, we have
\beq\label{k1}
 \sum_{{{s'_1=s_1,\ldots,\, s'_r=s_r}\atop{s'_{r+1}=L}}\atop{r'=r_0}}
|I_{\r_k^{s'_1\ldots s'_{r'}}[I]}| \lesssim |I_{\r_k^{s_1\ldots s_{r}}[I]}|\:.
\eeq
\item finally, using the previous notations and the second item above, one deduces
\beq\label{k2}
\eeq
$$\sum_{{{s'_1=s_1,\ldots,\, s'_r=s_r}\atop{s'_{r+1}=L}}\atop{r'=r_0}} \frac{\|{T_{cc}^{\bar{\p}_{k}^{2,n,[p]}[\r_k^{s'_1\ldots s'_{r'},U}[I]]}}^{*}(g)\|^2_{L^2(I_{\r_k^{s'_1\ldots s'_{r'}}[I]})}}{|I_{\r_k^{s'_1\ldots s'_{r'}}[I]}|}$$
$$\lesssim  \frac{\|{T_{cc}^{\bar{\p}_{k}^{2,n,[p]}[\r_k^{s_1\ldots s_{r},U}[I]]}}^{*}(g)\|^2_{L^2(I_{\r_k^{s_1\ldots s_{r}}[I]})}}{|I_{\r_k^{s_1\ldots s_{r}}[I]}|}\:.$$
\end{itemize}
Using now \eqref{obs3imp}, \eqref{k1}, \eqref{k2} and Cauchy-Schwarz we treat \eqref{claimm211} as follows:
\beq\label{claimm2111}
\eeq
$$V=\sum_{r_0\geq r+1}\sum_{{{s'_1=s_1,\ldots,\, s'_r=s_r}\atop{s'_{r+1}=L}}\atop{r'=r_0}}
|I_{\r_k^{s'_1\ldots s'_{r'}}[I]}|^{\frac{1}{2}}\,|I_{\r_k^{s'_1\ldots s'_{r'}}[I]}|^{\frac{1}{2}}$$
$$\times\frac{\|{T_{cc}^{\bar{\p}_{k}^{2,n,[p]}[\r_k^{s'_1\ldots s'_{r'},U}[I]]}}^{*}(g)\|_{L^2(I_{\r_k^{s'_1\ldots s'_{r'}}[I]})}}{|I_{\r_k^{s'_1\ldots s'_{r'}}[I]}|^{\frac{1}{2}}}$$
$$\lesssim \sum_{r_0\geq r+1} \frac{1}{2^{\frac{r_0-r}{4}}}\,|I_{\r_k^{s_1\ldots s_{r}}[I]}|^{\frac{1}{2}}\,
\left(\sum_{{{s'_1=s_1,\ldots,\, s'_r=s_r}\atop{s'_{r+1}=L}}\atop{r'=r_0}}|I_{\r_k^{s'_1\ldots s'_{r'}}[I]}|\right)^{\frac{1}{2}}\,$$
$$\times\left(\sum_{{{s'_1=s_1,\ldots,\, s'_r=s_r}\atop{s'_{r+1}=L}}\atop{r'=r_0}} \frac{\|{T_{cc}^{\bar{\p}_{k}^{2,n,[p]}[\r_k^{s'_1\ldots s'_{r'},U}[I]]}}^{*}(g)\|^2_{L^2(I_{\r_k^{s'_1\ldots s'_{r'}}[I]})}}{|I_{\r_k^{s'_1\ldots s'_{r'}}[I]}|}\right)^{\frac{1}{2}}$$
$$\lesssim \sum_{r_0\geq r+1} \frac{1}{2^{\frac{r_0-r}{4}}}\,|I_{\r_k^{s_1\ldots s_{r}}[I]}|^{\frac{1}{2}}\,
|I_{\r_k^{s_1\ldots s_{r}}[I]}|^{\frac{1}{2}}\,\times\,\frac{\|{T_{cc}^{\bar{\p}_{k}^{2,n,[p]}[\r_k^{s_1\ldots s_{r},U}[I]]}}^{*}(g)\|_{L^2(I_{\r_k^{s_1\ldots s_{r}}[I]})}}{|I_{\r_k^{s_1\ldots s_{r}}[I]}|^{\frac{1}{2}}}$$
$$\lesssim |I_{\r_k^{s_1\ldots s_{r}}[I]}|^{\frac{1}{2}}\,\|{T_{cc}^{\bar{\p}_{k}^{2,n,[p]}[\r_k^{s_1\ldots s_{r},U}[I]]}}^{*}(g)\|_{L^2(I_{\r_k^{s_1\ldots s_{r}}[I]})}\:.$$

\end{proof}

With these completed, we resume the proof of our Proposition \ref{masslessprop}:

First, we notice that introducing the notation\footnote{In order to be consistent with convention \eqref{conv0}, whenever we have sums involving families of the form $\r_k^{rU}[I]$ we need to allow $r$ to range through the set $\{-1,0,1,\ldots\}$ and apply the convention
\beq \label{conv1}
\r_k^{rU}[I]:=\left\{ \begin{array}{ll} \r_k^{0}[I]\:\:\textrm{if}\:\:r=-1\\
           \r_k^{1}[I]\:\:\textrm{if}\:\:r=0\:.
                        \end{array} \right.
\eeq}
\beq \label{levru}
\r_k^{rU}[I]:=\r_k^{\tiny{\underbrace{U\ldots U}_r}}[I]\:,
\eeq
based on \eqref{claimm11} and \eqref{claimm21}, we have
\beq \label{keyEfunds}
\eeq
$$|\E_{>}(h,g)|\lesssim\sum_{n> n_F}\sum_{p\leq k_{F}}\sum_{1<k\leq k_{F}}\,2^{-k}\,\sum_{\underline{R}\in\r_{k}}\sum_{R\in\r_k^{chi}(\underline{R})}
\int_{I_R}\left|{T_{cc}^{\bar{\p}_{k}^{2,n,[p]}[R]}}^{*}(g)\right|$$
$$\lesssim \sum_{n> n_F}\sum_{p\leq k_{F}}\sum_{1<k\leq k_{F}}\,2^{-k}\,
\sum_{I\in\I_k}\sum_{r}$$
$$\left(\sum_{\underline{R}\in \r_k^{r U}[I]} |I_{\underline{R}}|\right)^{\frac{1}{2}}\,
\left(\sum_{\underline{R}\in \r_k^{r U}[I]}\int_{I_{\underline{R}}}\left|{T_{cc}^{\bar{\p}_{k}^{2,n,[p]}[\r_k^{(r+1) U} [\underline{R}]]}}^{*}(g)\right|^2\right)^{\frac{1}{2}}\:.$$

Next, we record the following two key properties
\begin{itemize}
\item for any $1\leq k\leq k_{F}$ any $n\geq 0$ and any $I\in\I_k$, based on \eqref{CM} one has
\beq\label{kk11}
\sum_{\underline{R}\in {\r}_k^{(r+2) U}[I]} |I_{\underline{R}}|\leq \frac{1}{2}\sum_{\underline{R}\in {\r}_k^{r U}[I]} |I_{\underline{R}}|\:.
\eeq
\item for any $1\leq k\leq k_{F}$ any $r\not=r'$ and any $I\in\I_k$ one has
\beq\label{kk21}
\F(\r_k^{r U}[I])\cap \F(\r_k^{r' U}[I])=\emptyset\:.
\eeq
\end{itemize}

Applying now Cauchy-Schwarz and an almost-orthogonality argument one has
$$|\E_{>}(h,g)|\lesssim\sum_{n> n_F}\sum_{p\leq k_{F}}\left(\sum_{1<k\leq k_{F}}\,2^{-k}\,
\sum_{I\in\I_k}\sum_{r}\sum_{\underline{R}\in \r_k^{r U}[I]} |I_{\underline{R}}|\right)^{\frac{1}{2}}$$
$$\times\left(\sum_{1<k\leq k_{F}}\,2^{-k}\,\sum_{I\in\I_k}\sum_{r}\sum_{\underline{R}\in \r_k^{r U}[I]}\,
\int_{I_{\underline{R}}}\sum_{l}\left|{T_{cc}^{\bar{\p}_{k,(l)}^{2,n,[p]}[\r_k^{(r+1) U} [\underline{R}]]}}^{*}(g)\right|^2\right)^{\frac{1}{2}}$$

Making use of \eqref{kk11}, we have:
\beq \label{carlFol}
\sum_{I\in\I_k}\sum_{r}\sum_{\underline{R}\in \r_k^{r U}[I]}|I_{\underline{R}}|\lesssim |\bar{\I}_k|\,,
\eeq
Using now \eqref{carlFol}, Main Lemma and Corollary \ref{rowcont}, we deduce from above that
$$|\E_{>}(h,g)|\lesssim\sum_{n> n_F}\sum_{p\leq k_{F}}(|F|\,\log \frac{4}{|F|})^{\frac{1}{2}}$$
$$\times\left(\sum_{1<k\leq k_{F}}\,2^{-k}\,\sum_{I\in\I_k}\sum_{r}\sum_{\underline{R}\in \r_k^{r U}[I]}\,
\int_{I_{\underline{R}}}\sum_{l}\left|{T_{cc}^{\bar{\p}_{k,(l)}^{2,n,[p]}[\r_k^{(r+1) U} [\underline{R}]]}}^{*}(g)\right|^2\right)^{\frac{1}{2}}$$
$$\lesssim (|F|\,\log \frac{4}{|F|})^{\frac{1}{2}}\,\sum_{n> n_F}\sum_{p\leq k_{F}}
\left(\sum_{l} 2^{-2n}\,(\sum_{\p_{(l),a,b}^{n,(m),[p]}\in\p_{(l)}^{2,n,[p]}} 2^{-m}\,|I_{\p_{(l),a,b}^{n,(m),[p]}}|)\right)^{\frac{1}{2}}$$
$$\lesssim (|F|\,\log \frac{4}{|F|})^{\frac{1}{2}}\,\sum_{n> n_F}\,k_{F}\,2^{-\frac{n}{2}}\,|F|^{\frac{1}{2}}\lesssim |F|\,\log \frac{4}{|F|}\;,$$
where in the second inequality we used the convention
$$\p_{(l)}^{2,n,[p]}:=\bigcup_{{k\leq k_F}\atop{I\in\I_k}}\bigcup_{\underline{R}\in \r_k^{r U}[I]}\p_{k,(l)}^{2,n,[p]}[\r_k^{(r+1) U} [\underline{R}]]\:.$$

This proves the desired estimate \eqref{estimElm}.

It remains now to prove a similar estimate for the first term $\E_{<}$, that is

\begin{p1}\label{masslesprop}
With the previous notations, one has
\beq\label{estimEl}
|\E_{<}(h,g)|\lesssim |F|\,\log(\frac{4}{|F|})\,\|h\|_{\infty}\,\|g\|_{\infty}\:.
\eeq
\end{p1}

The proof of Proposition \ref{masslesprop} will be subject of the remaining sections.

\subsection{Decomposing $\E_{<}=\E_{Main}+\E_{Err}$. Reduction of Proposition \ref{masslesprop} to Proposition \ref{mainpropred}.}\label{redmr}

We start with the usual decomposition of the tile families under our analysis.

Fix $n\leq n_{F}$, $k\leq k_{F}$ and $p\leq k_{F}$. Recall from Section \ref{tffolF}, that
\beq \label{treedec}
\bar{\p}_{k}^{2,n,[p]}[R]=\bigcup_{l\geq 1}\bar{\p}_{k,(l)}^{2,n,[p]}[R]\:,
\eeq
with (see \eqref{klmtree})
\beq\label{klmtreess}
\bar{\p}_{k,(l)}^{2,n,[p]}[R]=\bigcup_{a,b,m}\bar{\p}_{k,(l),a,b}^{2,n,(m),[p]}[R]\:,
\eeq
such that each $\p_{k,(l),a,b}^{2,n,(m),[p]}[R]$ is as described in \eqref{layy1}; in particular, $\p_{k,(l),a,b}^{2,n,(m),[p]}[R]$ is a tree with uniform $F-$mass $\approx 2^{-m}$ and mass $\approx 2^{-n}$ and living at frequency $\o_l$.

With this said, we start a finer analysis of our families $\{\bar{\p}_{k,(l)}^{2,n,[p]}[R]\}_{R}$.

Throughout our discussion $R$, $n$ and $p$ are fixed. Recalling \eqref{uiprop} and picking $C_1\in\N$ with $C_1\leq 100$ we deduce that given any $\bar{\p}_{k,(l)}^{2,n,[p]}[R]$
there exist at most $C_1$ tress $\{\p_{k,(l),a_s,b_s}^{2,n,(m_s),[p]}[R]\}_{s=1}^{C_1}$ such that
\beq \label{redp}
\bar{\p}_{k,(l)}^{2,n,[p]}[R]=\bigcup_{s=1}^{C_1}\bar{\p}_{k,(l),a_s,b_s}^{2,n,(m_s),[p]}[R]\,.
\eeq

Given this, by splitting $\bar{\p}_{k,(l)}^{2,n,[p]}[R]$ in finitely many components (at most $C_1$) we will assume in all that follows that in fact
\beq \label{redps}
\bar{\p}_{k,(l)}^{2,n,[p]}[R]=\bar{\p}_{k,(l),a,b}^{2,n,(m),[p]}[R]\,,
\eeq
for some $a,\,b,\,m$ uniquely determined by $R$, $p$ and $l$.

Fix $\ep\in (0,\,\frac{1}{1000})$. With the convention made in \eqref{redps}, we split the family $\{\bar{\p}_{k,(l)}^{2,n,[p]}[R]\}_l$ into:
\begin{itemize}
\item the family of \textbf{$R$ heavy rows} defined by
\beq \label{heavytree}
\p_{k,he}^{2,n,[p]}[R]:=
\left\{\begin{array}{cl}
\bar{\p}_{k,(l)}^{2,n,[p]}[R]=\\
\bar{\p}_{k,(l),a,b}^{2,n,(m),[p]}[R]
\end{array}\,\bigg|\,
\frac{\|{T_{c}^{\bar{\p}_{k,(l),a,b}^{2,n,(m),[p]}[R]}}^{*}(g)\|_{L^2(I_{\underline{R}})}}
{|I_{\underline{R}}|^{\frac{1}{2}}}\geq 2^{n\ep}\,\frac{\|{T^{\bar{\p}_{(l),a,b}^{n,(m),[p]}}}^{*}(g)\|_{L^2(F)}}{(2^{-m}\,
|\tilde{I}_{\p_{(l),a,b}^{n,(m),[p]}}|)^{\frac{1}{2}}}\right\}\,.
\eeq

\item the family of \textbf{$R$ regular rows} defined by
\beq \label{regtree}
\p_{k,re}^{2,n,[p]}[R]:=
\left\{\begin{array}{cl}
\bar{\p}_{k,(l)}^{2,n,[p]}[R]=\\
\bar{\p}_{k,(l),a,b}^{2,n,(m),[p]}[R]
\end{array}\,\bigg|\,
\frac{\|{T_{c}^{\bar{\p}_{k,(l),a,b}^{2,n,(m),[p]}[R]}}^{*}(g)\|_{L^2(I_{\underline{R}})}}
{|I_{\underline{R}}|^{\frac{1}{2}}}< 2^{n\ep}\,\frac{\|{T^{\bar{\p}_{(l),a,b}^{n,(m),[p]}}}^{*}(g)\|_{L^2(F)}}{(2^{-m}\,
|\tilde{I}_{\p_{(l),a,b}^{n,(m),[p]}}|)^{\frac{1}{2}}}\right\}\,.
\eeq
\end{itemize}

\begin{o0}\label{constr}

It is worth noticing that the reference quantity in both  \eqref{heavytree} and \eqref{regtree} obeys via Corollary \ref{rowcont} the following estimate:
\beq \label{refqt}
\frac{\|{T^{\bar{\p}_{(l),a,b}^{n,(m),[p]}}}^{*}(g)\|_{L^2(F)}}{(2^{-m}\,
|\tilde{I}_{\p_{(l),a,b}^{n,(m),[p]}}|)^{\frac{1}{2}}}\lesssim 2^{-n}\:.
\eeq
In fact, at least at the heuristic level, one should think as $\lesssim$ replaced by $\approx$.
\end{o0}

Next, assume we are given $\s=(s_1,\ldots,
\,s_r)$. Our intention is to
further analyze the set $\r_{k,chi}^{\s}[\underline{R}]\equiv\r_{k,chi}^{s_1\ldots s_{r}}[\underline{R}]$ (recall \eqref{rchisb}):
\begin{itemize}

\item fix $\underline{R}\in \r_{k}$. We define the $(\s,p,n)-$\textit{height} relative to the base $\underline{R}$ as being simply
\beq \label{srich}
\H^{n,p}_{\s,\underline{R}}:=\#\{l\,|\,\bar{\p}_{k,(l)}^{2,n,[p]}[R]\in \p_{k,re}^{2,n,[p]}[R]\:\:\textrm{for some}\:R\in \r_{k,chi}^{\s}[\underline{R}]\}\,.
\eeq
The height $\H^{n,p}_{\s,\underline{R}}$ is indeed well defined since one can make a similar reasoning with the one in
Observation \ref{rchifoliation} and based on \eqref{regtree} deduce that
\beq \label{srichss}
\forall\:R,\,R'\in \r_{k,chi}^{\s}[\underline{R}]\:\:\Rightarrow\:\: \p_{k,re}^{2,n,[p]}[R]=\p_{k,re}^{2,n,[p]}[R']\,.
\eeq

\item for $w\in \N$ we \textit{define}
\beq \label{srichssw}
\r_{k}^{\s,[w]}(n,p,\underline{R})
\eeq
to be the set of all $R\in \r_{k,chi}^{\s}[\underline{R}]$
with the extra-property that
\begin{itemize}
\item if $w\geq 1$ one has
\beq \label{ws}
\|{T_{c}^{\p_{k,re}^{2,n,[p]}[R]}}^{*}(g)\|_{L^{\infty}(I_{R})}
\approx 2^{w}\,2^{-n}\,2^{n\ep}\,(\H^{n,p}_{\s,\underline{R}})^{\frac{1}{2}}\,;
\eeq
\item if $w=0$ we replace in \eqref{ws} the symbol $\approx$ by $\lesssim$.
\end{itemize}

Notice that with these definitions and notations one has
\beq \label{srichssws}
\r_{k,chi}^{\s}[\underline{R}]=\bigcup_{w\in\N}\r_{k}^{\s,[w]}(n,p,\underline{R})\:.
\eeq
\end{itemize}

We split now the operator associated with a generic tree $\p_{(l)}^{n,[p]}=\p_{(l),a,b}^{n,(m),[p]}$ into a main and error term respectively.
$\newline$

We start by defining the \underline{\textbf{error term}} of ${T^{\p_{(l)}^{n,[p]}}}^{*}(g)$ as
\beq \label{errter}
{T_{Err}^{\p_{(l)}^{n,[p]}}}^{*}(g):={T^{\p_{(l), he}^{n,[p]}}}^{*}(g)\,+\,{T^{\p_{(l), whe}^{n,[p]}}}^{*}(g)\,,
\eeq
where:

\begin{itemize}

\item the \textbf{heavy} component
\beq \label{heavyc}
\eeq
$${T^{\p_{(l), he}^{n,[p]}}}^{*}(g):=\sum_{1<k\leq k_{F}}\,\sum_{\underline{R}\in\r_{k}}\sum_{R\in\r_k^{chi}(\underline{R})}
{T^{(\p_{(l)}^{n,[p]}\cap\p_{k,he}^{2,n,[p]}[R])}}^{*}(g)\:.$$

\item the $w-$\textbf{heavy} component
\beq \label{wheavy}
\eeq
$${T^{\p_{(l), whe}^{n,[p]}}}^{*}(g):=
\sum_{1<k\leq k_{F}}\,\sum_{{\s}\atop{w\geq n\ep}}\sum_{\underline{R}\in\r_{k}}\sum_{R\in \r_{k}^{\s,[w]}(n,p,\underline{R})}{T^{(\p_{(l)}^{n,[p]}\cap\p_{k,re}^{2,n,[p]}[R])}}^{*}(g)\:.$$

\end{itemize}

We define the \underline{\textbf{main term}} of ${T^{\p_{(l)}^{n,[p]}}}^{*}(g)$ as
\beq \label{errter}
{T_{Main}^{\p_{(l)}^{n,[p]}}}^{*}(g):={T^{\p_{(l), L^1}^{n,[p]}}}^{*}(g)\,+\,{T^{\p_{(l), L^2}^{n,[p]}}}^{*}(g)\,,
\eeq
where:

\begin{itemize}
\item the \textbf{$L^1$-component} (or \textbf{low-height} component) is given by:
\beq \label{lowc}
\eeq
$${T^{\p_{(l), L^1}^{n,[p]}}}^{*}(g):=$$
$$\sum_{1<k\leq k_{F}}\,\sum_{{\s}\atop{w< n\ep}}
\sum_{{\underline{R}\in\r_{k}}\atop{\H^{n,p}_{\s,\underline{R}}\leq 2^{10 n \ep}}}\sum_{R\in \r_{k}^{\s,[w]}(n,p,\underline{R})}
{T^{(\p_{(l)}^{n,[p]}\cap\p_{k,re}^{2,n,[p]}[R])}}^{*}(g)$$

\item the \textbf{$L^2$-component} (or \textbf{high-height} component) is given by:
\beq \label{howc}
\eeq
$${T^{\p_{(l),L^2}^{n,[p]}}}^{*}(g):=$$
$$\sum_{1<k\leq k_{F}}\,\sum_{{\s}\atop{w<n\ep}}\sum_{{\underline{R}\in\r_{k}}\atop{\H^{n,p}_{\s,\underline{R}}> 2^{10 n \ep}}}\sum_{R\in \r_{k}^{\s,[w]}(n,p,\underline{R})}
{T^{(\p_{(l)}^{n,[p]}\cap\p_{k,re}^{2,n,[p]}[R])}}^{*}(g)\;.$$
\end{itemize}

Appealing to \eqref{keyEfund}, we create the corresponding analogues of the above tree components\footnote{Here it is more convenient to formulate these analogues in terms of $T_c^{\p}$ rather than $T^{\p}$ for appropriate families $\p$, where here $T_c^{\p}$ is the direct adaptation of \eqref{almostconst1} to our context.}, as follows:

We split the term
\beq \label{dec}
\E_{<}=\E_{Err}+\E_{Main}\,,
\eeq
with
\beq \label{err}
\E_{Err}:=\sum_{n\leq n_{F}}\sum_{p\leq k_{F}}\E_{n,Err}^{p}\,,
\eeq
and
\beq \label{main}
\E_{Main}:=\sum_{n\leq n_{F}}\sum_{p\leq k_{F}}\E_{n,Main}^{p}\,,
\eeq
where
\begin{itemize}
\item the \underline{\textbf{error term}} is further subdivided into
\beq \label{eql1}
\E_{n, Err}^{p}:=\E_{n,he}^{p}\,+\,\E_{n,whe}^{p}\:,
\eeq
with
\begin{itemize}
\item the \textbf{heavy} component given by
\beq \label{l1he}
\E_{n,he}^{p}(h,g):= \sum_{1<k\leq k_{F}}\,\sum_{\underline{R}\in\r_{k}}\sum_{R\in\r_k^{chi}(\underline{R})}
\frac{|I_R\cap F|}{|I_R|}\,\int_{I_R}h\,{T_c^{\p_{k,he}^{2,n,[p]}[R]}}^{*}(g)\,,
\eeq

\item the $w-$\textbf{heavy} component  given by
\beq \label{l1whe}
\E_{n,whe}^{p}:= \sum_{1<k\leq k_{F}}\,\sum_{{\s}\atop{w\geq n\ep}}\sum_{\underline{R}\in\r_{k}}
\sum_{R\in \r_{k}^{\s,[w]}(n,p,\underline{R})}\frac{|I_R\cap F|}{|I_R|}\,
\int_{I_R}h\,{T_c^{\p_{k,re}^{2,n,[p]}[R]}}^{*}(g)\,,
\eeq
\end{itemize}

\item the \underline{\textbf{main term}} is further subdivided into
\beq \label{eq}
\E_{n, Main}^{p}:= \E_{n, L^1}^{p}+\E_{n, L^2}^{p}\,,
\eeq
with:
\begin{itemize}
\item the \textbf{$L^1$-component} (or \textbf{low-height} component) given by:

\beq \label{l1slo}
\eeq
$$\E_{n, L^1}^{p}(h,g):= $$
$$\sum_{1<k\leq k_{F}}\,\sum_{{\s}\atop{w<n\ep}}\sum_{{\underline{R}\in\r_{k}}\atop{\atop{\H^{n,p}_{\s,\underline{R}}\leq 2^{10 n \ep}}}}
\sum_{R\in\r_{k}^{\s,[w]}(n,p,\underline{R})}\frac{|I_R\cap F|}{|I_R|}\,
\int_{I_R}h\,{T_c^{\p_{k,re}^{2,n,[p]}[R]}}^{*}(g)\:.$$

\item the \textbf{$L^2$-component} (or \textbf{high-height} component) given by:

\beq \label{keyE1}
\eeq
$$\E_{n,L^2}^{p}(h,g):= $$
$$\sum_{1<k\leq k_{F}}\,\sum_{{\s}\atop{w<n\ep}}\sum_{{\underline{R}\in\r_{k}}\atop{\H^{n,p}_{\s,\underline{R}}> 2^{10 n \ep}}}
\sum_{R\in\r_{k}^{\s,[w]}(n,p,\underline{R})} \frac{|I_R\cap F|}{|I_R|}\,
\int_{I_R}h\,{T_c^{\p_{k,re}^{2,n,[p]}[R]}}^{*}(g)\:.$$
\end{itemize}
\end{itemize}

\begin{p1}\label{mainpropred} With the above notations, we have
\beq\label{estimEmain}
|\E_{Main}(h,g)|\lesssim |F|\,\log(\frac{4}{|F|})\,\|h\|_{\infty}\,\|g\|_{\infty}\:.
\eeq
and
\beq\label{estimEerr}
|\E_{Err}(h,g)|\lesssim |F|\,\log(\frac{4}{|F|})\,\|h\|_{\infty}\,\|g\|_{\infty}\:.
\eeq
\end{p1}

\section{Treatment of the error term $\E_{Err}$}\label{Errterm}

Our goal in this section is to prove \eqref{estimEerr}.

Based on \eqref{err}-\eqref{l1whe}, we have
\beq \label{errdec}
\E_{Err}= \E_{he}+\E_{whe}\:,
\eeq
where
\beq \label{he}
\E_{he}:= \sum_{n\leq n_{F}}\sum_{p\leq k_{F}}\E_{n,he}^{p}\:,
\eeq
and
\beq \label{whe}
\E_{whe}:= \sum_{n\leq n_{F}}\sum_{p\leq k_{F}}\E_{n,whe}^{p}\:.
\eeq

\subsection{The term $\E_{he}$.}

We focus now our attention towards the term $\E_{n,he}^{p}$.

Recalling now \eqref{almostconst} and the convention made in \eqref{redps} we first remark the following key fact:

\beq \label{const}
\eeq
$$|{T_c^{\bar{\p}_{k,(l),a,b}^{2,n,(m),[p]}[R]}}^{*}(g)(x)|\big|_{x\in I_R}=\frac{\|{T_{c}^{\bar{\p}_{k,(l),a,b}^{2,n,(m),[p]}[R]}}^{*}(g)\|_{L^2(I_{R})}}{|I_{R}|^{\frac{1}{2}}}$$
$$=\frac{\|{T_{c}^{\bar{\p}_{k,(l),a,b}^{2,n,(m),[p]}[R]}}^{*}(g)\|_{L^2(I_{\underline{R}})}}{|I_{\underline{R}}|^{\frac{1}{2}}}=
\frac{\|{T^{\bar{\p}_{(l),a,b}^{n,(m),[p]}}}^{*}(g)\|_{L^2(I_{\underline{R}})}}{|I_{\underline{R}}|^{\frac{1}{2}}}\:.$$

Next, maintaining the assumption in \eqref{redps}, that is
\beq \label{redpss}
\bar{\p}_{k,(l)}^{2,n,[p]}[R]=\bar{\p}_{k,(l),a,b}^{2,n,(m),[p]}[R]\,,
\eeq
we notice that by applying the same techniques as in the proof of Lemma \ref{treecutp} - see \eqref{diff}, we get that for any
$x\in I_R\cap F$ one has
\beq \label{diffa}
|{T^{\bar{\p}_{(l),a,b}^{n,(m),[p]}}}^{*}(g)(x)-{T_c^{\p_{k,(l),he}^{2,n,[p]}[R]}}^{*}(g)(x)|\lesssim
\sum_{{{P\in\p_{k,(l),he}^{2,n,[p]}[R]}}\atop{I_{P^*}\supseteq I_{\underline{R}}}} \frac{|I_{\underline{R}}|}{|I_P|}\frac{\int_{E(P)}|g|}{|I_P|}\:.
\eeq

Define $I_{\p_{(l),a,b}^{n,(m),[p]}}$ as the time interval of the top of the tree $\p_{(l),a,b}^{n,(m),[p]}$ and let $\tilde{I}_{\bar{\p}_{(l),a,b}^{n,(m),[p]}}:=2\tilde{I}_{\p_{(l),a,b}^{n,(m),[p]}}$. Based on this, we notice that
$$\textrm{supp}\,{T^{\bar{\p}_{(l),a,b}^{n,(m),[p]}}}^{*}\subseteq \tilde{I}_{\bar{\p}_{(l),a,b}^{n,(m),[p]}}\,.$$

Set now
\beq \label{mnl}
A_{l,a,b}^{n,m,p}(F):=\left\{x\in \tilde{I}_{\bar{\p}_{(l),a,b}^{n,(m),[p]}}\cap F\,\big|\,|{T^{\bar{\p}_{(l),a,b}^{n,(m),[p]}}}^{*}(g)(x)|\geq 2^{n\ep-1}\,\frac{\|{T^{\bar{\p}_{(l),a,b}^{n,(m),[p]}}}^{*}(g)\|_{L^2(F)}}{(2^{-m}\,
|\tilde{I}_{\p_{(l),a,b}^{n,(m),[p]}}|)^{\frac{1}{2}}}\right\}\,.
\eeq

Recalling definition \eqref{heavytree} and based on \eqref{const}-\eqref{mnl}, we now have
\beq \label{rewH11}
\eeq
$$|\E_{n,he}^{p}(h,g)|\lesssim\sum_{1<k\leq k_{F}}\,\sum_{\underline{R}\in\r_{k}}\sum_{R\in\r_k^{chi}(\underline{R})}
\frac{|I_R\cap F|}{|I_R|}\,\int_{I_R}\left|{T_c^{\p_{k,he}^{2,n,[p]}[R]}}^{*}(g)\right|$$
$$\lesssim\sum_{1<k\leq k_{F}}\,\sum_{\underline{R}\in\r_{k}}\sum_{R\in\r_k^{chi}(\underline{R})}
\sum_l\int_{I_R\cap F}\left|{T_c^{\p_{k,(l),he}^{2,n,[p]}[R]}}^{*}(g)\right|$$

$$\lesssim\sum_{l}\sum_{1<k\leq k_{F}}\,\sum_{\underline{R}\in\r_{k}}\sum_{R\in\r_k^{chi}(\underline{R})}
\,|I_R\cap F|\,\sum_{{{P\in\p_{k,(l),he}^{2,n,[p]}[R]}}\atop{I_{P^*}\supseteq I_{\underline{R}}}} \frac{|I_{\underline{R}}|}{|I_P|}\frac{\int_{E(P)}|g|}{|I_P|}$$
$$+\,\sum_{l}\int_{A_{l,a,b}^{n,m,p}(F)}|{T^{\bar{\p}_{(l),a,b}^{n,(m),[p]}}}^{*}g|$$
$$=: U^{n,[p]} \,+\,\sum_{l}\int_{A_{l,a,b}^{n,m,p}(F)}|{T^{\bar{\p}_{(l),a,b}^{n,(m),[p]}}}^{*}g|\,.$$

Now from Chebyshev we deduce that
\beq \label{sizemnl}
|A_{l,a,b}^{n,m,p}(F)|\lesssim 2^{-2n\ep}\,2^{-m}\,
|\tilde{I}_{\p_{(l),a,b}^{n,(m),[p]}}|\:.
\eeq
Inserting now \eqref{sizemnl} in \eqref{rewH11} and using Cauchy-Schwarz and Corollary \ref{rowcont}, we deduce
\beq \label{rewH12}
\eeq
$$|\E_{n,he}^{p}(h,g)|$$
$$\lesssim U^{n,[p]}\,+\, \sum_{l}\sum_{\p_{(l),a,b}^{n, (m),[p]}\in \p_{(l),*}^{n,[p]}}|A_{l,a,b}^{n,m,p}(F)|^{\frac{1}{2}}\,(\int_{F}|{T^{\bar{\p}_{(l),a,b}^{n,(m),[p]}}}^{*}g|^2)^{\frac{1}{2}}$$
$$\lesssim U^{n,[p]}\,+\,2^{-n\ep}\,\sum_{l}
\sum_{\p_{(l),a,b}^{n, (m),[p]}\in \p_{(l),*}^{n,[p]}}\,2^{-n}\,2^{-m}\,
|I_{\p_{(l),a,b}^{n,(m),[p]}}|\:.$$
Finally, recalling the proof of Lemma \ref{difcont} that relies on the estimate of the term defined in \eqref{defU}, we conclude
\beq \label{rewHc}
\eeq
$$|\E_{he}(h,g)|\lesssim$$
$$ U\,+\,
\sum_{n\leq n_{F}}\sum_{p\leq k_{F}}
2^{-n\ep}\,\sum_{l}
\sum_{\p_{(l),a,b}^{n, (m),[p]}\in \p_{(l),*}^{n,[p]}}\,2^{-n}\,2^{-m}\,
|I_{\p_{(l),a,b}^{n,(m),[p]}}|$$
$$\lesssim |F|\,\log\frac{4}{|F|}\,.$$

\subsection{The term $\E_{whe}$.}
We pass to analyzing the term $\E_{n,whe}^{m}$.

Departing from definition \eqref{l1whe}, and making use of \eqref{ws}, we have
\beq \label{rewHw}
\eeq

$$|\E_{n,whe}^{p}(h,g)|\lesssim\sum_{1<k\leq k_{F}}\,2^{-k}\,\sum_{{\s}\atop{w\geq n\ep}}\sum_{\underline{R}\in\r_{k}}
\sum_{R\in \r_{k}^{\s,[w]}(n,p,\underline{R})}
\int_{I_R}\left|{T_c^{\p_{k,re}^{2,n,[p]}[R]}}^{*}(g)\right|$$

$$\lesssim \sum_{1<k\leq k_{F}}\,2^{-k}\,\sum_{{\s}\atop{w\geq n\ep}}\sum_{\underline{R}\in\r_{k}}
\sum_{R\in \r_{k}^{\s,[w]}(n,p,\underline{R})}\,|I_{R}|\,
2^{w}\,2^{-n}\,2^{n\ep}\,(\H^{n,p}_{\s,\underline{R}})^{\frac{1}{2}}\:.$$

We stop here for introducing some useful notation.

First of all, for a fixed $I\in\I_k$ and $\underline{R}\in \r_k^{s_1\ldots s_{r}}[I]$, we notice that any children  $R\in\r_k^{chi}(\underline{R})$ must have the form $R\in \r_{k,chi}^{s'_1\ldots s'_{r'}}[\underline{R}]$ with $r'>r$ such that  $s'_j=s_j$ if $j\leq r$, $s'_{r+1}=U$ and, if $r'>r+1$ then $s'_j=L$ if $r+1<j\leq r'$.

Now, if $\s=(s_1,\ldots,\,s_r)$ represents the standard notation for a given sequence, then we will write
$\underline{R}\in \r_k^{s_1\ldots s_{r}}[I]$ as $\underline{R}\in \r_k^{\s}[I]$ and $R\in\r_k^{chi}(\underline{R})$ with
$R\in \r_{k,chi}^{s'_1\ldots s'_{r'}}[\underline{R}]$ as $R\in \r_{k,chi}^{\s, U, (r'-r-1)L}[\underline{R}]$.
This notation will become very useful when distinguishing among the generations of the children within $\r_k^{chi}(\underline{R})$.

Now notice that given any two $R,\,R'\in \r_{k,chi}^{\s,U,rL,[w]}(n,p,\underline{R})$ one has
$$\p_{k,re}^{2,n,[p]}[R']=\p_{k,re}^{2,n,[p]}[R]\,,$$
and thus the following is well defined:
$$\p_{k,re}^{2,n,[p]}[ \r_{k,chi}^{\s,U,rL,[w]}(n,p,\underline{R})]:= \p_{k,re}^{2,n,[p]}[R]\:\:\textrm{for some}\:R\in \r_{k,chi}^{\s,U,rL,[w]}(n,p,\underline{R})\:.$$
Also, we set
\beq \label{rre}
\tilde{\I}_{\r_{k}^{\s,r,[w]}(n,p,\underline{R})}:=\bigcup_{R\in \r_{k,chi}^{\s,U,rL,[w]}(n,p,\underline{R})} I_{R}\:,
\eeq
and for any $R\in \r_{k,chi}^{\s,U,rL,[w]}(n,p,\underline{R})$ based on \eqref{srich} and \eqref{srichss} it makes sense to define
\beq \label{rreem}
\H_{\r_{k}^{\s,r,[w]}(n,p,\underline{R})}:=\H^{n,p}_{\s,\underline{R}}\:.
\eeq

Then, \eqref{rewHw} can be rewritten as
\beq \label{rewHww1}
\eeq
$$|\E_{n,whe}^{p}(h,g)|\lesssim$$
$$\sum_{1<k\leq k_{F}}\,2^{-k}\,\sum_{I\in\I_{k}}\sum_{\s}\sum_{\underline{R}\in\r_{k}^{\s}[I]} \sum_{{r\geq 0}\atop {w\geq n\ep}} |\tilde{\I}_{\r_{k}^{\s,r,[w]}(n,p,\underline{R})}|\,
2^{w}\,2^{-n}\,2^{n\ep}\,(\H_{\r_{k}^{\s,r,[w]}(n,p,\underline{R})})^{\frac{1}{2}}\:.$$

Now, corroborating the following elements:
\begin{itemize}
\item definition of the tiles inside $\p_{k,re}^{2,n,[p]}[R]$ and their frequency position relative to $\underline{R}$;

\item the lacunary structure of the frequencies of our tiles;

\item relations \eqref{regtree} and  \eqref{ws};
\end{itemize}
we can apply Zygmund's inequality (see \cite{LaDo} and also Section 4 in \cite{lvKony1}), to deduce the following key John-Nirenberg type condition:

there exists $c>0$ absolute constant such that
\beq \label{JN}
|\tilde{\I}_{\r_{k}^{\s,r,[w]}(n,p,\underline{R})}|\lesssim e^{-c 2^{2w}}\,|I_{\underline{R}}|\:.
\eeq

We now have

\begin{l1}\label{controlbasisnew}
Fix $I\in\I_k$, and $\underline{R}\in\r_k^{\s}[I]$.

Then, with the above notations, for any $0<\ep\leq \frac{1}{2}$ the following holds:
\beq\label{clb}
\eeq
$$\sum_{r\geq 0}\,|\tilde{\I}_{\r_{k}^{\s,r,[w]}(n,p,\underline{R})}|\,(\H_{\r_{k}^{\s,r,[w]}(n,p,\underline{R})})^{\frac{1}{2}}
\lesssim_{\ep} e^{-c 2^{2w}}\,|I_{\underline{R}}|\,(\H_{\r_{k}^{\s,0}(n,p,\underline{R})})^{\frac{1}{2}+\ep}\:,$$
where above we denoted with
$$\H_{\r_{k}^{\s,0}(n,p,\underline{R})}:=\#\{l\,|\,\bar{\p}_{k,(l)}^{2,n,[p]}[R]\in \p_{k,re}^{2,n,[p]}[R]\:\:\textrm{for some}\:R\in \r_{k,chi}^{\s,U,0}(n,p,\underline{R})\}\,,$$
and here $c>0$ is the same as in \eqref{JN}.
\end{l1}
\begin{proof}
The above statement and its proof here can be regarded as a refinement/specialization of Lemma \ref{controlbasis}.

We start by noticing that given any  $r'\geq r$ one has
$$\p_{k,re}^{2,n,[p]}[ \r_{k,chi}^{\s,U,r'L,[w]}(n,p,\underline{R})]\subseteq\p_{k,re}^{2,n,[p]}[ \r_{k,chi}^{\s,U,rL,[w]}(n,p,\underline{R})]\,,$$
which in particular implies that
\beq\label{clbsa}
\H_{\r_{k}^{\s,r',[w]}(n,p,\underline{R})}\leq \H_{\r_{k}^{\s,r,[w]}(n,p,\underline{R})}\leq \H_{\r_{k}^{\s,0}(n,p,\underline{R})}\:.
\eeq
Next, based on \eqref{clbsa}, we observe that in order to prove \eqref{clb} it is enough to show that

\beq\label{clb1}
\eeq
$$\sum_{\H_{\r_{k}^{\s,r,[w]}(n,p,\underline{R})}> \frac{1}{2}\,\H_{\r_{k}^{\s,0}(n,p,\underline{R})}}\,|\tilde{\I}_{\r_{k}^{\s,r,[w]}(n,p,\underline{R})}|\,
(\H_{\r_{k}^{\s,r,[w]}(n,p,\underline{R})})^{\frac{1}{2}}$$
$$\lesssim_{\ep} e^{-c 2^{2w}}\,|I_{\underline{R}}|\,(\H_{\r_{k}^{\s,0}(n,p,\underline{R})})^{\frac{1}{2}+\ep}\:.$$

Indeed, once \eqref{clb1} is proved, one can iterate the same argument in order to obtain for $k\in\N$ the general estimate
\beq\label{clb2}
\eeq
$$\sum_{{\frac{1}{2^{k+1}}\,\H_{\r_{k}^{\s,0}(n,p,\underline{R})}<\H_{\r_{k}^{\s,r,[w]}(n,p,\underline{R})}\leq \frac{1}{2^k}\,\H_{\r_{k}^{\s,0}(n,p,\underline{R})}}\atop{\frac{\H_{\r_{k}^{\s,0}(n,p,\underline{R})}}{2^{k+1}}}\geq 2^{2w}}\,|\tilde{\I}_{\r_{k}^{\s,r,[w]}(n,p,\underline{R})}|\,(\H_{\r_{k}^{\s,r,[w]}(n,p,\underline{R})})^{\frac{1}{2}}$$
$$\lesssim_{\ep} e^{-c 2^{2w}}\,|I_{\underline{R}}|\,(\frac{\H_{\r_{k}^{\s,0}(n,p,\underline{R})}}{2^k})^{\frac{1}{2}+\ep}\:,$$
which summing up over $k$ gives the desired estimate \eqref{clb}.

Thus, it remains now to prove \eqref{clb1}.

We next observe that wlog we may suppose that
\beq\label{clb3}
r\not=r'\:\:\:\Leftrightarrow\:\:\:\H_{\r_{k}^{\s,r',[w]}(n,p,\underline{R})}\not= \H_{\r_{k}^{\s,r,[w]}(n,p,\underline{R})}\:.
\eeq
Indeed, otherwise we can regroup the terms in the LHS of \eqref{clb1} and notice that letting $\tilde{\I}_{\r_{k}^{\s,[w]}(n,p,\underline{R})}^{\a}$ be the union of the spacial intervals $\{\tilde{\I}_{\r_{k}^{\s,r,[w]}(n,p,\underline{R})}\}$ having the property that $\H_{\r_{k}^{\s,r,[w]}(n,p,\underline{R})}=\a$ for some fixed $\a\in\N$, one still has the analogue of \eqref{JN}, that is
\beq \label{JNcas}
|\tilde{\I}_{\r_{k}^{\s,[w]}(n,p,\underline{R})}^{\a}|\lesssim e^{-c 2^{2w}}\,|I_{\underline{R}}|\:.
\eeq

With these done, we now proceed as follows: fix $\ep\in (0,\frac{1}{2}]$ as appearing in our hypothesis and assume wlog that $(\H_{\r_{k}^{\s,0}(n,p,\underline{R})})^{\ep},\,(\H_{\r_{k}^{\s,0}(n,p,\underline{R})})^{1-\ep}\in\N$.
For $i\in\{0,\ldots, \frac{1}{2}\,(\H_{\r_{k}^{\s,0}(n,p,\underline{R})})^{\ep}-1\}$ let us set
$$A_i:=\H_{\r_{k}^{\s,0}(n,p,\underline{R})}-\left[i\,(\H_{\r_{k}^{\s,0}(n,p,\underline{R})})^{1-\ep},\,
(i+1)\,(\H_{\r_{k}^{\s,0}(n,p,\underline{R})})^{1-\ep}\right)$$ and choose
$r_i$ to be the smallest $r$ for which $\H_{\r_{k}^{\s,r,[w]}(n,p,\underline{R})}\in A_i$.

Let now
$$F_{0}=F_{\r_{k}^{\s,0}(n,p,\underline{R})}:=$$
$$\{l\,|\,\bar{\p}_{k,(l)}^{2,n,[p]}[R]\in \p_{k,re}^{2,n,[p]}[R]\:\:\textrm{for some}\:R\in \r_{k,chi}^{\s,U,0}(n,p,\underline{R})\}\,,$$ and similarly, for $p>0$, set
$$F_{r}=F_{\r_{k}^{\s,r,[w]}(n,p,\underline{R})}:=$$
$$\{l\,|\,\bar{\p}_{k,(l)}^{2,n,[p]}[R]\in \p_{k,re}^{2,n,[p]}[R]\:\:\textrm{for some}\:R\in \r_{k}^{\s,r,[w]}(n,p,\underline{R})\}\,.$$
Deduce that
$$F_{r+1}\subsetneq F_r\:\:\textrm{and that}\:\: \#F_r=\H_{\r_{k}^{\s,r,[w]}(n,p,\underline{R})}\:.$$

Observe now that \eqref{clb1} follows if for any $i\in\{0,\ldots, \frac{1}{2}\,(\H_{\r_{k}^{\s,0}(n,p,\underline{R})})^{\ep}-1\}$ we show that

\beq\label{clb13}
\sum_{r=r_{i}}^{r_{i+1}-1}\,|\tilde{\I}_{\r_{k}^{\s,r,[w]}(n,p,\underline{R})}|
\lesssim e^{-c 2^{2w}}\,|I_{\underline{R}}|\:.
\eeq

We now rephrase \eqref{ws} in the form
\beq \label{wsref}
\frac{1}{2^{-n}\,2^{n\ep}\,(\#F_r)^{\frac{1}{2}}}\,\|\sum_{l\in F_r} {T_{c}^{\bar{\p}_{k,(l)}^{2,n,[p]}[\r_{k}^{\s,r,[w]}(n,p,\underline{R})]}}^{*}(g)\|_{L^{\infty}(\tilde{\I}_{\r_{k}^{\s,r,[w]}(n,p,\underline{R})})}
\approx 2^{w}\,.
\eeq

\begin{obs}\label{const} It is worth noticing that for any $R\in \r_{k,chi}^{\s,U,rL,[w]}(n,p,\underline{R})$ with $w>>1$ we have that the term $\sum_{l\in F_r} {T_{c}^{\bar{\p}_{k,(l)}^{2,n,[p]}[\r_{k}^{\s,r,[w]}(n,p,\underline{R})]}}^{*}(g)$ is morally constant on $I_R$, in the sense that
\beq \label{almconst}
\begin{array}{cl}
\sup_{x\in I_{R}}|\sum_{l\in F_r} {T_{c}^{\bar{\p}_{k,(l)}^{2,n,[p]}[\r_{k}^{\s,r,[w]}(n,p,\underline{R})]}}^{*}(g)(x)|\\\\
\approx \inf_{x\in I_{R}}|\sum_{l\in F_r} {T_{c}^{\bar{\p}_{k,(l)}^{2,n,[p]}[\r_{k}^{\s,r,[w]}(n,p,\underline{R})]}}^{*}(g)(x)|\approx 2^{w}\,2^{-n}\,2^{n\ep}\,(\#F_r)^{\frac{1}{2}}\:.
\end{array}
\eeq
Indeed, this follows from a similar reasoning with the one used to prove heuristic \ref{heur} (see in particular \eqref{const1}), together with the estimate \eqref{wsref}.
\end{obs}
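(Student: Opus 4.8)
The plan is to expand everything in sight using \eqref{almostconst} and then separate the (genuinely constant) amplitudes from the (slowly varying) phases, reducing \eqref{almconst} to the defining size condition \eqref{wsref} of the class $\r_{k}^{\s,[w]}(n,p,\underline{R})$.

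First I would unfold \eqref{almostconst}: for each $l\in F_{r}$ and each $x\in I_{\underline{R}}$ one has ${T_{c}^{\bar{\p}_{k,(l)}^{2,n,[p]}[\r_{k}^{\s,r,[w]}(n,p,\underline{R})]}}^{*}(g)(x)=a_{l}\,e^{2\pi i\,\o_{l}\,x}$, where $a_{l}:=|I_{\underline{R}}|^{-1}\int_{I_{\underline{R}}}{T^{\bar{\p}_{k,(l)}^{2,n,[p]}[R,0]}}^{*}(g)$ is a genuine constant (independent of the choice of $R\in\r_{k,chi}^{\s,U,rL,[w]}(n,p,\underline{R})$ by Observation \ref{rchifoliation}). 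Hence, writing $f:=\sum_{l\in F_{r}}a_{l}\,e^{2\pi i\,\o_{l}\,\cdot}$, the quantity appearing in \eqref{almconst} equals $f$ on $I_{\underline{R}}\supseteq I_{R}$, and the only source of non-constancy of $f$ on $I_{R}$ is the phases.

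Next I would record two estimates. (a) The frequencies $\o_{l}$ are localized, relative to $\o_{R}$ and $\o_{\underline{R}}$ and subject to the scale-separation convention \eqref{supportinttt}, in such a way that $|\o_{l}|\,|I_{R}|\lesssim 2^{-10}$ for every contributing $l$ -- this is exactly the content of Observation \ref{osck}; Taylor-expanding each phase about a reference point of $I_{R}$ precisely as in the passage leading to \eqref{const1}, this yields $\sup_{x,x'\in I_{R}}|f(x)-f(x')|\lesssim 2^{-10}\sum_{l\in F_{r}}|a_{l}|$. (b) Since $\{\o_{l}\}_{l}$ is lacunary and, again by Observation \ref{osck}, each $e^{2\pi i\,\o_{l}\,\cdot}$ genuinely oscillates over $I_{\underline{R}}$, the family $\{e^{2\pi i\,\o_{l}\,\cdot}\}_{l\in F_{r}}$ is almost orthogonal on $I_{\underline{R}}$ (the $L^{2}$ form of Zygmund's inequality already invoked for \eqref{JN}; split $\{\o_{l}\}$ into boundedly many large-ratio sub-lacunary families if needed), so that $\bigl(\sum_{l\in F_{r}}|a_{l}|^{2}\bigr)^{1/2}\lesssim|I_{\underline{R}}|^{-1/2}\,\|f\|_{L^{2}(I_{\underline{R}})}$. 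As $f$ restricted to $I_{\underline{R}}$ is the regular-rows operator ${T_{c}^{\p_{k,re}^{2,n,[p]}[R]}}^{*}(g)$ in the sense of \eqref{regtree}, and since the reference quantity obeys \eqref{refqt} (Observation \ref{constr}), the right-hand side is $\lesssim 2^{n\ep}\,2^{-n}$, and Cauchy--Schwarz gives $\sum_{l\in F_{r}}|a_{l}|\lesssim (\#F_{r})^{1/2}\,2^{n\ep}\,2^{-n}$.

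Finally I would combine (a) and (b) with \eqref{wsref}. Relation \eqref{wsref} -- which is just the defining property \eqref{ws} of $\r_{k}^{\s,[w]}(n,p,\underline{R})$ -- gives $\|f\|_{L^{\infty}(I_{R})}\approx 2^{w}\,2^{-n}\,2^{n\ep}\,(\#F_{r})^{1/2}$, while (a) and (b) bound the oscillation of $f$ over $I_{R}$ by $\lesssim 2^{-10}\,(\#F_{r})^{1/2}\,2^{n\ep}\,2^{-n}$, i.e.\ by a factor $\lesssim 2^{-w-10}$ times $\sup_{I_{R}}|f|$. Since $w\gg 1$ this forces $\inf_{I_{R}}|f|\ge\sup_{I_{R}}|f|-\sup_{x,x'}|f(x)-f(x')|\ge\frac{1}{2}\sup_{I_{R}}|f|$, hence both $\inf_{I_{R}}|f|$ and $\sup_{I_{R}}|f|$ are $\approx 2^{w}\,2^{-n}\,2^{n\ep}\,(\#F_{r})^{1/2}$, which is \eqref{almconst}. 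I expect the only slightly delicate point to be step (b): one must check that after shifting every tree to the origin the amplitude $a_{l}$ is truly constant on all of $I_{\underline{R}}$ (so that the non-constancy on $I_{R}$ comes solely from the phase) and that the lacunary almost-orthogonality on $I_{\underline{R}}$ legitimately applies; both facts, however, are already supplied by the machinery of Sections \ref{tffolF} and \ref{Zygmund}.
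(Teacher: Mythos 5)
Your skeleton --- genuinely constant amplitudes $a_{l}$ read off from \eqref{almostconst}, slowly varying phases, and the size coming from \eqref{wsref} --- is the route the paper intends (it is the reasoning behind \eqref{const1}), but your step (b) contains a quantitative error which, once corrected, breaks the argument as you have arranged it. The condition \eqref{regtree} (together with \eqref{refqt} and the identity expressing $|{T_{c}^{\bar{\p}_{k,(l)}^{2,n,[p]}[R]}}^{*}(g)|$ on $I_{R}$ as $\|\cdot\|_{L^{2}(I_{\underline{R}})}/|I_{\underline{R}}|^{1/2}$, recorded at the start of Section \ref{Errterm}) is a \emph{per-row} condition: it yields $|a_{l}|\lesssim 2^{n\ep}\,2^{-n}$ for each single $l\in F_{r}$, not a bound on the aggregate. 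Lacunary almost-orthogonality on $I_{\underline{R}}$ gives $\|f\|_{L^{2}(I_{\underline{R}})}^{2}\approx |I_{\underline{R}}|\sum_{l\in F_{r}}|a_{l}|^{2}$, so the correct conclusion is $(\sum_{l\in F_{r}}|a_{l}|^{2})^{1/2}\lesssim(\#F_{r})^{1/2}\,2^{n\ep}\,2^{-n}$; your claimed bound $\lesssim 2^{n\ep}2^{-n}$ is off by $(\#F_{r})^{1/2}$, and is in fact inconsistent with \eqref{wsref} itself, which forces $|I_{\underline{R}}|^{-1/2}\|f\|_{L^{2}(I_{\underline{R}})}$ to be of order $2^{-n}2^{n\ep}(\#F_{r})^{1/2}$ up to the exponential tail. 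Consequently Cauchy--Schwarz only yields $\sum_{l}|a_{l}|\lesssim \#F_{r}\,2^{n\ep}2^{-n}$, and your oscillation bound becomes $\lesssim \#F_{r}\,2^{n\ep}2^{-n}=2^{-w}(\#F_{r})^{1/2}\sup_{I_{R}}|f|$, which is \emph{not} small compared with $\sup_{I_{R}}|f|$ unless $2^{w}\gg(\#F_{r})^{1/2}$ --- something not guaranteed in the range where this Observation is used.

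The missing ingredient is the sharper, frequency-weighted oscillation estimate that the paper extracts in the derivation of \eqref{const1}: by the lacunarity relation \eqref{lacadv} (see \eqref{smallosc}) the contributing frequencies satisfy $\sum_{l}\o_{l}\lesssim|\o_{R}|=|I_{R}|^{-1}$, hence $\sup_{x,x'\in I_{R}}|f(x)-f(x')|\le 2\pi|I_{R}|\sum_{l}|a_{l}|\,\o_{l}\lesssim\sup_{l}|a_{l}|$ --- a supremum over $l$, not a sum. Combined with the genuinely available per-row bound $\sup_{l\in F_{r}}|a_{l}|\lesssim 2^{n\ep}2^{-n}$ and with \eqref{wsref}, this gives an oscillation $\lesssim 2^{-w}(\#F_{r})^{-1/2}\sup_{I_{R}}|f|\ll\sup_{I_{R}}|f|$ for $w\gg 1$, which is exactly \eqref{almconst}. (A secondary issue: in step (a) you assert $|\o_{l}|\,|I_{R}|\lesssim 2^{-10}$ for every contributing $l$; Observation \ref{osck} and the construction of $\p_{k}^{2}[R]$ only guarantee $|\o_{l}|\,|I_{R}|\lesssim 1$, so the factor $2^{-10}$ cannot rescue the computation either --- the gain has to come from $\sum_{l}\o_{l}\lesssim|\o_{R}|$, i.e.\ from lacunarity, not from uniform smallness of each individual phase increment.)
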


As a consequence of the above observation, applying Zygmund's inequality, we notice that for any $r_{i}\leq r< r_{i+1}$ and for each $R\in \r_{k,chi}^{\s,U,rL,[w]}(n,p,\underline{R})$ (remark from \eqref{rre} that $ I_{R}\subseteq \tilde{\I}_{\r_{k}^{\s,r,[w]}(n,p,\underline{R})}$) we have
\beq \label{wsreff}
\eeq
$$\ln \frac{|I_R|}{|I_{\underline{R}}|}\lesssim$$
$$-c\max\left\{\frac{\|\sum_{l\in F_{r_{i+1}}} {T_{c}^{\bar{\p}_{k,(l)}^{2,n,[p]}[\r_{k}^{\s,r,[w]}(n,p,\underline{R})]}}^{*}(g)\|_{L^{\infty}(I_R)}}
{2^{-n}\,2^{n\ep}\,(\#F_{r_{i+1}})^{\frac{1}{2}}},\right.$$
$$\,\left.\frac{\|\sum_{l\in F_{r}\setminus F_{r_{i+1}}} {T_{c}^{\bar{\p}_{k,(l)}^{2,n,[p]}[\r_{k}^{\s,r,[w]}(n,p,\underline{R})]}}^{*}(g)\|_{L^{\infty}(I_R)}}
{2^{-n}\,2^{n\ep}\,(\#(F_{r}\setminus F_{r_{i+1}}))^{\frac{1}{2}}} \right\}^2$$

Set now $$Y_r^{i}(R):=\frac{\|\sum_{l\in F_{r_{i+1}}} {T_{c}^{\bar{\p}_{k,(l)}^{2,n,[p]}[\r_{k}^{\s,r,[w]}(n,p,\underline{R})]}}^{*}(g)\|_{L^{\infty}(I_R)}}
{2^{-n}\,2^{n\ep}\,(\#F_{r_{i+1}})^{\frac{1}{2}}}\:,$$
and using now \eqref{wsref} and triangle inequality we rewrite \eqref{wsreff} as
\beq \label{wsrefff}
\eeq
$$\ln \frac{|I_R|}{|I_{\underline{R}}|}\lesssim$$
$$-c\max\left\{Y_r^{i}(R),\,2^{w}\,(\frac{\#F_r}{\#(F_{r}\setminus F_{r_{i+1}})})^{\frac{1}{2}}- Y_r^{i}(R)\,(\frac{\#F_{r_{i+1}}}{\#(F_{r}\setminus F_{r_{i+1}})})^{\frac{1}{2}}\right\}^2$$
Define now the following quantities:
$$\r_{k,chi,ba}^{\s,U,rL,[w]}(n,p,\underline{R}):=\{R\in \r_{k,chi}^{\s,U,rL,[w]}(n,p,\underline{R})\,|\,Y_r^{i}(R)\approx 2^{w}\}\,,$$
$$\r_{k,chi,to}^{\s,U,rL,[w]}(n,p,\underline{R}):=\r_{k,chi}^{\s,U,rL,[w]}(n,p,\underline{R})\setminus \r_{k,chi,ba}^{\s,U,rL,[w]}(n,p,\underline{R})\,,$$
$$\tilde{\I}_{\r_{k,ba}^{\s,r,[w]}(n,p,\underline{R})}:=\bigcup_{R\in \r_{k,chi,ba}^{\s,U,rL,[w]}(n,p,\underline{R})} I_{R}$$
and
$$\tilde{\I}_{\r_{k,to}^{\s,r,[w]}(n,p,\underline{R})}:=\bigcup_{R\in \r_{k,chi,to}^{\s,U,rL,[w]}(n,p,\underline{R})} I_{R}\:.$$
The key two observations are the following:
\beq\label{keyob1}
\sum_{r=r_{i}}^{r_{i+1}-1}\,|\tilde{\I}_{\r_{k,ba}^{\s,r,[w]}(n,p,\underline{R})}|
\lesssim e^{-c 2^{2w}}\,|I_{\underline{R}}|\:,
\eeq
and
\beq\label{keyob2}
|\tilde{\I}_{\r_{k,to}^{\s,r,[w]}(n,p,\underline{R})}|
\lesssim e^{-c 2^{2w}\,\frac{\#F_r}{\#(F_{r}\setminus F_{r_{i+1}})}}\,|I_{\underline{R}}|\:.
\eeq
With these, setting $L:=\H_{\r_{k}^{\s,0}(n,p,\underline{R})})$ we now have that
\beq\label{clb14}
\eeq
$$\sum_{r=r_{i}}^{r_{i+1}-1}\,|\tilde{\I}_{\r_{k}^{\s,r,[w]}(n,p,\underline{R})}|=$$
$$\sum_{r=r_{i}}^{r_{i+1}-1}\,|\tilde{\I}_{\r_{k,ba}^{\s,r,[w]}(n,p,\underline{R})}|+
\sum_{r=r_{i}}^{r_{i+1}-1}\,|\tilde{\I}_{\r_{k,to}^{\s,r,[w]}(n,p,\underline{R})}|$$
$$\lesssim  e^{-c 2^{2w}}\,|I_{\underline{R}}|\, (1+\sum_{r=r_{i}}^{r_{i+1}-1}
e^{-c\frac{\#F_r}{\#(F_{r}\setminus F_{r_{i+1}})}})$$
$$\lesssim_{\ep}  e^{-c 2^{2w}}\,|I_{\underline{R}}|\, (1+\sum_{l=1}^{L^{1-\ep}}(\frac{l}{L})^{\frac{1}{\ep}})$$
$$\lesssim_{\ep} e^{-c 2^{2w}}\,|I_{\underline{R}}|\:.$$
\end{proof}

Returning now to estimate \eqref{rewHww1} and using \eqref{clb} we have

$$|\E_{n,whe}^{p}(h,g)|$$
$$\lesssim
\sum_{1<k\leq k_{F}}\,2^{-k}\,\sum_{I\in\I_{k}}\sum_{\s}\sum_{\underline{R}\in\r_{k}^{\s}[I]} \sum_{w\geq n\ep}
e^{-c 2^{2w}}\,2^{w}\,2^{n\ep}\,2^{-n}\,\H_{\r_{k}^{\s,0}(n,p,\underline{R})}\,|I_{\underline{R}}|$$
$$\lesssim  2^{-n\ep}\,\sum_{l}
\sum_{\p_{(l),a,b}^{n, (m),[p]}\in \p_{(l),*}^{n,[p]}}\,2^{-n}\,2^{-m}\,
|\tilde{I}_{\p_{(l),a,b}^{n,(m),[p]}}|\:.$$
Thus
\beq \label{rewheconc}
\eeq
$$|\E_{whe}(h,g)|=\left |\sum_{n\leq n_{F}}\sum_{p\leq k_{F}}\E_{n,whe}^{p}(h,g)\right |$$
$$\lesssim\sum_{n\leq n_{F}}\sum_{p\leq k_{F}}2^{-n\ep}\,
\sum_{l}
\sum_{\p_{(l),a,b}^{n, (m),[p]}\in \p_{(l),*}^{n,[p]}}\,2^{-n}\,2^{-m}\,
|\tilde{I}_{\p_{(l),a,b}^{n,(m),[p]}}|\lesssim |F|\,\log\frac{4}{|F|}\,.$$

\section{Treatment of the main term $\E_{Main}$}\label{mainthm}

\subsection{Treatment of the $L^2-$term $\E_{L^2}:=\sum_{n\leq n_{F}}\sum_{p\leq k_{F}}\E_{n,L^2}^{p}$}
$\newline$

We start by analyzing the term $\E_{n,L^2}^{p}$.

Making now use of \eqref{ws} and \eqref{keyE1} we deduce that
\beq \label{keyE2}
\eeq
$$|\E_{n,L^2}^{p}(h,g)|$$
$$\lesssim\sum_{1<k\leq k_{F}}\,\sum_{{\s}\atop{w< n\ep}}\sum_{{\underline{R}\in\r_{k}}\atop{\H^{n,p}_{\s,\underline{R}}> 2^{10 n \ep}}}
\sum_{R\in\r_{k}^{\s,[w]}(n,p,\underline{R})}\frac{|I_R\cap F|}{|I_R|}\,
\int_{I_R}\left|{T_c^{\p_{k,re}^{2,n,[p]}[R]}}^{*}(g)\right|$$
$$\lesssim\sum_{1<k\leq k_{F}}\,\sum_{{\s}\atop{w< n\ep}}\sum_{{\underline{R}\in\r_{k}}\atop{\H^{n,p}_{\s,\underline{R}}> 2^{10 n \ep}}}
\sum_{R\in\r_{k}^{\s,[w]}(n,p,\underline{R})}|I_R\cap F|\,
\|{T_{c}^{\p_{k,re}^{2,n,[p]}[R]}}^{*}(g)\|_{L^{\infty}(I_{R})}$$
$$\lesssim\sum_{1<k\leq k_{F}}\,\sum_{\s}\sum_{{\underline{R}\in\r_{k}}\atop{\H^{n,p}_{\s,\underline{R}}> 2^{10 n \ep}}}
\sum_{R\in\r_{k}^{\s}(n,p,\underline{R})} 2^{2n\ep}\,(\H^{n,p}_{\s,\underline{R}})^{\frac{1}{2}}\,2^{-n}\,|I_{R}\cap F|$$
$$\lesssim\sum_{1<k\leq k_{F}}\,\sum_{\s}\sum_{{\underline{R}\in\r_{k}}\atop{\H^{n,p}_{\s,\underline{R}}> 2^{10 n \ep}}}
\sum_{R\in\r_{k}^{\s}(n,p,\underline{R})} 2^{-3n\ep}\,\H^{n,p}_{\s,\underline{R}}\,2^{-n}\,|I_{R}\cap F|$$
$$\lesssim_{Fubini} 2^{-3n\ep}\sum_{l}
\sum_{\p_{(l),a,b}^{n, (m),[p]}\in \p_{(l),*}^{n,[p]}}\,2^{-n}\,
|\tilde{I}_{\bar{\p}_{(l),a,b}^{n,(m),[p]}}\cap F|\:.$$

If we now set \beq \label{el2}
\E_{n,L^2}:=\sum_{p\leq k_{F}}\E_{n,L^2}^{p}\,,
\eeq
from above, we deduce that
$$|\E_{n,L^2}(h,g)|\lesssim $$
$$2^{-3n\ep}\sum_{p\leq k_{F}}\sum_{l}
\sum_{\p_{(l),a,b}^{n, (m),[p]}\in \p_{(l),*}^{n,[p]}}\,2^{-n}\,
|\tilde{I}_{\p_{(l),a,b}^{n,(m),[p]}}\cap F|\lesssim 2^{-3n\ep}\,|F|\,\log\frac{4}{|F|}\:,$$
which implies
\beq \label{ell2}
|\E_{L^2}(h,g)|=\left|\sum_{n\leq n_{F}}|\E_{n,L^2}(h,g)\right|\lesssim |F|\,\log\frac{4}{|F|}\,.
\eeq

\subsection{Treatment of the $L^1-$term $\E_{L^1}:=\sum_{n\leq n_{F}}\sum_{p\leq k_{F}}\E_{n,L^1}^{p}$}
$\newline$

This entire section is dedicated to the analysis of the term $\E_{L^1}$. Along the way, we will develop a methodology that will treat simultaneously families of tiles with distinct mass parameters.
\medskip

Indeed, unlike the previous cases, we will not be able - for the generic case - to obtain direct decay in the mass parameter $n$ for the term $\E_{n,L^1}^{p}$;  instead we will need to play a subtle game between the mass and $F$-mass scales of the maximal trees involved in our decomposition. Our approach will be based on two new ideas:
\begin{itemize}
\item the tree foliation of the time-frequency plane introduced in Section \ref{foltrees1};

\item the ($E$-)mass versus $F$-mass dichotomy defined in Section \ref{EF}.
\end{itemize}

\subsubsection{Identifying the main component of $\E_{L^1}$}\label{rowrefinementt}

We start by recalling the main term analyzed in this section:
\beq \label{l1slo11}
\eeq
$$\E_{L^1}(h,g)=\sum_{n\leq n_{F}}\sum_{p\leq k_{F}}\E_{n,L^1}^{p}(h,g)$$
$$=\sum_{{n\leq n_{F}}\atop{p\leq k_{F}}}\sum_{1<k\leq k_{F}}\sum_{{\s}\atop{w< n\ep}}\sum_{{\underline{R}\in\r_{k}}\atop{\H^{n,p}_{\s,\underline{R}}\leq 2^{10 n \ep}}}
\sum_{R\in\r_{k}^{\s,[w]}(n,p,\underline{R})}\frac{|I_R\cap F|}{|I_R|}\,
\int_{I_R}h\,{T_c^{\p_{k,re}^{2,n,[p]}[R]}}^{*}(g)$$
$$\lesssim\sum_{{n\leq n_{F}}\atop{p\leq k_{F}}}\sum_{1<k\leq k_{F}}\,2^{-k}\,\sum_{{\s}\atop{w< n\ep}}\sum_{{\underline{R}\in\r_{k}}\atop{\H^{n,p}_{\s,\underline{R}}\leq 2^{10 n \ep}}}
\sum_{R\in\r_{k}^{\s,[w]}(n,p,\underline{R})}
\int_{I_R}h\,{T_c^{\p_{k,re}^{2,n,[p]}[R]}}^{*}(g)\:.$$

\noindent  We introduce the following

\begin{d0}\label{lp} Let $1\leq p \leq k_F$. We say that $\p_{(l),a,b}^{n,(m)}=\p_{(l),a,b}^{n,(m),[p]}$ is an \textbf{$(L^1,\,F)$-saturated tree} iff
\beq \label{llow}
\sum_{1<k\leq k_{F}}\,\sum_{{\s}\atop{w< n\ep}}\sum_{{\underline{R}\in\r_{k}}\atop{\H^{n,p}_{\s,\underline{R}}\leq 2^{10 n \ep}}}
\sum_{R\in\r_{k}^{\s,[w]}(n,p,\underline{R})} 2^{-k}\,|I_{\bar{\p}_{k,(l),a,b,re}^{2,n,(m),[p]}[R]}^{R}|\geq \frac{2^{-m}}{2^{2n\ep}}\,|\tilde{I}_{P_{(l),a,b}^{n,(m),[p]}}|\:,
\eeq
where, as expected, here
$$\bar{\p}_{k,(l), a,b, re}^{2,n,(m),[p]}[R]:=\p_{k,(l),re}^{2,n}[R]\cap \bar{\p}_{(l),a,b}^{n,(m),[p]}\:\:\textrm{and}\:\:
I_{\bar{\p}_{k,(l),a,b,re}^{2,n,(m),[p]}[R]}^{R}:=I_R\cap \tilde{I}_{\bar{\p}_{(l),a,b}^{n,(m),[p]}}\:.$$

We let $\F_{sat}$ be the collection of all $\p_{(l),a,b}^{n,(m),[p]}$ with $n\leq n_{F},\,p\leq k_{F},\,a,\,b\geq 1$ which are $(L^1,\,F)$-saturated trees.
\end{d0}

With these done, we decompose
\beq \label{l1slo112}
\E_{L^1}(h,g)=\E_{L^1}^{sat}(h,g)+\E_{L^1}^{nsat}(h,g)\,,
\eeq
where
\beq \label{l1slo1121}
\eeq
$$\E_{L^1}^{sat}(h,g):=
\sum_{{n\leq n_{F}}\atop{p\leq k_{F}}}\sum_{1<k\leq k_{F}}\sum_{{\s}\atop{w< n\ep}}\sum_{{\underline{R}\in\r_{k}}\atop{\H^{n,p}_{\s,\underline{R}}\leq 2^{10 n \ep}}}
\sum_{R\in\r_{k}^{\s,[w]}(n,p,\underline{R})}\frac{|I_R\cap F|}{|I_R|}$$
$$\times\sum_{{\bar{\p}_{k,(l),a,b}^{2,n,(m),[p]}[R]\in\p_{k,re}^{2,n,[p]}[R]}
\atop{\p_{(l),a,b}^{n,(m),[p]}\in \F_{sat}}}
\int_{I_R} h\,{T_{c}^{\bar{\p}_{k,(l),a,b}^{2,n,(m),[p]}[R]}}^{*}(g)$$
with the obvious correspondence for the term $\E_{L^1}^{nsat}(h,g)$.

\begin{claim}\label{llp} The term  $\E_{L^1}^{nsat}(h,g)$ in \eqref{l1slo112} is an error term, that is
\beq \label{ertr}
|\E_{L^1}^{nsat}(h,g)|\lesssim |F|\,\log\frac{4}{|F|}\,\|f\|_{\infty}\,\|g\|_{\infty}.
\eeq
\end{claim}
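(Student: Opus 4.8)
The plan is to show that the non-saturated part $\E_{L^1}^{nsat}$ collects, for each relevant tree $\p_{(l),a,b}^{n,(m),[p]}$, only a small fraction of the $L^1$-contribution, quantified by the failure of the saturation condition \eqref{llow}. First I would use Observation \ref{osck} (the phase is morally constant on $I_R$) together with relation \eqref{const} to replace $\int_{I_R} h\,{T_c^{\bar{\p}_{k,(l),a,b}^{2,n,(m),[p]}[R]}}^{*}(g)$ by something bounded, up to admissible errors, by
$$
|I_{\bar{\p}_{k,(l),a,b,re}^{2,n,(m),[p]}[R]}^{R}|\;\cdot\;\frac{\|{T_{c}^{\bar{\p}_{k,(l),a,b}^{2,n,(m),[p]}[R]}}^{*}(g)\|_{L^2(I_{\underline{R}})}}{|I_{\underline{R}}|^{\frac{1}{2}}}\;.
$$
Since $\p_{(l),a,b}^{n,(m),[p]}\in \p_{k,re}^{2,n,[p]}[R]$ is a regular (not heavy) row, the reference quantity in \eqref{regtree} together with \eqref{refqt} of Observation \ref{constr} bounds the last factor by $2^{n\ep}\cdot 2^{-n}$ (heuristically $\approx 2^{n\ep}\,\frac{\|{T^{\bar{\p}_{(l),a,b}^{n,(m),[p]}}}^{*}(g)\|_{L^2(F)}}{(2^{-m}|\tilde I_{\p_{(l),a,b}^{n,(m),[p]}}|)^{1/2}}$). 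Inserting the factor $2^{-k}=\A_F$ from the $\frac{|I_R\cap F|}{|I_R|}$ prefactor (valid on $\P^k$), we are left controlling, for each fixed tree not in $\F_{sat}$,
$$
2^{n\ep}\,2^{-n}\sum_{1<k\leq k_{F}}\sum_{\s,\,w<n\ep}\sum_{{\underline{R}:\,\H^{n,p}_{\s,\underline{R}}\leq 2^{10n\ep}}}\sum_{R\in\r_{k}^{\s,[w]}(n,p,\underline{R})} 2^{-k}\,|I_{\bar{\p}_{k,(l),a,b,re}^{2,n,(m),[p]}[R]}^{R}|\;,
$$
and the defining failure of \eqref{llow} bounds the innermost multiple sum by $2^{-2n\ep}\,2^{-m}\,|\tilde I_{P_{(l),a,b}^{n,(m),[p]}}|$.

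The second step is to sum this estimate over all trees. Each non-saturated tree thus contributes at most $2^{n\ep}\,2^{-n}\cdot 2^{-2n\ep}\,2^{-m}\,|\tilde I_{P_{(l),a,b}^{n,(m),[p]}}| = 2^{-n\ep}\,2^{-n}\,2^{-m}\,|\tilde I_{P_{(l),a,b}^{n,(m),[p]}}|$, so that
$$
|\E_{L^1}^{nsat}(h,g)|\lesssim \sum_{n\leq n_F}\sum_{p\leq k_F} 2^{-n\ep}\sum_{l}\sum_{\p_{(l),a,b}^{n,(m),[p]}\in\p_{(l),*}^{n,[p]}} 2^{-n}\,2^{-m}\,|\tilde I_{P_{(l),a,b}^{n,(m),[p]}}|\;.
$$
This is precisely the bookkeeping quantity that has already been shown to be $\lesssim |F|\,\log\frac{4}{|F|}$ in the treatment of $\E_{he}$ and $\E_{whe}$ (see \eqref{rewHc} and \eqref{rewheconc}): for fixed $n,p$ the inner double sum over $l$ and over the trees in the $*$-foliation layer $\p_{(l),*}^{n,[p]}$ is controlled by $k_F\cdot|F|$ via the uniform intersection property \eqref{uiprop} and the fact that a given spacial point lies in at most $O(1)$ extended tops and at most $k_F$ layers; the surviving factors $2^{-n\ep}$ and the summability in $p$ (at most $k_F$ values) are absorbed since $n_F\approx \log\log\frac{12}{|F|}$ and $\sum_{n}2^{-n\ep}<\infty$, giving the final bound $|F|\,\log\frac{4}{|F|}$.

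The main obstacle, and the reason the argument is delicate rather than routine, is the replacement in the first step of the $L^1$-integral $\int_{I_R} h\,{T_c^{\dots}}^{*}(g)$ by its $L^2$-size: this requires the ``morally constant'' principle of Observation \ref{osck}, which is only valid after one has isolated the regular rows and has used the cut-off $w<n\ep$ defining the $L^1$-component together with the height restriction $\H^{n,p}_{\s,\underline R}\leq 2^{10n\ep}$ to guarantee that the $w$-summation and the $\s$-summation (over the foliation branches of the TFR of $F$) produce only a geometrically convergent contribution — exactly the content of the estimates \eqref{claimm11} and \eqref{claimm21} applied with the extra $2^{-k}$ weight. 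One must verify that the error incurred when passing to $T_c$ (respectively $T_{cc}$, $T$) from the genuine operator $T^{\p_k^2[R]}$ is of the admissible size $|F|\,\log\frac{4}{|F|}$; this follows, as remarked after \eqref{keyEfund}, from repeating the argument of Lemma \ref{difcont} (equivalently Lemma \ref{difcont1}), but keeping careful track of the non-saturation gain $2^{-2n\ep}$ so that no logarithmic loss enters beyond the single factor $\log\frac{4}{|F|}$ coming from the $k$-summation.
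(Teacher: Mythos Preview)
Your argument is correct and follows essentially the same route as the paper: replace the integral by $|I_R|$ times the constant value via \eqref{const}, bound that value by $2^{n\ep}\,2^{-n}$ using the regularity condition \eqref{regtree} together with \eqref{refqt}, insert $2^{-k}$ from the $F$-mass, and then invoke the failure of \eqref{llow} to gain $2^{-2n\ep}\,2^{-m}\,|\tilde I_{P_{(l),a,b}^{n,(m),[p]}}|$ per tree, after which the final summation is exactly the bookkeeping already seen in \eqref{rewHc}--\eqref{rewheconc}. One small remark: your closing paragraph overcomplicates the picture --- the passage from the $L^1$-integral to the $L^2$-size here needs only \eqref{const} and the regular-row bound, not Lemmas \ref{controlbasis}--\ref{controlfreqbasis} (i.e.\ \eqref{claimm11}, \eqref{claimm21}); the $w$- and $\s$-summations are absorbed directly by the non-saturation inequality rather than by those estimates.
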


\begin{proof}

Using now \eqref{regtree}, \eqref{refqt} and \eqref{const} we have
$$|\E_{L^1}^{nsat}(h,g)|=
|\sum_{{n\leq n_{F}}\atop{p\leq k_{F}}}\sum_{1<k\leq k_{F}}\sum_{{\s}\atop{w< n\ep}}\sum_{{\underline{R}\in\r_{k}}\atop{\H^{n,p}_{\s,\underline{R}}\leq 2^{10 n \ep}}}
\sum_{R\in\r_{k}^{\s,[w]}(n,p,\underline{R})}\frac{|I_R\cap F|}{|I_R|}$$
$$\times\sum_{{\bar{\p}_{k,(l),a,b}^{2,n,(m),[p]}[R]\in\p_{k,re}^{2,n,[p]}[R]}
\atop{\p_{(l),a,b}^{n,(m),[p]}\notin \F_{sat}}}
\int_{I_R} h\,{T_{c}^{\bar{\p}_{k,(l),a,b}^{2,n,(m),[p]}[R]}}^{*}(g)|$$
$$\lesssim\sum_{{n\leq n_{F}}\atop{p\leq k_{F}}}\sum_{1<k\leq k_{F}}\sum_{{\s}\atop{w< n\ep}}\sum_{{\underline{R}\in\r_{k}}\atop{\H^{n,p}_{\s,\underline{R}}\leq 2^{10 n \ep}}}
\sum_{R\in\r_{k}^{\s,[w]}(n,p,\underline{R})}\frac{|I_R\cap F|}{|I_R|}$$
$$\times\sum_{{\bar{\p}_{k,(l),a,b}^{2,n,(m),[p]}[R]\in\p_{k,re}^{2,n,[p]}[R]}
\atop{\p_{(l),a,b}^{n,(m),[p]}\notin \F_{sat}}}
|I_R|\,\frac{\|{T_{c}^{\bar{\p}_{k,(l),a,b}^{2,n,(m),[p]}[R]}}^{*}(g)\|_{L^2(I_R)}}{|I_R|^{\frac{1}{2}}}$$
$$\lesssim \sum_{{n\leq n_{F}}\atop{p\leq k_{F}}}\sum_{1<k\leq k_{F}}\sum_{{\s}\atop{w< n\ep}}\sum_{{\underline{R}\in\r_{k}}\atop{\H^{n,p}_{\s,\underline{R}}\leq 2^{10 n \ep}}}\sum_{R\in\r_{k}^{\s,[w]}(n,p,\underline{R})}$$
$$\times\sum_{{\bar{\p}_{k,(l),a,b}^{2,n,(m),[p]}[R]\in\p_{k,re}^{2,n,[p]}[R]}
\atop{\p_{(l),a,b}^{n,(m),[p]}\notin \F_{sat}}} 2^{n\ep}\,2^{-n}\,2^{-k}\,|I_{\bar{\p}_{k,(l),a,b,re}^{2,n,(m),[p]}[R]}^{R}|$$
$$\lesssim \sum_{{n\leq n_{F}}\atop{p\leq k_{F}}}\sum_{\p_{(l),a,b}^{n,(m),[p]}}
2^{-n\ep}\,2^{-n}\,2^{-m}\,|\tilde{I}_{\p_{(l),a,b}^{n,(m),[p]}}|\lesssim |F|\,\log\frac{4}{|F|}\,.$$
\end{proof}

We are now left with the treatment of the saturated term $\E_{L^1}^{sat}$. In this situation we can no longer use almost orthogonality techniques among various collections of trees. Thus, our very first step evolves as follows:
within each $0$ frequency tile $R$ we use triangle inequality - relative to $\|\cdot\|_1$ - among distinct frequency trees and then for each frequency $l$ we apply Cauchy-Schwarz and Corollary \ref{rowcont}. That is

\beq \label{lopp0}
\eeq
$$|\E_{L^1}^{sat}(h,g)|\lesssim \sum_{{n\leq n_{F}}\atop{p\leq k_{F}}}\sum_{1<k\leq k_{F}}\sum_{{\s}\atop{w< n\ep}}\sum_{{\underline{R}\in\r_{k}}\atop{\H^{n,p}_{\s,\underline{R}}\leq 2^{10 n \ep}}}
\sum_{R\in\r_{k}^{\s,[w]}(n,p,\underline{R})}|I_R\cap F|$$
$$\times\sum_{{\bar{\p}_{k,(l),a,b}^{2,n,(m),[p]}[R]\in\p_{k,re}^{2,n,[p]}[R]}
\atop{\p_{(l),a,b}^{n,(m),[p]}\in \F_{sat}}}
\int_{I_R\cap F} |{T_{c}^{\bar{\p}_{k,(l),a,b}^{2,n,(m),[p]}[R]}}^{*}(g)|$$
$$\lesssim \sum_{n\leq n_{F}}\,\sum_{p\leq k_{F}}\,\sum_{\p_{(l),a,b}^{n,(m),[p]}\in \F_{sat}} |F|^{\frac{1}{2}}\,\|{T^{\bar{\p}_{(l),a,b}^{n,(m),[p]}}}^{*}(g)\|_{L^(F)}$$
$$\lesssim \sum_{n\leq n_{F}}\,\sum_{p\leq k_{F}}\,\sum_{\p_{(l),a,b}^{n,(m),[p]}\in \F_{sat}} 2^{-n}\,2^{-m}\,|\tilde{I}_{P_{(l),a,b}^{n,(m),[p]}}|\:.$$

In what follows, for notational simplicity we will ignore the error term $\E_{L^1}^{nsat}(h,g)$ and thus wlog consider we have the following estimate

\beq \label{lopp}
|\E_{L^1}(h,g)|\lesssim \sum_{n\leq n_{F}}\,\sum_{p\leq k_{F}}\,\sum_{\p_{(l),a,b}^{n,(m),[p]}\in \F_{sat}} 2^{-n}\,2^{-m}\,|\tilde{I}_{P_{(l),a,b}^{n,(m),[p]}}|\:.
\eeq

\subsubsection{Tree foliation of the time-frequency plane depending on the mass and $F$-mass parameters}\label{foltrees1}

In this section, in parallel with the process described in Section \ref{foltree}, we develop an algorithm that organizes the family of tiles involved in $\E_{L^1}$ into maximal trees having uniform mass and $F-$mass parameters according to a ``descending foliation pattern" that focuses directly on the spacial support of these trees. This contrasts with the analysis performed in Section \ref{foltree} that was centered on the spacial support of the adjoint operators.

Recall first the decomposition made in Section \ref{foltree}, specifically relations \eqref{lndectree} - \eqref{Fmasdectree1n}.

Define now $\textbf{$\T\T$}$ as the set of all tops $P_{(l),a,b}^{n,(m),[p]}$ of the trees $\p_{(l),a,b}^{n,(m),[p]}\in \F_{sat}$.

Next, we let $\textbf{$\T\T$}(l)$ be the collection of all tops inside $\textbf{$\T\T$}$ which live at the frequency $l.$ Decompose this set into maximal layers  $\{\textbf{$\T\T$}(l,s)\}_{s\geq 1}$ as follows:
\begin{itemize}
\item for $s=1$ the set $\textbf{$\T\T$}(l,1)$ defines the collection of maximal, mutually disjoint top-trees at frequency $l$;

\item then, we remove the collection $\textbf{$\T\T$}(l,1)$ from $\textbf{$\T\T$}(l)$ and repeat the above selection procedure completing thus  the second level (layer) $\textbf{$\T\T$}(l,2)$.

\item this process will end up in finitely many steps; in fact one can say even more: there are at most $n_{F}+k_{F}$ such layers.
\end{itemize}

\begin{nt}
From now on, we redenote $P\in \textbf{$\T\T$}(l,s)$ as
\beq \label{plchar}
P_{(l),a}^{n,(m),\{s\}}\:,
\eeq
iff $P$ is the top of a maximal tree $\P_{(l),a}^{n,(m),\{s\}}$ with the following characteristics:
\begin{itemize}
\item lives at the frequency $\o_{l}$;
\item has uniform mass $\approx2^{-n}$;
\item has uniform $F-$mass $\approx2^{-m}$.
\end{itemize}
\end{nt}

Further on, we will apply the following convention: for $P_{(l),a}^{n,(m),\{s\}}\in \textbf{$\T\T$}(l,s)$ we define
\beq \label{keyrestr}
\textbf{$\T\T$}(l,s+1)[P_{(l),a}^{n,(m),\{s\}}]:=
\{P_{(l),a'}^{n',(m'),\{s+1\}}\in\textbf{$\T\T$}(l,s+1)\,|\,P_{(l),a'}^{n',(m'),\{s+1\}}< P_{(l),a}^{n,(m),\{s\}}\}\:.
\eeq

 We now have the following important

\begin{o0}\label{keytrdec}
Recalling Definitions \ref{Fmass} and \ref{mass}, and with the above notations, the following holds
\beq \label{key2}
\textrm{if}\:\:P_{(l),a'}^{n',(m'),\{s+1\}}\in\textbf{$\T\T$}(l,s+1)[P_{(l),a}^{n,(m),\{s\}}]\:\:\textrm{then}\footnote{The itemization below is not mutually exclusive.}
\eeq
\begin{itemize}
\item  either $m'=m-1$;
\item  or $n'=n-1$.
\end{itemize}
Also, remark that always $m-1\leq m'\leq m$ while $n'$ is not always/necessarily related to $n$; indeed, we have that if $m'=m$ then $n'=n-1$, while if $m'=m-1$ the value of $n'$ can be anything relative to $n$.
\end{o0}
\medskip

\subsubsection{$(E-)$mass versus $F-$mass dichotomy}\label{EF}
In what follows we introduce a partition of the set $\textbf{$\T\T$}(l)$ into two components
\beq \label{keyll}
\textbf{$\T\T$}(l)=\textbf{$\T\T^{E}$}(l)\cup \textbf{$\T\T^{F}$}(l)\;,
\eeq
with
\begin{itemize}
\item $\textbf{$\T\T^{E}$}(l)$ the\footnote{In the remaining part of the section we prefer to stress the fact that the standard mass introduced in Definition \ref{mass} quantifies the behavior of the measurable function $N(\cdot)$ in \eqref{carlac11} and is thus directly related with the properties of the set $E=\{ E(P)\}_{P\in\P_{sep}}$.} $(E-)$\textbf{mass} component;

\item $\textbf{$\T\T^{F}$}(l)$ the $F-$\textbf{mass} component.
\end{itemize}

This will be done according to the following \underline{selection algorithm}:

- let  $P_{(l),a}^{n,(m),\{s\}}\in \textbf{$\T\T$}(l,s)$ for some $s\in\N$; now

\begin{itemize}

\item \underline{\textsf{if}}
\beq \label{keyll1}
\sum_{P_{(l),a'}^{n',(m-1),\{s+1\}}\in\textbf{$\T\T$}(l,s+1)[\P_{(l),a}^{n,(m),\{s\}}]} 2^{-(m-1)}\,|\tilde{I}_{P_{(l),a'}^{n',(m-1),\{s+1\}}}|\leq \frac{7}{8}\,2^{-m}\,|\tilde{I}_{P_{(l),a}^{n,(m),\{s\}}}|\,
\eeq
\textsf{then} we declare  $P_{(l),a}^{n,(m),\{s\}}\in \textbf{$\T\T^{E}$}(l)$.

\item \underline{\textsf{otherwise}} we select $P_{(l),a}^{n,(m),\{s\}}\in \textbf{$\T\T^{F}$}(l)$.
\end{itemize}

Now, we let
$$\textbf{$\T\T^{E}$}:=\bigcup_{l}\textbf{$\T\T^{E}$}(l)\:\:\:\textrm{and}\:\:\:
\textbf{$\T\T^{F}$}:=\bigcup_{l}\textbf{$\T\T^{F}$}(l)\;.$$

Let us set now

\beq \label{lopp1}
\E_{L^1}^{E}:=\sum_{l}\sum_{n\leq n_{F}}\,\sum_{s\leq k_{F}}\,\sum_{P_{(l),a}^{n,(m),\{s\}}\in \textbf{$\T\T^{E}(l)$}} 2^{-n}\,2^{-m}\,|\tilde{I}_{P_{(l),a}^{n,(m),\{s\}}}|\:,
\eeq

\beq \label{lopp2}
\E_{L^1}^{F}:=\sum_{l}\sum_{n\leq n_{F}}\,\sum_{s\leq k_{F}}\,\sum_{P_{(l),a}^{n,(m),\{s\}}\in \textbf{$\T\T^{F}(l)$}} 2^{-n}\,2^{-m}\,|\tilde{I}_{P_{(l),a}^{n,(m),\{s\}}}|\:,
\eeq
and deduce from \eqref{lopp1}, \eqref{lopp2} and \eqref{lopp} that

\beq \label{lopp3}
|\E_{L^1}(h,g)|\lesssim \E_{L^1}^{E}\,+\,\E_{L^1}^{F}\:.
\eeq

\subsubsection{Treatment of the $E-$mass component}

We pass now to estimating the term $\E_{L^1}^{E}$.

We start by setting
\beq \label{not}
\textbf{$\T\T^{E,n}(l)$}:=\{P_{(l),a}^{n',(m),\{s\}}\in \textbf{$\T\T^{E}$}\,|\,n'=n\} \:.
\eeq
In a similar spirit with the row selection presented in the second half of Section \ref{rowrefinementt}, we perform a layer type decomposition of the set \textbf{$\T\T^{E,n}(l)$} as follows:
\beq \label{rowselecE}
\eeq
\begin{itemize}
\item we define \textbf{$\T\T^{E,n}(l)(1)$} to be the collection of maximal disjoint tops within the set \textbf{$\T\T^{E,n}(l)$};
\item next, we erase from \textbf{$\T\T^{E,n}(l)$} the set \textbf{$\T\T^{E,n}(l)(1)$} and repeat the previous procedure of selecting the collection of maximal disjoint tops into a new set \textbf{$\T\T^{E,n}(l)(2)$}.
\item iterate this procedure until the first time when the new initialized set \textbf{$\T\T^{E,n}(l)$} is void and then stop. This way we construct the successive layers $\{\textbf{$\T\T^{E,n}(l)(r)$}\}_{r\geq1}$.
\end{itemize}

Now first, we observe that based on the Carleson type condition \eqref{keyll1}, we deduce the following key relation:

for any $l$, $n\leq n_{F}$ and $P\in \textbf{$\T\T^{E,n}(l)$}$ we have

\beq \label{keynn}
\sum_{{P'< P}\atop{P'\in \textbf{$\T\T^{E,n}(l)$}}} \A_F(P')\,|\tilde{I}_{P'}| \leq \A_F(P)\,|\tilde{I}_{P}| \:,
\eeq
Combining now \eqref{lopp1} and \eqref{keynn} we have
\beq\label{lopp11}
\eeq
$$\E_{L^1}^{E}\approx \sum_{l}\sum_{n\leq n_{F}}\sum_{s\leq k_{F}}\,\sum_{r}\sum_{P_{(l),a}^{n,(m),\{s\}}\in \textbf{$\T\T^{E,n}(l)(r)$}} 2^{-n}\, 2^{-m}\,|\tilde{I}_{P_{(l),a}^{n,(m),\{s\}}}|$$
$$\lesssim \sum_{l}\sum_{n\leq n_{F}}\sum_{s\leq k_{F}}\sum_{P_{(l),a}^{n,(m),\{s\}}\in \textbf{$\T\T^{E,n}(l)(1)$}}  2^{-n}\, 2^{-m}\,|\tilde{I}_{P_{(l),a}^{n,(m),\{s\}}}|\:,$$
which further, by using Definition \ref{lp}, becomes
$$\lesssim \sum_{n\leq n_{F}}\sum_{p\leq k_{F}}\sum_{1<k\leq k_{F}}\sum_{I\in\I_k}\sum_{l}$$
$$\sum_{{\s}\atop{w< n\ep}}\sum_{{\underline{R}\in\r_{k}}\atop{\H^{n,p}_{\s,\underline{R}}\leq 2^{10 n \ep}}}
\sum_{R\in\r_{k}^{\s,[w]}(n,p,\underline{R})} \sum_{P_{(l),a,b}^{n,(m),[p]}\in \textbf{$\T\T^{E,n}(l)(1)$}} 2^{2n\ep}\,2^{-n}\,2^{-k}\,|I_{{\bar{\p}_{k,l,a,b,re}^{n,(m),[p],2}[R]}}^{R}|$$
$$\lesssim \sum_{1<k\leq k_{F}}\sum_{I\in\I_k}
\sum_{r\geq 0}\sum_{\underline{R}\in\r_{k}^{rU}[I]}
\sum_{R\in\r_{k}^{(r+1)U}(\underline{R})}
(\sum_{n\leq n_{F}} 2^{2n\ep}\,2^{-n}\,2^{10 n \ep})\,2^{-k}\,|I_{R}|\:,$$
where in the last inequality we made key use of the fact that in the second line above, the summation is restricted to $R\in\r_{k}^{\s,[w]}(n,p,\underline{R})$ with $\H^{n,p}_{\s,\underline{R}}\leq 2^{10 n \ep}$ and thus, for any such given $R$, we have that
\beq \label{srichsh}
\eeq
$$\#\{l\,|\,\bar{\p}_{k,l,a,b,re}^{n,(m),[p],2}[R]\,|\,P_{(l),a,b}^{n,(m),[p]}\in \textbf{$\T\T^{E,n}(l)(1)$}\}$$
$$\leq \#\{l\,|\,\bar{\p}_{k,(l)}^{2,n,[1]}[R]\in \p_{k,re}^{2,n,[1]}[R]\}=\H^{n,1}_{\s,\underline{R}}\leq 2^{10 n \ep}\,.$$

Finally, using \eqref{kk11} and \eqref{carlFol}, we conclude that
\beq\label{loppconc}
\E_{L^1}^{E}\lesssim \sum_{1<k\leq k_{F}} 2^{-k}\,\sum_{I\in \I_k}|I|\lesssim k_{F}\,|F|\:.
\eeq

\subsubsection{Treatment of the $F-$mass component}

We move now our attention towards the term $\E_{L^1}^{F}$.

In this context, we notice that if $\P_{(l),a}^{n,(m),\{s\}}\in \textbf{$\T\T^{F}$}(l)$ for some $s\in\N$ then \eqref{keyll1}
fails fact that can be re-written in a first instance as
\beq \label{keyll12}
\sum_{P_{(l),a'}^{n',(m-1),\{s+1\}}\in\textbf{$\T\T$}(l,s+1)[P_{(l),a}^{n,(m),\{s\}}]} 2^{-(m-1)}\,|\tilde{I}_{P_{(l),a'}^{n',(m-1),\{s+1\}}}|> \frac{7}{8}\,2^{-m}\,|\tilde{I}_{P_{(l),a}^{n,(m),\{s\}}}|\,,
\eeq
which thus further implies the key relation
\beq \label{keyll13}
\sum_{P_{(l),a'}^{n',(m), \{s+1\}}\in\textbf{$\T\T$}(l,s+1)[P_{(l),a}^{n,(m),\{s\}}]} 2^{-m}\,|\tilde{I}_{P_{(l),a'}^{n',(m),\{s+1\}}}|\leq \frac{1}{8}\,2^{-m}\,|\tilde{I}_{P_{(l),a}^{n,(m),\{s\}}}|\,.
\eeq
At this point, rephrasing Observation \ref{keytrdec} in our context, we notice that
\beq \label{keyll14}
\textrm{if}\:\p_{(l),a'}^{n',(m),\{s+1\}}\in\textbf{$\T\T$}(l,s+1)[\p_{(l),a}^{n,(m),\{s\}}] \:\:\textrm{then}\:\:n'= n-1\,.
\eeq
Consequently, from \eqref{keyll13} and \eqref{keyll14} we deduce that
\beq \label{keyll15}
\eeq
$$\sum_{\p_{(l),a'}^{n',(m),\{s+1\}}\in\textbf{$\T\T$}(l,s+1)[\p_{(l),a}^{n,(m),\{s\}}]} A(P_{(l),a'}^{n',(m),\{s+1\}})\,\A_{F}(P_{(l),a'}^{n',(m),\{s+1\}})\,|\tilde{I}_{P_{(l),a'}^{n',(m),\{s+1\}}}|$$
$$\leq \frac{1}{2} A(P_{(l),a}^{n,(m),\{s\}})\,\A_{F}(P_{(l),a}^{n,(m),\{s\}})\,|\tilde{I}_{P_{(l),a}^{n,(m),\{s\}}}|\,.$$
In symmetry with \eqref{not} we let
\beq \label{not1F}
\textbf{$\T\T^{F,[m]}(l)$}:=\{P_{(l),a}^{n,(m'),\{s\}}\in \textbf{$\T\T^{F}$}\,|\,m'=m\} \:.
\eeq
Also, applying an analogue procedure with the one presented in \eqref{rowselecE}, we decompose each set \textbf{$\T\T^{F,[m]}(l)$} into successive (finitely many) layers of maximal rows
\beq \label{not1FF}
\{\textbf{$\T\T^{F,[m]}(l)(r)$}\}_{r\geq 1}\:.
\eeq

Now \eqref{keyll15} implies the fundamental analogue of \eqref{keyn}:

for any $l$, $m\leq k_{F}$ and $P\in \textbf{$\T\T^{F,[m]}(l)$}$ we have
\beq \label{keyn}
\sum_{{P'< P}\atop{P'\in \textbf{$\T\T^{F,[m]}(l)$}}} A(P')\,\A_{F}(P')\,|\tilde{I}_{P'}| \leq  A(P)\,\A_{F}(P')\,|\tilde{I}_{P}| \:,
\eeq

With all these done, applying \eqref{keyn}, we have
\beq\label{loppconcF}
\eeq
$$\E_{L^1}^{F}\lesssim $$
$$\sum_{l}\sum_{r\leq n_{F}}\sum_{n\leq n_{F}}\sum_{m\leq k_{F}}\sum_{\p_{(l),a}^{n,(m),\{s\}}\in \textbf{$\T\T^{F,[m]}(l)(r)$}} A(P_{(l),a}^{n,(m),\{s\}})\,\A_{F}(P_{(l),a}^{n,(m),\{s\}})\,|\tilde{I}_{P_{(l),a}^{n,(m),\{s\}}}|$$
$$\lesssim \sum_{l}\sum_{n\leq n_{F}}\sum_{m\leq k_{F}}\sum_{\p_{(l),a}^{n,(m),\{s\}}\in \textbf{$\T\T^{F,[m]}(l)(1)$}} A(P_{(l),a}^{n,(m),\{s\}})\,\A_{F}(P_{(l),a}^{n,(m),\{s\}})\,|\tilde{I}_{P_{(l),a}^{n,(m),\{s\}}}|\:.$$
$$\lesssim \sum_{m\leq k_{F}}\sum_{l}\sum_{P\in \textbf{$\T\T^{F,[m]}(l)(1)$}} 2^{-m}\,A(P)\,|I_P|\:.$$
$$\lesssim \sum_{m\leq k_{F}}\sum_{l}\sum_{P\in \textbf{$\T\T^{F,[m]}(l)(1)$}} 2^{-m}\,|E(P)|\:,$$
where in the last line we used the special definition of the mass introduced in our current paper - i.e. Definition \ref{mass}, and the following key observation:
\beq\label{obsm1}
\textrm{if}\:P\in \textbf{$\T\T^{F,[m]}(l)(1)$}\:\textrm{then}\:A(P)=\frac{|E(P)|}{|I_P|}\,.
\eeq
Indeed, if  $P\in \textbf{$\T\T^{F,[m]}(l)(1)$}$ with $P\in\P_n$ then from Definition \ref{mass} we know that there must exist $P'\in P^{m}$ with $P\leq P'$ such that $A(P)=\frac{|E(P')|}{|I'|}\in(2^{-n-1}, 2^{-n}]$. However, from the construction of our trees described in the first half of Section \ref{foltree}, $P$ must be the top of a \emph{maximal} tree of a given $F-$mass and mass parameters. Thus since both $P,\,P'\in \P^m_n$ and $P\leq P'$ one must have $P=P'$.

We have now a last fundamental observation to record here:

Assume that we are given a (dyadic) interval $I\subset[0,1]$ and $\p=\p(I)\in\P$ any family of tiles obeying

\begin{itemize}
\item no two tiles in $\p$ are comparable under $``\leq"$;

\item for any $P\in\p$ one has that $I_{P}\subseteq I$.
\end{itemize}

Then, we have the following fundamental Carleson-packing type condition:

\beq\label{JNcon}
\sum_{P\in\p(I)} |E(P)|\leq |I|\:.
\eeq

Implementing now \eqref{JNcon} in \eqref{loppconcF} we conclude that
\beq\label{loppconcF111}
\sum_{l}\sum_{P\in \textbf{$\T\T^{F,[m]}(l)(1)$}} |E(P)|\lesssim |\tilde{I}_{m}|\,
\eeq
and hence
\beq\label{loppconcF1}
\eeq
$$\E_{L^1}^{F}\lesssim \sum_{m\leq k_{F}}\sum_{l}\sum_{P\in \textbf{$\T\T^{F,[m]}(l)(1)$}} 2^{-m}\,|E(P)|$$
$$\lesssim \sum_{m\leq k_{F}} 2^{-m}\,|\tilde{I}_{m}|\lesssim k_{F}\,|F|\:.$$

\section{Final remarks}\label{FR}

1) The techniques developed in this paper can easily be extended to the Walsh case. Indeed, recall first quickly several simple facts:

Fix $n\in\N$ and let
\beq\label{2base}
n=\sum_{i=0}^{\infty}\ep_{i}\,2^{i}\:\:\:\textrm{with}\:\:\:\ep_{i}\in\{0,\,1\}\,,
\eeq
be it's dyadic decomposition. We define the Walsh system $\{w_n\}_{n\in\N}$ as:
\beq\label{defw}
\eeq
\begin{itemize}
\item if $x\in\R\setminus[0,1)$ then $w_n(x)=0$ for any $n\in\N$;

\item if $x\in[0,1)$ and $n=0$ then we set $w_n(x)=1$;

\item if $x\in[0,1)$ and $n\geq 1$ obeys \eqref{2base} then we let\footnote{Notice that in \eqref{2base} only finitely many $\ep_{i}$'s are nonzero.}
$$w_{n}(x):=\prod_{i=0}^{\infty}\left( \textrm{sgn} (\sin 2^{i+1}\, \pi\,x)\right)^{\ep_i}\;.$$
\end{itemize}
Define the $n^{th}$ partial Walsh-sum as
\beq\label{Walshid}
W_{n}f(x):=\sum_{k=0}^{n} <f,\,w_k>\,w_k(x)\:.
\eeq
Now as a direct consequence of the methods developed in this paper, we have:

\begin{cor}\label{WW} Let $\{n_j\}_j$ be a lacunary sequence. Define the (lacunary) Walsh-Carleson operator as
\beq\label{WalshC}
C_{W}^{\{n_j\}_j}f(x):=\sup_{j\in\N}|W_{n_j}f(x)|\;.
\eeq
Then we have that
\beq\label{WalshC1}
C_{W}^{\{n_j\}_j}:\:L\log L\:\rightarrow\:L^1\:.
\eeq
Moreover, within the class of \textit{all} lacunary sequences this result is sharp.
\end{cor}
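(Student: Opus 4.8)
The plan is to reduce Corollary \ref{WW} to the Fourier case treated in the body of the paper by exploiting the standard Fourier--Walsh dictionary, which is essentially a dyadic model of the machinery we have already set up. First I would recall that the time-frequency discretization of the Walsh--Carleson operator is in fact \emph{cleaner} than its Fourier counterpart: the dyadic characters $w_n$ replace the modulations $e^{2\pi i N(\cdot)y}$, the Walsh wave packets are \emph{exactly} (not approximately) supported on tiles of area one, and there are no Schwartz tails to control. Thus every estimate in Sections \ref{discr1}--\ref{mainthm} -- the operator discretization, the time-frequency regularization of the set $F$, the set-resolution of the time-frequency plane at $0$-frequency, the tree $*$-foliation, the Main Lemma, and the treatment of $\E_{Err}$ and $\E_{Main}$ -- transfers verbatim, with the localization arguments actually becoming exact identities. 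In particular, the analogue of Main Theorem B, part i), reads
\beq\label{WalshB}
\|\chi_{F}\,(C_{W}^{\{n_j\}_j})^{*}(g)\|_{L^1}\lesssim |F|\,\log\frac{4}{|F|}\,\|g\|_{\infty}\,,
\eeq
and then the argument of Section \ref{reduct} (the dyadic level-set decomposition $f=\sum_l f\chi_{F_l}$ together with the identity $\|f\|_{L\log L}\approx\sum_l 2^l|F_l|\log\frac{4}{|F_l|}$) gives \eqref{WalshC1}.

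The key steps, in order: (1) set up the Walsh discretization, noting that the kernel decomposition $\frac1y=\sum_k\psi_k$ is replaced by the dyadic martingale-difference decomposition $W_n=\sum_{k}$(block at scale $2^k$), and that the lacunary condition \eqref{lacadv} again isolates the $0$-frequency and yields the partition $\P=\P(0)\cup\P_{cluster}\cup\P_{sep}$; (2) observe that all the geometric inputs -- the properties of the TFR of $F$ in Section \ref{propTFR}, the John--Nirenberg/Zygmund inequality \eqref{zygg}, and the $L^2$-mass control \eqref{l2dec} -- are classical and available in the Walsh setting (indeed Zygmund's inequality for lacunary Walsh series is elementary); (3) run the four theorems of Section \ref{prep} and the Main Lemma with all ``$\lesssim$'' in the wave-packet localization replaced by ``$=$''; (4) deduce \eqref{WalshB} and then \eqref{WalshC1}; (5) for sharpness, transfer the construction of Main Theorem B, part ii): take $F$ a dyadic interval of length $\approx\eta$, choose $N\equiv 0$ so that $C_{W}^{\{n_j\}_j}(\chi_F)\geq |W_{n_0}^{\sharp}(\chi_F)|$ dominates the dyadic (Walsh--)Hilbert transform of $\chi_F$, and run the telescoping computation of Section \ref{reduct} in the dyadic model to get $\|C_{W}^{\{n_j\}_j}(\chi_F)\|_{L^1}\gtrsim |F|\log\frac{4}{|F|}$; combined with the $L\log L\to L^1$ bound this shows no rearrangement-invariant space strictly larger than $L\log L$ can work, which is the asserted sharpness.

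I do not expect a genuine obstacle here -- the Walsh case is strictly easier than the Fourier case we have just proved -- so the ``hard part'' is purely expository: one must check that every place in the long argument where an approximation or a Schwartz-tail error was absorbed (for instance the passage from $T^{\p}$ to its $F$-localized piece, or the replacement of $T_c^{\p}$ by $T^{\p}$ modulo admissible errors) goes through trivially in the dyadic model, and that the constant $\bar C$ from \eqref{lacadv} plays the same structural role. The cleanest way to present this is not to re-prove anything but to state explicitly the dyadic analogues of Main Theorem B i) and ii), remark that their proofs are obtained from the ones in Sections \ref{discr1}--\ref{mainthm} and Section \ref{reduct} by the Fourier--Walsh dictionary (replacing wave packets by Walsh packets and approximate localization by exact localization), and then invoke the extrapolation/level-set reduction of Section \ref{reduct} to conclude \eqref{WalshC1} and its sharpness. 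This keeps the corollary genuinely a corollary rather than a second paper.
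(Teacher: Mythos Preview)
Your treatment of the positive direction \eqref{WalshC1} is exactly what the paper does: it simply asserts that the entire Fourier argument transfers to the Walsh model, so on that half you are aligned.

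The sharpness argument, however, diverges from the paper's and has a genuine gap. You propose to mimic Main Theorem~B~ii) by linearizing with $N\equiv 0$ and invoking a ``dyadic (Walsh--)Hilbert transform'' acting on $\chi_F$. But in the Walsh setting the choice $N\equiv 0$ yields $W_0 f=\int f$, a constant; there is no Hilbert-transform-like singular operator sitting inside the Walsh--Carleson operator at frequency~$0$ the way $H$ sits inside the Fourier--Carleson operator. So the telescoping computation from Section~\ref{reduct} has nothing to telescope against, and your lower bound $\|C_W^{\{n_j\}}(\chi_F)\|_1\gtrsim |F|\log\frac{4}{|F|}$ is unsupported. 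More broadly, you are attempting to prove sharpness for an \emph{arbitrary} lacunary sequence, whereas the statement only claims sharpness \emph{within the class} of all lacunary sequences, i.e.\ for \emph{some} sequence.

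The paper exploits this by choosing the specific sequence $n_j=2^j-1$. For this choice one checks (via the Rademacher product formula for the Walsh--Dirichlet kernel) that $W_{2^j-1}f(x)$ equals the dyadic average of $f$ over the dyadic interval of length $2^{-j}$ containing $x$, so that $C_W^{\{2^j-1\}}f\approx Mf$, the dyadic Hardy--Littlewood maximal function. Sharpness then follows from the classical fact that $M$ does not map any Lorentz space strictly larger than $L\log L$ into $L^1$; concretely the paper tests against $f_n=\min\{2^n,1/x\}$. This is both simpler and correct; you should replace your sharpness step with this argument.
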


The fact that \eqref{WalshC1} holds can be proved by following the same steps as in the Fourier case. The second claim, that is that the $(L\log L,\,L^1)$ bound is sharp within the class of all lacunary sequences can be easily proved as follows:

Choose $\{n_j\}_j$ such that $n_j=2^j-1$. Then, from the properties of the Walsh system, we have
\beq\label{alg4}
W_{n_j}(f)(x)=<f(\cdot),\,\prod_{i=0}^{j-1} \left(r_{0}(x)\,r_{0}(\cdot)+r_{2^i}(x)\,r_{2^i}(\cdot)\right)>\;,
\eeq
where here $r_{2^i}(x):=\textrm{sgn}(\sin 2^{i+1} \,\pi\,x)$ stands for the $i^{th}$ Rademacher function.

Consequently, we deduce
$$C_{W}^{\{n_j\}_j}f(x)\approx M f(x)\,,$$
and choosing now $f(x)$ within the sequence $\{f_n\}_{n\geq 1}$ given by $f_n(x):=\min\{2^n,\,\frac{1}{x}\}$ on $[0,1]$ one immediately notices that
$$\|C_{W}^{\{n_j\}_j}f_n\|_1\approx n^2\approx \|f_n\|_{L\log L}\:.$$

2) Our main theorem is a first result in the time-frequency literature involving $(L\log L, L^1)$ bounds for a frequency modulated maximal operator of Carleson type that incorporates both the behavior of the Hardy-Littlewood maximal operator and that of the Hilbert transform. This result is possible due to the following key two features:
\beq\label{keys}
\eeq
\begin{itemize}
\item the new concept of the set-resolution of the time-frequency plane (at a fix frequency) which is further based on the concept of the time-frequency regularization of a set;

\item the $E-$mass versus $F-$mass dichotomy (see Section \ref{EF}) that brings a selection of the trees in two families according to
\eqref{keyll}, \eqref{keyll1} and \eqref{keyll13} respectively.
\end{itemize}

The first item is responsible with matching the structural properties of the set $F$ and of the lacunary Carleson operator. The second item realizes an antithesis/complementarity in terms of the impact of the sets $E$ and $F$ on the tiles: if we isolate all the trees at a given frequency then the tops of the trees obey one of the two Carleson packing conditions
corresponding to
\begin{itemize}
\item \eqref{keynn} for the $E-$mass component;

\item \eqref{keyn} for the $F-$mass component.
\end{itemize}
This dichotomy is finally the one that allows the control over the summation in the terms \eqref{lopp1} and \eqref{lopp2} having as the final output the desired $L\log L$ bound.

3) Let us briefly comment on Conjecture 2. Recall that in this case we ask wether or not the full Carleson operator $C$ maps $L\log L$ into $L^1$. Obviously in this situation the much increased complexity is given by the fact that the set of frequencies range over arbitrary sets of integers. Thus isolating structures within this set of frequencies relative to the structural properties of the set $F$ becomes a very challenging task. However, there are genuine hopes
about approaching this problem that reside on elaborating on itemization \eqref{keys}: the first item there can be transformed into a modulation invariant concept that we like to call it ``the set-resolution of the time-frequency plane". This seems the right tool to be used since in this general instance no frequency plays a central role. The second item described in \eqref{keys} is applied to each single fixed frequency and thus can be transferred without modifications to our general setting. One of the main difficulties, relies now on the fact that there are no simple analogues of \eqref{lac} below (see the Appendix). This is the critical point where we expect that techniques from additive combinatorics will play a major role. We hope to address this topic into a future work.

\section{Appendix\\Interaction between additive combinatorics and time-frequency analysis via
TFR}\label{Zygmund}

This section should be regarded as providing some motivation and further heuristics for the key concept introduced Section \ref{resol}. Our aim is to explain how the newly introduced concept of the time-frequency regularization of a set exploits the additive combinatoric properties of the set relative to lacunary Fourier series.

Indeed, we have the following

\begin{prop}\label{setresapp}
Assume we are given the following:
\begin{itemize}
\item a natural numbers $N\in\N$;

\item a measurable set $F\subseteq \TT$ with $|F|\approx 2^{-N}$;

\item a lacunary sequence $\{n_j\}_{j\in\N}\subset\N$;

\item a sequence $\{a_j\}_{j\in\N}\subset \C$ such that $|a_j|\approx 1$ for any $j\in\N$.
\end{itemize}

Then given any $M\in\N$ one has
\beq\label{sharpZ}
\int_{F}|\sum_{j=1}^{M} a_j\, e^{2\pi\,i\,n_j\,x}|\,dx\lesssim |F|\,\min\{N^{\frac{1}{2}},\,M^{\frac{1}{2}}\}\,M^{\frac{1}{2}}\:.
\eeq
Moreover inequality \eqref{sharpZ} is sharp in terms of parameters $M,\,N$.
\end{prop}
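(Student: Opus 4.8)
The plan is to prove the two halves of \eqref{sharpZ} separately and then combine them, and finally to exhibit extremal configurations for the sharpness claim. The first, trivial bound $\int_{F}|\sum_{j=1}^{M} a_j\, e^{2\pi\,i\,n_j\,x}|\lesssim |F|\,M$ follows instantly from $|a_j|\approx 1$ and the triangle inequality; this is what governs the regime $M\lesssim N$. The substantive bound is $\int_{F}|\sum_{j=1}^{M} a_j\, e^{2\pi\,i\,n_j\,x}|\lesssim |F|\,N^{\frac{1}{2}}\,M^{\frac{1}{2}}$, which is exactly Zygmund's inequality \eqref{zygg} specialized to $\{a_j\}_{j}\in l^2$ with $\|\{a_j\}_{j=1}^{M}\|_{l^2}\approx M^{\frac{1}{2}}$ and $|F|\approx 2^{-N}$, so that $(\log\frac{4}{|F|})^{\frac12}\approx N^{\frac12}$. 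Since the author announces in the footnote to \eqref{zygg} that a short proof of Zygmund's inequality via TFR is given in this very Appendix, I would either invoke that proof or, more self-containedly, run the following TFR-based argument: form the TFR of $F$ relative to the $0$-frequency (Section \ref{resol}), and use the set-resolution to organize the characters $e^{2\pi i n_j x}$ — which, by lacunarity and \eqref{lacadv}, sit on a well-separated set of frequencies — against the tiles $\r^{F}$. The point is that on each interval $I\in\I_k$ appearing in the TFR one has $F$-density $\approx 2^{-k}$, and a John–Nirenberg / Zygmund-type estimate (cf. \eqref{JN} and the discussion around \eqref{wsref}) yields exponential decay of the super-level sets of $|\sum a_j e^{2\pi i n_j x}|$ on $I$ in terms of $(\text{value})^2/\|\{a_j\}\|_{l^2}^2$; integrating this Gaussian tail against $\chi_F$ and summing the geometric decay \eqref{CM} over the tree structure produces precisely the factor $|F|\,N^{1/2}\,\|\{a_j\}\|_{l^2}$.

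Combining, $\int_{F}|\sum_{j=1}^{M} a_j\, e^{2\pi\,i\,n_j\,x}|\lesssim |F|\,\min\{N^{1/2},M^{1/2}\}\,M^{1/2}$: when $M\le N$ use the trivial bound $|F|\,M=|F|\,M^{1/2}\,M^{1/2}$, and when $M>N$ use Zygmund $|F|\,N^{1/2}\,M^{1/2}$; in both cases the right-hand side is $|F|\min\{N^{1/2},M^{1/2}\}M^{1/2}$, giving \eqref{sharpZ}.

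For sharpness I would treat the two regimes with matched examples. In the regime $M\gtrsim N$, take $a_j\equiv 1$ and $\{n_j\}$ as lacunary as one likes (say $n_j=2^j$); then near $x=0$ the partial sum $\sum_{j=1}^{M} e^{2\pi i 2^j x}$ has modulus $\approx M$ on an interval of length $\approx 2^{-M}$, but more usefully one uses the $L^2$ identity $\int_{\TT}|\sum_{j=1}^M e^{2\pi i n_j x}|^2 = M$ together with a reverse-Hölder (Zygmund) lower bound showing $|\sum_{j=1}^M e^{2\pi i n_j x}|$ is of size $\gtrsim M^{1/2}$ on a set of measure $\gtrsim 1$, and then intersect with a set $F$ of measure $2^{-N}$ on which, by the Gaussian heuristic, the sum is typically of size $\approx N^{1/2}M^{1/2}$ — concretely, choosing $F$ to be a level set $\{|\sum_{j\le N} e^{2\pi i n_j x}|\approx N^{1/2}\}$ of the first $N$ terms (which has measure $\approx 2^{-N}\cdot(\text{polynomial})$, adjustable to $\approx 2^{-N}$) forces $\int_F|\sum_{j\le M}|\gtrsim |F|\,N^{1/2}M^{1/2}$ once the tail $\sum_{N<j\le M}$ is controlled in $L^2(F)$ by Zygmund and absorbed. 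In the regime $M\lesssim N$, the trivial bound is sharp: take $F$ to be a tiny interval around $0$ of length $\approx 2^{-N}$, where $\sum_{j=1}^M e^{2\pi i n_j x}\approx M$ pointwise, so $\int_F|\cdots|\approx |F|\,M=|F|\,M^{1/2}M^{1/2}$.

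The main obstacle I anticipate is the sharpness half in the regime $M\gtrsim N$: producing a \emph{single} set $F$ of measure exactly $\approx 2^{-N}$ on which $|\sum_{j=1}^{M} a_j e^{2\pi i n_j x}|$ is as large as $N^{1/2}M^{1/2}$ on a substantial fraction of $F$ requires either a genuine Gaussian lower bound (Paley–Zygmund type) for lacunary series uniform in the truncation, or an explicit construction exploiting the quasi-independence of $\{e^{2\pi i n_j x}\}$; one must also verify that the contribution of the terms $N<j\le M$ does not destroy the lower bound coming from the first $N$ terms, which is where the $L^2(F)$-control via \eqref{zygg} is used again, now on the difference. Everything else is routine once Zygmund's inequality \eqref{zygg} is in hand, which the Appendix supplies via the TFR machinery.
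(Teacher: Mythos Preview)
Your upper bound argument is essentially fine, though note a circularity: in this paper Zygmund's inequality (Corollary \ref{Zyg}) is \emph{derived from} Proposition \ref{setresapp}, not the other way around, so you cannot simply quote \eqref{zygg}. Your TFR sketch is in the right spirit but diverges from what the paper actually does: there is no Gaussian-tail integration or John--Nirenberg step in the Appendix proof. Instead the paper runs the TFR decomposition \eqref{f1}, uses the lacunarity-based ``constancy'' estimate \eqref{KL1} to replace $\int_F$ by $2^{-k}\int_{I_R}$, applies Claims \ref{c1} and \ref{c2} (special cases of Lemmas \ref{controlbasis} and \ref{controlfreqbasis}) to collapse the tree structure, and finishes with a single Cauchy--Schwarz as in \eqref{LAC}. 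This is cleaner than a level-set argument and is the intended illustration of TFR.

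The genuine gap is your sharpness argument for $M>N$. Your proposed set $F=\{|\sum_{j\le N} e^{2\pi i n_j x}|\approx N^{1/2}\}$ has measure $\approx 1$, not $\approx 2^{-N}$: the value $N^{1/2}$ is the \emph{typical} (i.e.\ $L^2$) size of a lacunary sum of $N$ unimodular terms, so this level set is essentially the whole torus. To get measure $\approx 2^{-N}$ via level sets you would need the sum to be of size $\approx N$ (its maximum), and on such a tiny set there is no reason the remaining $M-N$ terms contribute an extra factor $M^{1/2}$. The Paley--Zygmund heuristic you sketch does not produce $N^{1/2}M^{1/2}$ from this configuration.

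The paper's sharpness construction is a genuinely different idea that you are missing: it builds $F$ as an explicit \emph{Cantor-type set} (a generalized arithmetic progression), see \eqref{defF}. With $M=2^s N$, one iterates $N$ times a subdivision of each surviving interval into $2^s$ dyadic children, keeping only right halves. The resulting $F$ has $|F|\approx 2^{-N}$, and for $n_j=2^j$ the TFR of this $F$ is maximally rich: at each of the $N$ levels $k$ one sees $\approx M/N$ frequencies, so the Cauchy--Schwarz step \eqref{LAC} becomes an approximate equality and yields $\approx |F|\,N^{1/2}M^{1/2}$. The key insight is that neither an interval nor an arithmetic progression saturates \eqref{sharpZ} (both give the better bound $|F|(N+M^{1/2})$); one needs a set whose multiscale structure matches the frequency hierarchy, and that is precisely what the Cantor construction provides.
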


Assuming for the moment the above proposition we have the following immediate consequence:

\begin{cor}\label{Zyg} [\textsf{Zygmund's inequality}]

Given a lacunary sequence $\{n_j\}_{j\in\N}\subset\N$ and a sequence $\{a_j\}_{j\in\N}\in l^2(\N)$ one has
\beq\label{Z}
\|\sum_{j=1}^{\infty} a_j\, e^{2\pi\,i\,n_j\,\cdot}\|_{\exp(L^2)}\lesssim \|\{a_j\}_{j}\|_{l^2(\N)}\:.
\eeq
\end{cor}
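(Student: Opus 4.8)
The plan is to derive the Orlicz-norm bound \eqref{Z} directly from the scale-invariant estimate \eqref{sharpZ} in Proposition \ref{setresapp} by the standard duality between the Lorentz-type quantity on the left of \eqref{sharpZ} and the exponential Orlicz norm. Recall that $\|f\|_{\exp(L^2)}\approx \sup_{|F|>0}\frac{1}{|F|(\log\frac{4}{|F|})^{1/2}}\int_F|f|$, where the supremum runs over all measurable $F\subseteq\TT$ of positive measure; equivalently, up to absolute constants, $\|f\|_{\exp(L^2)}$ is the smallest constant $A$ such that $\int_F|f|\le A\,|F|\,(\log\frac{4}{|F|})^{1/2}$ for every such $F$. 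So the first step is to fix $F\subseteq\TT$ with $|F|\approx 2^{-N}$ (hence $N\approx\log\frac{1}{|F|}$) and to estimate $\int_F|\sum_j a_j e^{2\pi i n_j\cdot}|$.

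Next I would reduce to the normalized case $|a_j|\approx 1$ as in Proposition \ref{setresapp}. This is not literally the hypothesis of Corollary \ref{Zyg}, where only $\{a_j\}\in\ell^2$, so the second step is to handle general coefficients. Write $\{a_j\}$ as a superposition of ``flat'' blocks: group the indices $j$ into sets $G_s=\{j:|a_j|\in(2^{-s-1},2^{-s}]\}$ for $s\in\Z$, so that on each $G_s$ the coefficients are $\approx 2^{-s}$, and $\sum_s 2^{-2s}\#G_s\approx\|\{a_j\}\|_{\ell^2}^2$. Applying \eqref{sharpZ} to the block indexed by $G_s$ with $M_s:=\#G_s$ gives $\int_F|\sum_{j\in G_s}a_j e^{2\pi i n_j\cdot}|\lesssim 2^{-s}|F|\min\{N^{1/2},M_s^{1/2}\}M_s^{1/2}\le 2^{-s}|F|\,N^{1/2}M_s^{1/2}$, and the trivial $L^1$ bound gives the complementary estimate $\lesssim 2^{-s}|F|\,M_s$. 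Combining these with the triangle inequality over $s$ and then Cauchy–Schwarz, $\sum_s 2^{-s}\min\{N^{1/2}M_s^{1/2},M_s\}\le N^{1/2}\sum_s 2^{-s}M_s^{1/2}$ is not quite summable by itself, so one actually needs to split further: for each $s$ also dyadically decompose $G_s$ into pieces of size $\approx 2^t$ and use $\min\{N^{1/2}2^{t/2},2^t\}$, which sums geometrically in $t$ on both sides of the crossover $2^t\approx N$, producing an extra $\approx(\log(4/|F|))^{1/2}$ and leaving $\int_F|\sum_j a_j e^{2\pi i n_j\cdot}|\lesssim |F|\,(\log\tfrac{4}{|F|})^{1/2}\,\|\{a_j\}\|_{\ell^2}$. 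By the duality characterization of $\exp(L^2)$ recalled above, this is exactly \eqref{Z}.

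The main obstacle I anticipate is the bookkeeping in the two-parameter dyadic decomposition of the coefficient sequence: one must be careful that the double sum over the amplitude scale $s$ and the cardinality scale $t$ recombines to give precisely one power of $(\log\frac1{|F|})^{1/2}$ and a clean $\ell^2$ factor, rather than losing an extra logarithm. The crossover between the ``Zygmund regime'' ($M\lesssim N$, where the square-function behavior dominates) and the ``trivial $L^1$ regime'' ($M\gtrsim N$) has to be located correctly for each block, and the geometric summation on each side of it is what prevents divergence; this is exactly the heuristic already rehearsed in the introduction around \eqref{zygg}, now made rigorous at the level of a single fixed set $F$. A secondary point is to confirm that the sharpness claim at the end of Proposition \ref{setresapp} indeed forces $\exp(L^2)$ and not a larger Orlicz class, but since we are only asked for the upper bound \eqref{Z} this does not enter the proof. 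Finally, one should note that the whole argument only uses \eqref{sharpZ} as a black box, so the genuinely new input — the time-frequency regularization of $F$ and the set-resolution of the time-frequency plane — is entirely packaged inside the proof of Proposition \ref{setresapp}, and Corollary \ref{Zyg} is a soft consequence.
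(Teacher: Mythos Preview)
There is a genuine gap in your black-box reduction. After the amplitude decomposition into blocks $G_s=\{j:|a_j|\in(2^{-s-1},2^{-s}]\}$ with $M_s=\#G_s$, applying \eqref{sharpZ} and the triangle inequality leaves you with
\[
\sum_s 2^{-s}\min\{N^{1/2}M_s^{1/2},\,M_s\}\le N^{1/2}\sum_s 2^{-s}M_s^{1/2}=N^{1/2}\sum_s\bigl(2^{-2s}M_s\bigr)^{1/2},
\]
and the right-hand side is a sum of square roots, which is \emph{not} controlled by $\bigl(\sum_s 2^{-2s}M_s\bigr)^{1/2}=\|\{a_j\}\|_{\ell^2}$: for a sequence supported on $K$ amplitude levels with $2^{-2s}M_s\approx K^{-1}$ one loses a factor $K^{1/2}$, and $K$ is unbounded. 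Your proposed fix---further dyadically slicing each $G_s$ into pieces of cardinality $\approx 2^t$---cannot recover this, because applying the triangle inequality to the pieces and then summing gives back at best the original block estimate (indeed, $(M_s/2^t)\cdot\min\{N^{1/2}2^{t/2},2^t\}$ is minimized at $2^t=M_s$ where it returns $\min\{N^{1/2}M_s^{1/2},M_s\}$). So the summation in $s$ simply does not close from the statement of Proposition~\ref{setresapp} alone.

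The paper's proof avoids this issue entirely: it observes that the \emph{proof} of Proposition~\ref{setresapp} never actually uses the normalization $|a_j|\approx 1$. The chain \eqref{f2}--\eqref{LAC} carries the quantity $\sum_{j\in\cdot}|a_j|^2$ throughout, and the final $M^{1/2}$ in \eqref{sharpZ} is just $\|\{a_j\}\|_{\ell^2}$ under that normalization. Hence one reads off directly the stronger estimate \eqref{sharpZmod},
\[
\int_F\Bigl|\sum_j a_j e^{2\pi i n_j x}\Bigr|\,dx\lesssim |F|\Bigl(\log\tfrac{4}{|F|}\Bigr)^{1/2}\|\{a_j\}\|_{\ell^2},
\]
and then the $\exp(L^2)$ bound follows by the $L(\log L)^{1/2}$--$\exp(L^2)$ duality you already identified. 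The moral: you must go inside the proof of Proposition~\ref{setresapp}, not treat it as a black box on flat coefficient sequences.
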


\begin{proof}

Inspecting the \emph{proof} of our Proposition \ref{setresapp} we deduce that the following stronger relation holds:
\beq\label{sharpZmod}
\int_{F}|\sum_{j=1}^{\infty} a_j\, e^{2\pi\,i\,n_j\,x}|\,dx\lesssim |F|\,(\log\frac{4}{|F|})^{\frac{1}{2}}\,
(\sum_{j=1}^{\infty}|a_j|^2)^{\frac{1}{2}}\:.
\eeq
Exploiting now the duality between the Orlicz (Lorentz) spaces $L (\log L)^{\frac{1}{2}}$ and $\exp(L^2)$ we conclude that \eqref{Z} holds.
\end{proof}

\textbf{Proof of Proposition \ref{setresapp}}

We first notice that the case $M\leq N$ is trivial since in this situation \eqref{sharpZ} reduces simply to $L^1-L^{\infty}$ H\"older inequality. The fact that the estimate is sharp can be easily verified in the instance $F$ interval, say $F=[0,2^{-N}]$ and $n_j=2^{j}$ for $j\in\{1,\ldots,M\}$ with $M\leq N$.

Thus, we remain with the following statement to prove: for any $M>N$ one has
\beq\label{sharpZ1}
\int_{F}|\sum_{j=1}^{M} a_j\, e^{2\pi\,i\,n_j\,x}|\,dx\lesssim |F|\, N\,(\frac{M}{N})^{\frac{1}{2}}\:.
\eeq

Apply now the \textbf{TFR} algorithm for the set $F$ and recall the properties listed in Section \ref{propTFR}.

Set now the set of frequencies
\beq\label{defFfre}
\F:=\{n_j\}_{j=1}^{M}\,,
\eeq
where here $\{n_j\}_{j=1}^{M}$ is as given in our hypothesis.

With these done, we have
\beq\label{f1}
\eeq
$$S_{F}:=\int_{F}|\sum_{j=1}^{M} a_j\, e^{2\pi\,i\,n_j\,x}|\,dx\leq$$
$$\sum_{k=1}^{k_F}\sum_{I\in\I_k}\sum_{n}\sum_{{s_j\in\{L,\,U\}}\atop{1\leq j\leq n}}\sum_{\underline{R}\in\r_k^{s_1\ldots s_{n}}[I]} \sum_{R\in\r_k^{chi}(\underline{R})}\int_{F}|\sum_{j\in \f[R]} a_j\, e^{2\pi\,i\,n_j\,x}|\;.$$

\begin{obs}\label{lac} [\textsf{Relevance of lacunarity}]
Here we want to record the two key relations where we encode the information that $\F$ consists of lacunary frequencies:
\begin{itemize}
\item using similar reasonings with those from the proofs of Lemmas \ref{treecutp1} and \ref{difcont1}, one has for each $R\in\r_k[I]$
\beq\label{KL1}
\int_{F}|\sum_{j\in \f[R]} a_j\, e^{2\pi\,i\,n_j\,x}|\lesssim 2^{-k}\,\int_{I_{R}}|\sum_{j\in \f[R]} a_j\, e^{2\pi\,i\,n_j\,x}|\;.
\eeq
\item with the previous notations, one has
\beq\label{lac}
\int_{I_{\underline{R}}}|\sum_{j\in \f[R]} a_j\, e^{2\pi\,i\,n_j\,x}|^2\approx |I_{\underline{R}}|\,\sum_{j\in \f[R]} |a_j|^2\:.
\eeq
\end{itemize}
\end{obs}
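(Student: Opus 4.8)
The plan is to prove the two relations \eqref{KL1} and \eqref{lac} separately, but both will rest on one elementary preliminary: an exact description of the frequency band of $g_R(x):=\sum_{j\in\f[R]}a_j\,e^{2\pi i\,n_j\,x}$. First I would unwind the definition $n_j\,\overline{\in}\,R$: since $R$ and $\underline{R}$ are $0$-frequency tiles, $\o_R$ and $\o_{\underline{R}}$ are the dyadic intervals containing the origin of lengths $|I_R|^{-1}$ and $|I_{\underline{R}}|^{-1}$, i.e. $\o_R=[0,|I_R|^{-1})$ and $\o_{\underline{R}}=[0,|I_{\underline{R}}|^{-1})$; since $n_j\in\N$ this forces $|I_{\underline{R}}|^{-1}\le n_j<|I_R|^{-1}$ for every $n_j\in\f[R]$. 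Thus $g_R$ is band-limited to an interval of length $<|I_R|^{-1}$ and all of its frequencies are $\ge|I_{\underline{R}}|^{-1}$. These are the only two structural features of the \textbf{TFR} tiles I will use.

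For \eqref{KL1} I would first note that in the way $g_R$ enters the bound on $S_F$ it is always localized to $I_R$ — the tiles of a fixed layer $\r_k^{s_1\ldots s_r}[I]$ have pairwise disjoint time intervals of common length $|I_R|$, whose union is contained in $\bar{\I}_{k-1}(I)$ — so the inequality to establish is really $\int_{F\cap I_R}|g_R|\lesssim 2^{-k}\int_{I_R}|g_R|$, and I would get it in two steps. Step one: after modulating by $e^{-2\pi i\,n_{\min}\,x}$ with $n_{\min}=\min\f[R]$, one has $|g_R|=|h|$ with $h$ Fourier-supported in $[0,|I_R|^{-1})$, and since the bandwidth of $h$ times $|I_R|$ is $<1$, a (local) Nikolskii-type reverse Hölder inequality for band-limited functions gives $\|g_R\|_{L^\infty(I_R)}\lesssim|I_R|^{-1}\int_{I_R}|g_R|$ with an absolute implied constant; this is the rigorous version of ``$g_R$ is morally constant on $I_R$'', the same heuristic that drives Lemmas \ref{treecutp1}--\ref{difcont1}. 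Step two: by construction $I_R$ is a dyadic interval that contains an interval of $\I_{k-1}(I)$ (possibly $I_R$ itself), and maximality of the $\I_{k-1}(I)$-intervals among dyadic sets of $F$-density $>2^{-(k-1)}$ forces $|F\cap I_R|\lesssim 2^{-k}|I_R|$. Multiplying the two steps yields \eqref{KL1}.

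For \eqref{lac} I would instead exploit that the frequencies of $g_R$ are lacunary and all $\ge|I_{\underline{R}}|^{-1}$, matched to the length $|I_{\underline{R}}|$ of the interval of integration. The upper bound ``$\lesssim$'' is immediate from the large-sieve inequality: the $n_j\in\f[R]$ are $\delta$-separated with $\delta\ge(\a-1)|I_{\underline{R}}|^{-1}$, so $\int_{I_{\underline{R}}}|g_R|^2\le\bigl(|I_{\underline{R}}|+C\delta^{-1}\bigr)\sum_{j\in\f[R]}|a_j|^2\lesssim_{\a}|I_{\underline{R}}|\sum_{j\in\f[R]}|a_j|^2$. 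For the matching lower bound I would, as elsewhere in the paper, first split the ambient lacunary sequence into $O_{\a}(1)$ subsequences of ratio $\ge 100$ (cf. the discussion around \eqref{lacadv}); on each piece the frequencies are $\gg|I_{\underline{R}}|^{-1}$-separated on an interval of length $|I_{\underline{R}}|$, so Ingham's inequality — implemented with a smooth minorant of $\chi_{I_{\underline{R}}}$ whose Fourier transform is supported in $(-|I_{\underline{R}}|^{-1},|I_{\underline{R}}|^{-1})$ — gives $\int_{I_{\underline{R}}}|g_R|^2\gtrsim|I_{\underline{R}}|\sum|a_j|^2$, while the finitely many cross terms between distinct pieces are absorbed using their mutual frequency gap. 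Reassembling the pieces gives \eqref{lac}.

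I expect the main obstacle to be the reverse-Hölder step of \eqref{KL1}: one must bound $\|g_R\|_{L^\infty(I_R)}$ by the \emph{average} of $|g_R|$ over $I_R$ with an absolute constant, uniformly even when $g_R$ exhibits strong cancellation on $I_R$ (so that the naive Bernstein--Taylor estimate, whose error is measured by $\|g_R\|_{L^\infty(\TT)}$ rather than by $\|g_R\|_{L^\infty(I_R)}$, is not enough). The clean route is the local Nikolskii inequality for functions band-limited to an interval of length $\le|I_R|^{-1}$ — proved by multiplying by a Fejér-type cutoff adapted to $I_R$ and invoking the global Nikolskii inequality, or equivalently by observing that after modulation $g_R|_{I_R}$ is uniformly well approximated by a polynomial of bounded degree, for which reverse Hölder is classical. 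Everything else — the frequency-band computation, the density bound from the \textbf{TFR} algorithm, and the large-sieve and Ingham estimates for lacunary exponentials — is routine.
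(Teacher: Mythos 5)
Your frequency-band computation, the density bound $|F\cap I_R|\lesssim 2^{-k}|I_R|$ (Step Two of \eqref{KL1}), and the large-sieve upper bound in \eqref{lac} are all fine. The genuine gap is Step One of \eqref{KL1}: the ``local Nikolskii'' reverse H\"older inequality $\|g_R\|_{L^{\infty}(I_R)}\lesssim |I_R|^{-1}\int_{I_R}|g_R|$ with an \emph{absolute} constant is false for functions band-limited to a band of width $\le |I_R|^{-1}$, which is the only hypothesis you invoke. After rescaling $I_R$ to $[0,1]$, the function $h(x)=\prod_{j=1}^{n}\frac{e^{2\pi i x/n}-1}{2\pi i/n}$ has spectrum in $[0,1]$ and satisfies $|h(x)|\approx x^{n}$ on $[0,1]$, so $\|h\|_{L^{\infty}([0,1])}\big/\int_0^1|h|\approx n$; lacunarity alone does not rescue the statement either (geometric frequencies $t_j=2^{-j}$ with divided-difference coefficients $a_j=\prod_{i\neq j}(t_j-t_i)^{-1}$ kill the first $K$ Taylor coefficients at the left endpoint and again give ratio $\approx K$). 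Neither of your two repairs closes this: the Fej\'er-cutoff route bounds $\|g_R\|_{L^{\infty}(I_R)}$ by $|I_R|^{-1}\int_{\TT}|g_R|\,|\phi|$, and the tail of $\phi$ outside $I_R$ picks up mass of $g_R$ not dominated by $\int_{I_R}|g_R|$; the bounded-degree Taylor approximation gives only $\|g_R\|_{L^{\infty}(I_R)}\lesssim |I_R|^{-1}\int_{I_R}|g_R|+\varepsilon\sum_j|a_j|$, and under cancellation the additive term $\sum_j|a_j|$ can dwarf $|I_R|^{-1}\int_{I_R}|g_R|$, so it cannot be absorbed multiplicatively.

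This is exactly why the paper justifies \eqref{KL1} by pointing to Lemmas \ref{treecutp1} and \ref{difcont1} rather than to a reverse H\"older inequality: there one does \emph{not} absorb the oscillation, one keeps it as a separate summable error. Concretely, on $I_R$ one writes $g_R$ as its average over $I_R$ plus an oscillation bounded by $\sum_{j\in\f[R]}n_j|a_j|\,|I_R|\lesssim \max_j|a_j|$ (using \eqref{lacadv} and $n_j<|I_R|^{-1}$); for the constant part \eqref{KL1} is immediate from $|I_R|\cdot|\mathrm{avg}_{I_R}g_R|\le\int_{I_R}|g_R|$ and the density bound, while the error contributes $2^{-k}|I_R|\max_j|a_j|$ per tile, which is then summed over all tiles exactly as the term $U$ is handled in Lemma \ref{difcont1}. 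So \eqref{KL1} is available only up to admissible, separately summed errors, not as a clean inequality for each individual $R$, and your argument asserts the latter. A secondary, repairable issue concerns the lower bound in \eqref{lac}: when the lacunarity ratio $\alpha$ is close to $1$ and the frequencies sit near $|I_{\underline{R}}|^{-1}$, adjacent frequencies differ by only $(\alpha-1)|I_{\underline{R}}|^{-1}$, their correlation over $I_{\underline{R}}$ is $1-O((\alpha-1)^2)$, and the cross terms between your ratio-$\ge 100$ subsequences are comparable to the diagonal rather than dominated by it; one needs an $\alpha$-dependent Gram-matrix (or finite-dimensional nondegeneracy) argument there, not just the ``mutual frequency gap.''
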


Using now \eqref{f1} and \eqref{KL1} one deduces
\beq\label{f2}
S_{F}\lesssim\sum_{k=1}^{k_F}\sum_{I\in\I_k}\sum_{n}\sum_{{s_j\in\{L,\,U\}}\atop{1\leq j\leq n}}
\sum_{\underline{R}\in\r_k^{s_1\ldots s_{n}}[I]} \sum_{R\in\r_k^{chi}(\underline{R})}  2^{-k}\,\int_{I_{R}}|\sum_{j\in \f[R]} a_j\, e^{2\pi\,i\,n_j\,x}|\:.
\eeq

The following result is a subcase of the more involved Lemma \ref{controlbasis} whose proof was provided in Section \ref{thL2}:

\begin{claim}\label{c1} Fix $I\in\I_k$, $n\in\N$, $s_j\in\{U,L\}$ and take $\r_k^{s_1\ldots s_{n}}[I]$.
 With the definitions and notations from the previous section the following holds:
\beq\label{claimm}
\eeq
$$\sum_{\underline{R}\in\r_k^{s_1\ldots s_{n}}[I]} \sum_{R\in\r_k^{chi}(\underline{R})}
 \int_{I_{R}}|\sum_{j\in \f[R]} a_j\, e^{2\pi\,i\,n_j\,x}|$$
$$\lesssim (\sum_{\underline{R}\in \r_k^{s_1\ldots s_{n}}[I]} |I_{\underline{R}}|)\,(\sum_{j\in \F(\r_k^{s_1,\ldots s_{n},U}[I])}|a_j|^2)^{\frac{1}{2}}\:.$$
\end{claim}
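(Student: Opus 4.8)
\textbf{Proof proposal for Claim \ref{c1}.}

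The plan is to reduce Claim \ref{c1} to a combination of the lacunarity encoded in Observation \ref{lac} together with the geometric decay \eqref{CM} and a Chebyshev/layer-cake argument exactly parallel to the proof of Lemma \ref{controlbasis}. First I would observe that, since for all $R\in\r_k^{chi}(\underline{R})$ lying in a common $\r_k^{s_1\ldots s_n}[I]$ the frequency set $\f[R]=\F(\r_k^{s_1\ldots s_n}[I])$ is the same (by the items in Section \ref{propTFR} following Definition \ref{freqr}), the exponential sum $P_R(x):=\sum_{j\in\f[R]} a_j e^{2\pi i n_j x}$ depends only on the label $\s=(s_1,\ldots,s_n)$, not on $R$ itself; hence I may write $P_\s$ for it. Using the base-support structure, the left side of \eqref{claimm} regroups over all descendant labels $\s'=(s_1,\ldots,s_n,U,L,\ldots,L)$ of $\s$ (the ``first letter after $\s$ is $U$, rest are $L$'' pattern, cf. the discussion around \eqref{freqcontr}), i.e.
\beq\label{cpf1}
\text{LHS}=\sum_{\substack{s'_1=s_1,\ldots,s'_n=s_n,\,s'_{n+1}=U\\ s'_u=L\text{ if }u>n+1\\ r'>n}}\int_{I_{\r_k^{s'_1\ldots s'_{r'}}[I]}}\bigl|P_{\s'}(x)\bigr|\,dx\,.
\eeq
The intervals $I_{\r_k^{s'_1\ldots s'_{r'}}[I]}$ satisfy the geometric decay $|I_{\r_k^{\s'}[I]}|\lesssim 2^{-(r'-n)/2}\,|I_{\r_k^{\s}[I]}|$ coming from \eqref{CM} (this is \eqref{spacIRdec} with the obvious relabeling).

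Next, on each piece I would use the crucial fact that $P_{\s'}$ is supported in frequency on $\f[R']\subseteq\F(\r_k^{\s,U}[I])$, so by \eqref{lac} applied over the relevant base tiles,
\beq\label{cpf2}
\|P_{\s'}\|_{L^2(I_{\r_k^{\s'}[I]})}^2\lesssim |I_{\r_k^{\s'}[I]}|\sum_{j\in\F(\r_k^{\s,U}[I])}|a_j|^2\,.
\eeq
Then for each $\s'$ I would run the Chebyshev layer decomposition: set $I^w_{\r_k^{\s'}[I]}:=\{x\in I_{\r_k^{\s}[I]}:|P_{\s'}(x)|\approx 2^w\,\|P_{\s'}\|_{L^2(I_{\r_k^{\s}[I]})}/|I_{\r_k^{\s}[I]}|^{1/2}\}$, so that $|I^w_{\r_k^{\s'}[I]}|\lesssim 2^{-2w}|I_{\r_k^{\s}[I]}|$ by Chebyshev, and also $|I^w_{\r_k^{\s'}[I]}|\leq |I_{\r_k^{\s'}[I]}|\lesssim 2^{-(r'-n)/2}|I_{\r_k^{\s}[I]}|$ from \eqref{CM}. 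Summing $\int_{I^w}|P_{\s'}|\approx 2^w\,|I^w|\cdot\|P_{\s'}\|_{L^2}/|I_{\r_k^{\s}[I]}|^{1/2}$ over $w$ with the bound $|I^w|/|I_{\r_k^{\s}[I]}|\lesssim\min\{2^{(n-r')/2},2^{-2w}\}$, and then over $r'>n$ with the extra $2^{-(r'-n)/2}$ from \eqref{cpf2}, the double geometric series converges absolutely; this yields
\beq\label{cpf3}
\text{LHS}\lesssim |I_{\r_k^{\s}[I]}|^{1/2}\,\|P_{\s,U}\|_{L^2(I_{\r_k^{\s}[I]})}\lesssim |I_{\r_k^{\s}[I]}|\,\Bigl(\sum_{j\in\F(\r_k^{\s,U}[I])}|a_j|^2\Bigr)^{1/2}\,,
\eeq
where in the last step I used \eqref{lac} once more. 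Finally, summing this over the base tiles $\underline{R}\in\r_k^{\s}[I]$ (which partition $I_{\r_k^{\s}[I]}$) and noting $\sum_{\underline R\in\r_k^{\s}[I]}|I_{\underline R}|=|I_{\r_k^{\s}[I]}|$ gives precisely the right side of \eqref{claimm}.

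The main obstacle, as in Lemma \ref{controlbasis}, is bookkeeping the two independent sources of smallness — the frequency-separation decay from lacunarity via \eqref{lac}, and the spatial decay $2^{-(r'-n)/2}$ from \eqref{CM} — so that the triple sum over $r'$, over $w$, and over the branches $\s'$ actually converges; the $\min$-trick $|I^w|/|I_{\r_k^{\s}[I]}|\lesssim\min\{2^{(n-r')/2},2^{-2w}\}$ is what makes both sums summable simultaneously. Once that is set up, everything else is a direct transcription of the computation already carried out in \eqref{VIR}--\eqref{claimm2111}, so I would simply remark that Claim \ref{c1} is the special case of Lemma \ref{controlbasis} obtained by taking $g$ and the operator $T_{cc}$ replaced by the plain exponential sum, and point to that proof for the details.
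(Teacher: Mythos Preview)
Your proposal is correct and follows essentially the same route as the paper, which simply declares Claim~\ref{c1} a subcase of Lemma~\ref{controlbasis}: you regroup the children by their descendant labels $\s'=(\s,U,L,\ldots,L)$ as in \eqref{VIR}, run the Chebyshev level-set decomposition \eqref{spacIRdecw}--\eqref{ChebspacIRdecw}, invoke the geometric decay \eqref{CM}/\eqref{spacIRdec}, and close with the $\min$-trick summation, with \eqref{lac} playing the role of \eqref{dom} for the plain exponential sum. One small slip: in your opening sentence the frequency set should be $\F(\r_k^{\s'}[I])$ (the label of the child $R$), not $\F(\r_k^{\s}[I])$, and your bound $|I^w_{\s'}|\leq |I_{\r_k^{\s'}[I]}|$ requires the level set to be intersected with $I_{\r_k^{\s'}[I]}$ (the original domain of integration) --- both are harmless once noted.
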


Then we immediately deduce that

\beq\label{f3}
\eeq
$$\sum_{k=1}^{k_F}\sum_{I\in\I_k}\sum_{n}\sum_{{s_j\in\{L,\,U\}}\atop{1\leq j\leq n}}
\sum_{\underline{R}\in\r_k^{s_1\ldots s_{n}}[I]} \sum_{R\in\r_k^{chi}(\underline{R})}  2^{-k}\,\int_{I_{R}}|\sum_{j\in \f[R]} a_j\, e^{2\pi\,i\,n_j\,x}|$$
$$\lesssim \sum_{k=1}^{k_F} 2^{-k}\,\sum_{I\in\I_k}\sum_{n}\sum_{{s_j\in\{L,\,U\}}\atop{1\leq j\leq n}}
(\sum_{\underline{R}\in \r_k^{s_1\ldots s_{n}}[I]}\,|I_{\underline{R}}|)\,(\sum_{j\in \f[\r_k^{s_1\ldots s_{n},U}[I]]} |a_j|^2)^{\frac{1}{2}}$$

The next result is a particular instance of Lemma \ref{controlfreqbasis} whose proof was also provided in Section \ref{thL2}:

\begin{claim}\label{c2} Let $I\in\I_k$ and $\r_k^{s_1\ldots s_{n}}[I]$ as before. Then, the following holds:
\beq\label{claimm2}
\eeq
$$\sum_{{{s'_1=s_1\ldots s'_n=s_n}\atop{s'_{n+1}=L}}\atop{n'>n}}(\sum_{\underline{R}\in \r_k^{s'_1\ldots s'_{n'}}[I]} |I_{\underline{R}}|)\,(\sum_{j\in \F(\r_k^{s'_1,\ldots s'_{n'},U}[I])}|a_j|^2)^{\frac{1}{2}}$$
$$\lesssim (\sum_{\underline{R}\in \r_k^{s_1\ldots s_{r}}[I]} |I_{\underline{R}}|)\,(\sum_{j\in \F(\r_k^{s_1,\ldots s_{n},U}[I])}|a_j|^2)^{\frac{1}{2}}\:.$$
\end{claim}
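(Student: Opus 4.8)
\textbf{Proof plan for Claim \ref{c2}.}

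The plan is to verify that Claim \ref{c2} is nothing more than the statement of Lemma \ref{controlfreqbasis} restricted to the $0$-frequency setting $N\equiv 0$, with the trigonometric polynomial $\sum_{j} a_j e^{2\pi i n_j x}$ playing the role of the operator output ${T_{cc}^{\bar{\p}_{k}^{2,n,[p]}[\cdot]}}^{*}(g)$. First I would make the dictionary precise: for a fixed base $\underline{R}\in\r_k^{s_1\ldots s_n}[I]$ and its children $R\in\r_k^{chi}(\underline R)$, the restricted sum $\sum_{j\in\f[R]} a_j e^{2\pi i n_j x}$ depends only on $\f[R]$, and by \eqref{eqdist} and the definition \eqref{freqr} this frequency set is constant across all $R\in\r_k^{s'_1\ldots s'_{r'}}[I]$ for fixed $\s'$. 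So the object
$$\Phi_{\s'}(x):=\sum_{j\in\F(\r_k^{s'_1\ldots s'_{n'},U}[I])} a_j e^{2\pi i n_j x}$$
is well-defined, and plays exactly the role that $T_{cc}^{\bar{\p}_{k}^{2,n,[p]}[\r_k^{s'_1\ldots s'_{n'},U}[I]]}(g)$ plays in the proof of Lemma \ref{controlfreqbasis}. The only input of that proof specific to the operator was the $L^2$-to-$L^\infty$ "almost constancy" on the children intervals together with the three structural facts in \eqref{obs3imp}: the geometric decay \eqref{spacIRdec} (coming from \eqref{CM}), the disjointness of spatial supports and of the associated frequency/tile families for distinct sequences of equal length, and the nesting \eqref{ks1}. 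All of these are properties of the TFR construction alone and hold verbatim here; the disjointness of frequencies across distinct $\s'$ of equal length is exactly \eqref{freqcontr} read in the contrapositive, and the monotone inclusion of frequency sets along the $L$-branch is \eqref{freqcontr} again.

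Concretely, the key steps, in order, would be: (1) record that $\|\Phi_{\s'}\|_{L^2(I_{\r_k^{s_1\ldots s_r}[I]})}^2 \approx |I_{\r_k^{s_1\ldots s_r}[I]}|\,\sum_{j\in\F(\r_k^{s'_1\ldots s'_{n'},U}[I])}|a_j|^2$ by the lacunary near-orthogonality relation \eqref{lac} of Observation \ref{lac} — this is the analogue of the $L^2$-estimate that drove \eqref{dom} and \eqref{k2}; (2) invoke the frequency-disjointness \eqref{freqcontr}/\eqref{kk21} so that, for fixed branch length $r_0=n'$, $\sum_{\s':\,|\s'|=r_0}\sum_{j\in\F(\r_k^{s'_1\ldots s'_{n'},U}[I])}|a_j|^2 \le \sum_{j\in\F(\r_k^{s_1\ldots s_r,U}[I])}|a_j|^2$, which is the analogue of \eqref{k2}; (3) use the measure decay \eqref{CM}, i.e. $|I_{\r_k^{s'_1\ldots s'_{r'}}[I]}|\lesssim 2^{-(r'-r)/2}|I_{\r_k^{s_1\ldots s_r}[I]}|$ and $\sum_{\s':|\s'|=r_0}|I_{\r_k^{s'_1\ldots s'_{r'}}[I]}|\lesssim |I_{\r_k^{s_1\ldots s_r}[I]}|$; (4) apply Cauchy--Schwarz in the index $\s'$ at each fixed level $r_0$ exactly as in \eqref{claimm2111}, producing a factor $2^{-(r_0-r)/4}$, and sum the resulting geometric series in $r_0$. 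This reproduces the chain of inequalities culminating in the right-hand side of \eqref{claimm2}.

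I do not expect any genuine obstacle: Claim \ref{c2} is strictly weaker than Lemma \ref{controlfreqbasis}, since the generic operator $T_{cc}^{\bar{\p}}$ is replaced by a bare exponential sum for which the $L^2$-bound \eqref{lac} is an \emph{exact} orthogonality statement rather than an estimate requiring the mass machinery. The one point deserving a line of care is that the passage from $\int_{I_R}|\Phi|$ summed over children to the $L^2$-norm over $I_{\underline R}$ requires the "morally constant on $I_R$, oscillating on $I_{\underline R}$" behavior — but this is precisely Observation \ref{lac}, relation \eqref{KL1} together with \eqref{lac}, and is exactly parallel to Observation \ref{osck} in the operator setting. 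Hence the proof is: reduce to Lemma \ref{controlfreqbasis} by the above dictionary, note that the only ingredients used there are \eqref{CM}, \eqref{freqcontr}, the equal-length disjointness, and an $L^2$-norm identity, all available here via \eqref{lac}, and quote \eqref{claimm2111} verbatim. One then combines \eqref{f3} with Claim \ref{c2} and the geometric decay to sum over $n$, $I\in\I_k$, and $k$, picking up the factor $k_F\approx\log\frac{4}{|F|}$ from the two summations over scales, together with $\sum_k 2^{-k}|\bar\I_k|\lesssim|F|\log\frac{4}{|F|}$-type bookkeeping and the crude bound $\sum_{j\in\F}|a_j|^2\le M$, to conclude \eqref{sharpZ1} and hence \eqref{sharpZ}; the sharpness in $M,N$ is checked on the explicit example $F=[0,2^{-N}]$, $n_j=2^j$.
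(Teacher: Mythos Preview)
Your approach is correct and is exactly the paper's: the text introducing Claim \ref{c2} says only that it ``is a particular instance of Lemma \ref{controlfreqbasis} whose proof was also provided in Section \ref{thL2},'' with no separate argument given. Your dictionary---replacing ${T_{cc}^{\bar{\p}_{k}^{2,n,[p]}[\cdot]}}^{*}(g)$ by the exponential sum $\Phi_{\s'}$ and using the lacunary $L^2$ identity \eqref{lac} in place of the operator $L^2$-bound that drove \eqref{dom} and \eqref{k2}---is precisely the intended specialization, and your four enumerated steps reproduce the chain \eqref{claimm2111}.

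One tangential correction, unrelated to Claim \ref{c2} itself: in your last sentence you say the sharpness of \eqref{sharpZ} is checked on the interval $F=[0,2^{-N}]$. That example saturates the bound only in the trivial regime $M\le N$; for the interesting regime $M>N$ the paper requires a Cantor-type set (generalized arithmetic progression) as described around \eqref{defF}.
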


Implementing \eqref{claimm2} in \eqref{f3} and putting this together with \eqref{f2}, we deduce that
\beq\label{f4}
S_{F}\lesssim\sum_{k=1}^{k_F}2^{-k}\,\sum_{I\in\I_k}\sum_{n}
(\sum_{\underline{R}\in \r_k^{n U}[I]} |I_{\underline{R}}|)\,(\sum_{j\in \F(\r_k^{(n+1)U}[I])}|a_j|^2)^{\frac{1}{2}}\:,
\eeq
where here we recall the notation \eqref{levru}.

We will use now the following two observations

\begin{itemize}
\item for any $1\leq k\leq k_{F}$ any\footnote{Recall the convention made in \eqref{conv1}.} $n\geq -1$ and any $I\in\I_k$, based on \eqref{CM} one has
\beq\label{kk1}
\sum_{R\in \r_k^{(n+2) U}[I]} |I_R|\leq \frac{1}{4}\sum_{R\in \r_k^{n U}[I]} |I_R|\:.
\eeq
\item for any $1\leq k\leq k_{F}$ any $n\not=n'$ and any $I\in\I_k$ one has
\beq\label{kk2}
\F(\r_k^{n U}[I])\cap \F(\r_k^{n' U}[I])=\emptyset\:.
\eeq
\end{itemize}

From the above itemization we deduce that it is enough to estimate $S_{F}^{n}$ for $n=-1$, where
\beq\label{fnn0}
S_{F}^{n}:=\sum_{k=1}^{k_F}2^{-k}\,\sum_{I\in\I_k}
(\sum_{R\in \r_k^{n U}[I]} |I_R|)\,(\sum_{j\in \F(\r_k^{(n+1) U}[I])}|a_j|^2)^{\frac{1}{2}}\:,
\eeq
since from \eqref{kk1} we will get a geometric decay in $n$.

Now, using Cauchy-Schwarz, we deduce that
\beq\label{LAC}
\eeq
$$S_{F}^{0}\lesssim \sum_{k=1}^{k_F}\sum_{I\in\I_k}
2^{-k}\,|I|\,(\sum_{j\in \F(\r_k^{1}[I])}|a_j|^2)^{\frac{1}{2}}$$
$$\lesssim (\sum_{k=1}^{k_F}\sum_{I\in\I_k} 2^{-k}\,|I| )^{\frac{1}{2}}\,(\sum_{k=1}^{k_F}
\sum_{I\in\I_k}|I\cap F|\sum_{j\in \F(\r_k^{1}[I])}|a_j|^2)^{\frac{1}{2}}$$
$$\lesssim (\sum_{k=1}^{k_F}\sum_{I\in\I_k}\,|I\cap F| )^{\frac{1}{2}}\,(\int_{F} \sum_{k=1}^{k_F}
\sum_{I\in\I_k}\chi_{I}\sum_{j\in \F(\r_k^{1}[I])}|a_j|^2)^{\frac{1}{2}}$$
$$\lesssim k_F^{\frac{1}{2}}\,|F|\,M^{\frac{1}{2}}\:.$$

This ends the proof of inequality \eqref{sharpZ}.

It remains now to show that \eqref{sharpZ1} is sharp.

We start with a general comment and continue with few remarks: in understanding the best possible upper bound for our inequality \eqref{sharpZ} the fundamental role is played by the interaction between the \textit{structure of the set} $F$ and the corresponding \textit{structure of the frequencies}  $\{n_j\}_j$. Indeed, let us notice the following:
\begin{itemize}
\item if $F$ is an \textit{interval}, then for any lacunary sequence $\{n_j\}_j$ relation \eqref{sharpZ} can be upgraded to:
\beq\label{sharpZz}
\int_{F}|\sum_{j=1}^{M} a_j\, e^{2\pi\,i\,n_j\,x}|\,dx\lesssim |F|\,(N\,+\,M^{\frac{1}{2}})\:.
\eeq
\item same holds if $F$ consists of a union of equidistant same-size intervals - this is what we call the arithmetic progression case.

\item in symmetry with this, if we fix $\{n_j=2^j\}_{j=1}^{M}$ then, depending on the structure of $F$, the best upper bound in \eqref{sharpZ} is
    \begin{itemize}
    \item $|F|\,(N\,+\,M^{\frac{1}{2}})$ if $F$ an interval or an arithmetic progression with low entropy (the number of equidistant same-size intervals that form $F$ is much smaller than $\min\{M, N\}$)

    \item $|F|\,M^{\frac{1}{2}}$ - which is the best possible bound - if $F$ is an arithmetic progression with high entropy (larger than $2^N$)
    \item $|F|\,\min\{N^{\frac{1}{2}},\,M^{\frac{1}{2}}\}\,M^{\frac{1}{2}}$ if $F$ has the structure of a suitable Cantor set as described a bit later below  - see \eqref{defF} (this corresponds to the generalized arithmetic progression case).
    \end{itemize}
\end{itemize}

From the above remarks it becomes transparent that in order to find an example for which the RHS of \eqref{sharpZ1} is sharp one needs to search for a set $F$ with a Cantor set structure, that is for which $F$ has the structure of a generalized arithmetic progression. Recalling that we only discuss the case $M>N$, wlog we can assume that $M=2^s\,N$ for some large $s\in\N$.

We stop here briefly to introduce some more notations: assuming that $J\subseteq[0,1]$ dyadic interval we set $\J_s(J):=\{I\subseteq J\,|\,I\:\textrm{dyadic},\:|I|=|J|\,2^{-s}\}$. Also, if $\A=\bigcup\A_j$ is a collection of dyadic intervals then we set  $\A^{rt}:=\bigcup\A_j^{rt}$ where $\A_j^{rt}$ is the right child of $\A$.

With this, the construction of $F$ is as follows:
\beq\label{defF}
\eeq
\begin{itemize}
\item Stage $1$ - we let $\F_0:=J_s([0,1])$;

\item Stage $2$ - we define $\F_1:=\bigcup_{I\in \F_0^{rt}}J_s(I)$;

\item we continue inductively to construct $\F_2$, $\F_3$ and so on until we reach

Stage $N$ - define $\F_N:=\bigcup_{I\in \F_{N-1}^{rt}}J_s(I)$;

\item finally, we simply set
$$F:=\F_{N}\:.$$
\end{itemize}
Choosing now $\{n_j=2^j\}_{j=1}^{M}$ one can simply check now that relations \eqref{fnn0} - \eqref{LAC} become approximate equalities\footnote{Up to harmless constants the bound from the above is of the same nature with the bound from below.}.

This ends our proof.

\end{document}